\documentclass[]{article}

\usepackage[scale=.8]{geometry}
\usepackage{graphicx}
\usepackage{amssymb}
\usepackage{amsmath}
\usepackage{hyperref}
\usepackage{enumerate}
\usepackage{amsthm}
\usepackage{amsmath}
\usepackage{float}
\usepackage{mathtools}
\usepackage{lineno}
\usepackage{bbm}
\usepackage{textcomp}
\usepackage{esint}
\usepackage{cleveref}
\usepackage{xstring}
\usepackage{extarrows}
\usepackage{romannum}

\numberwithin{equation}{section}

\usepackage{xcolor}
\definecolor{lanse}{RGB}{0,0,255} 
\definecolor{zise}{RGB}{112,48,160} 
\definecolor{hongse}{RGB}{200,0,0} 
\usepackage{lineno}

\makeatletter
\renewenvironment{proof}[1][\proofname]{%
	\par\pushQED{\qed}\normalfont%
	\topsep6\p@\@plus6\p@\relax
	\trivlist\item[\hskip\labelsep\bfseries#1\@addpunct{.}]%
	\ignorespaces
}{%
	\popQED\endtrivlist\@endpefalse
}
\makeatother

\theoremstyle{thmstyleone}%
\newtheorem{theorem}{Theorem}
\theoremstyle{thmstyletwo}%
\newtheorem{remark}{Remark}%

\theoremstyle{thmstylethree}%
\newtheorem{definition}{Definition}%

\theoremstyle{thmstylefour}%
\newtheorem{lemma}{Lemma}

\theoremstyle{thmstylefive}%
\newtheorem{assumption}{Assumption}

\theoremstyle{thmstylesix}%

\makeatletter
\def\keywords{\xdef\@thefnmark{}\@footnotetext}
\makeatother

\usepackage{amsfonts}
\usepackage{amsthm}
\usepackage{amsmath}
\usepackage{amssymb}
\usepackage{mathrsfs}
\usepackage{comment}
\usepackage{hyperref}
\usepackage{graphicx}
\usepackage{epstopdf}
\usepackage{bm}
\usepackage{bbm}
\usepackage{cleveref}
\usepackage{xcolor}

\renewcommand{\d}{\mathrm{d}}

\newcommand{\R}{\mathbb{R}}

\newcommand{\N}{\mathbb{N}}
\newcommand{\E}{\mathbb{E}}

\newcommand{\T}{\mathcal{T}}
\renewcommand{\P}{\mathbb{P}}
\newcommand{\ind}[1]{\mathbbm{1}\left\{ #1\right\}}
\newcommand{\abs}[1]{\lfloor #1 \rfloor}

\title{On the maximal displacement of subcritical branching random walk in random environment}
\author{Wenxin Fu and Wenming Hong}

\begin{document}
	
	\pagenumbering{arabic}
	
	\maketitle
	
	\begin{abstract}
		In this paper, we consider the subcritical branching random walk in a random environment. We assume the branching and the step jump are independent; and the branching is in random envirenment, i.e., the particles in generation $n$ produce children according the probability measure $F_n\in \mathcal{P}\left(\N_0\right)$, and the $F_n$, $n=1,2,\cdots, $ are i.i.d  under the $P_E$. ``subcritical" means that	$
		a:=\E[X_1]\in (-\infty,0)$, where $X_1:=\log \overline{F}_1$ and $\overline{F}_1$ is the mean of $F_1$.
		
		Denote by $M$ the maximal displacement of the branching system. We find that the tail probability is heavily influnced by the log-Laplace transform of environment $
		\Lambda_e(\rho) = \log \E \left[ e^{\rho X} \right] <\infty.
		$ and the log-Laplace transform of step jump $h$, $\Lambda_s(\lambda):=\log \E_\mu\left[ e^{\lambda h}\right]$.

		For quenched probability estimation, we proved that under mild assumption, for $P_E$ almost surely environment $\xi$,
		\begin{equation*}
			\lim\limits_{n\rightarrow\infty}
			\frac{1}{x}\log P_\xi\left(M\geqslant x\right) = -\lambda_0,
		\end{equation*}
		where $\lambda_0$ is the unique solution of $\Lambda_s(\lambda_0)+\Lambda_e^\prime(0+)=0$. 
		
		For annealed probability estimation, let $\alpha:=\sup\left\{\rho:\Lambda_e(\rho)\leqslant 0\right\}$, $\varTheta(\rho):=\Lambda_e^\prime(\rho) + \Lambda_s(\lambda_\rho) -\lambda_\rho \Lambda_s^\prime(\lambda_\rho)$, in which $\lambda_\rho$ is the solution of equation $\Lambda_e(\rho)+\rho\Lambda_e(\lambda)=0$.
		Under some mild condition, we have the following three cases 
		
		\noindent (1) when  $\alpha>1$ and $\varTheta(1)<0$,  
		\begin{equation*}
			\lim\limits_{x\rightarrow\infty}e^{\lambda_1 x }\P(M\geqslant x ) = C_1;
		\end{equation*}
	    (2) when $\alpha>1$ and $\varTheta(1)=0$, 
	    \begin{equation*}
	    	\lim\limits_{x\rightarrow\infty}\sqrt{x}e^{\lambda_1 x }\P(M\geqslant x ) = C_2;
	    \end{equation*}
	    (3) when $\alpha>1$ and $\varTheta(1)>0$ or $\alpha\leqslant 1$, there exists a unique $\rho\in (0,1)$ such that $\varTheta(\rho)=0$, and some finite positive constant $C_3$ and $C_4$ such that
	    \begin{equation*}
	    	C_3 x^{-\frac{3}{2}} \leqslant e^{\rho\lambda_\rho x }\P(M\geqslant x ) \leqslant C_4.
	    \end{equation*}
	    
	   To deal with the challenges caused by the random environment, we formulated a ``position" indexed random work $R_x$ from the excursion of the spine. 
	    
	\end{abstract}
	
	\section{Introduction and Main Results}

	The maximal displacements of spatial branching systems have been investigated by many authors.
	For supercritical branching random walk ($m > 1$),
	it can be traced back to Hammersley \cite{Hammersley},
	Kingman \cite{Kingman}, Biggins \cite{Biggins1976} and Bramson \cite{Bramson}.
	Since then extensively studies on this topics have appeared in recent years, see for example
	\cite{Addario2009,Aidekon2013,Bachmann,Bramson2016,Bramson2009,Hu2009} 
	and references therein.
	
	For critical or subcritical cases, the system dies out eventually, 
	so the maximal displacement of the system is finite almost surely. 
	Thus, it is natural to consider the tail probability of the maximal displacement. 
	
	The asymptotic law for the maximal displacement of a critical branching Brownian motion can trace back to Sawyer and Fleischman \cite{Fleischman1979Maximun}.
	For the critical branching random walk, see \cite{Kesten1995,Lalley2015} and references therein.
	In \cite{Lalley2015}, Lalley and Shao introduced a discrete Feynman-Kac formula, and proved that under mild moment condition
	\begin{equation}\label{1}
		\lim\limits_{x\rightarrow\infty}x^2\P(M\geqslant x) = \frac{6\eta^2}{\sigma^2}
	\end{equation}
	where $\eta^2$ is the variance of jump distribution and $\sigma^2$ is the variance of offspring distribution. Moreover, they proved a conditional limit theorem, which shows
	\begin{equation}\label{2}
		\mathcal{L}\left(\left.
		\frac{M_n}{\sqrt{n}} \right|Z_n>0
		\right) \Longrightarrow \mathcal{L}(G),
	\end{equation}
	where $G$ is some random variable related to a super-Brownian motion.
	
	For subcritical cases, by using the discrete Feynman-Kac formula, in \cite{Neuman2017}, Neuman and Zheng obtain the asymptotic law of the maximal displacement. Recently, in \cite{Fu2025}, we reconsidered the subcritical branching random walk, and used the spine decomposition and optional line method refined the results. The results show that
	\begin{equation}
		\lim\limits_{x\rightarrow\infty}e^{\lambda x}\P(M\geqslant x) \quad
		\text{exists and belongs in} \quad (0,1].
	\end{equation}
	where $\lambda $ is the unique positive solution of the equation $\Lambda(\lambda)+\log m=0$, and $\Lambda(\lambda)$ is the log-Laplace exponent of jump distribution, and $m\in (0,1)$ is the mean of offspring distribution.
	
	\
	
	In this article, we are interested in a branching random walk in random environment (BRWre), i.e., a branching random walk in time-inhomogeneous environment, which has been introduced by Biggins and Kyprianou in \cite{Biggins2004}.
	For supercritical branching random walk in random environment, the results about extreme processes are obtained by Huang and Liu in \cite{Huang2014} and Mallein and Mi\l o\'s in \cite{Mallein2019}.
	
	In recent work \cite{Fu2026}, we investigated a kind of critical branching random walk in random environment (such system will die out eventually). We proved that
	conditional on survival event $\{Z_n>0\}$, 
	\begin{equation}\label{4}
		\mathcal{L}\left(\left.\frac{M_n}{n^{\frac{3}{4}}}\right|Z_n>0\right) \Longrightarrow \mathcal{L}\left(A_{\Lambda}\right),
	\end{equation}
	where $M_n$ is the maximal displacement at generation $n$, and $A_\Lambda$ is a random variable determined by some Brownian meander process. And based on above conditional limit theorem, we also proved that there exist some constants $C_1$ and $C_2$ such that
	\begin{equation}\label{5}
		0< C_1
		\leqslant \liminf\limits_{x\rightarrow\infty} x^{\frac{2}{3}}\P(M>x)
		\leqslant \limsup\limits_{x\rightarrow\infty} x^{\frac{2}{3}}\P(M>x) \leqslant C_2<\infty,
	\end{equation}
	where $M$ is the maximal displacement of the whole system. 
	Compared with the constant environment case (i.e. the \cref{1,2}), 
	it reveals that, the conditional limit speed for $M_n$ in random environment (i.e., $n^{3/4}$) is significantly greater than that of constant environment case (i.e., $n^{1/2}$), and so is the tail probability for $M$ (i.e., $x^{-2/3}$ vs $x^{-2}$).
	The phenomenon stems from the essential effect induced by the random environment. The random environment allows the branching system, conditional on survival, to have more particles survive, which in turn leads to heavier tail behavior for the maximum position.

	In this paper, we consider a kind subcritical branching random walk in random environment, and study the asymptotic behavior of the tail probability for the corresponding maximum displacement.
	
	\
	
	Following the Ulam-Harris notations for trees, we write
	\begin{equation*}
		\mathrm{U}^*= \bigcup\limits_{n\in \N} \mathbb{N}^n, \quad\text{and}\quad
		\mathrm{U} = \mathrm{U}^*\cup \emptyset.
	\end{equation*}
	the set of finite sequences of integers, with the convention $\mathbb{N}^0 = \emptyset$, where $\emptyset$ is the
	sequence of length 0, which encodes the root of the trees we consider.
	
	Let $\nu = (\nu_1,\nu_2,\cdots,\nu_n)\in \mathrm{U}$, then $\nu$ represents the $\nu_n^{th}$ child of the $\nu_{n-1}^{th}$ child
	of ... of the $\nu_1^{th}$ child of the initial individual $\emptyset$.
	We write $|\nu|$ the generation to
	which $\nu$ belongs, with convention $|\emptyset| = 0$. And for any $1\leqslant k\leqslant n$, we set $\nu(k) = (\nu_1,\nu_2,\cdots,\nu_k)$ the $k^{th}$ ancestor of $\nu$, specially $\nu(0)=\emptyset$.
	For $\varsigma,\nu \in \mathrm{U}$, we write $\varsigma \prec \nu$, if there exists $k\leqslant |\nu|$ such that $\varsigma = \nu(k)$, or in other words, $\varsigma$ is an ancestor of $\nu$.
	
	A plane rooted tree $\mathcal{T}$ is a subset of $\mathrm{U}$ which satisfies the following three properties:
	\begin{itemize}
		\item the root $\emptyset\in\mathcal{T}$. 
		\item if $\nu\in\mathcal{T}$ and $\mu\neq \emptyset$, then $\nu(|\nu|-1) \in \mathcal{T}$.
		\item if $\nu \in \mathcal{T}$, there exists an integer $ p(\nu)$, such that $ (\nu,k)\in \mathcal{T} $ for any $1\leqslant k\leqslant p(\nu)$; and if $p(\nu)=0$, then $\nu$ has no offspring in $\mathcal{T}$.
	\end{itemize}
	In fact, $p(\nu)$ is the number of children of $\nu$ in $\mathcal{T}$.
	A plane rooted marked tree is a pair $(\mathcal{T},V)$, where $\mathcal{T}$ is a plane rooted marked tree and $V: \mathcal{T}\longmapsto \R$ is a function from the nodes of tree $\mathcal{T}$ to $\R$.
	
	Let $\mathcal{P}\left(\N_0\right)$ 
	be the space of probability measures on 
	$\N_0:=\left\{0,1,\cdots\right\}$.
	Equipped with the metric of total variation,
	$\mathcal{P}\left(\N_0\right)$ becomes a Polish space. 
	For any probability measure $F\in \mathcal{P}\left(\N_0\right)$, we also use $F$ to denote the generating function of this probability measure on $\N$, the mean value and normalized second factorial moment of $F$ is denoted as
	\begin{equation}\label{mean and stand-var}
		\overline{F}:= \sum_{k=0}^{\infty} k F[k],
		\quad\tilde{F}:=\dfrac{1}{\overline{F}^2} \sum_{k=1}^{\infty}k(k-1)F[k].
	\end{equation}
	Also, denote by $ \varkappa(F,a) $ 
	the standardized truncated second moment of 
	the probability measure $F\in \mathcal{P}\left(\N_0\right)$,
	\begin{equation}\label{6}
		\varkappa(F,a):= \dfrac{1}{\overline{F}^2} \sum_{y=a}^{\infty} y^2F[y].
	\end{equation}
    And for probability measure $F \in \mathcal{P}\left(\N_0\right)$ such that $\overline{F}<\infty$, we denote by $F^S \in \mathcal{P}\left(\N_0\right)$ the size-biased probability measure of $F$. And in fact, the generating function of $F^S$ is given by
    \begin{equation}\label{size-biased distribution}
    	F^S(s) = \sum_{k=1}^{\infty} \frac{kF[k]}{\sum_{j=1}^{\infty} jF[j]} s^k = s\frac{F^{\prime}(s)}{F^{\prime}(1)}, \quad \forall s\in [0,1] .
    \end{equation}
    
	Let $F$ be a random variable 
	taking values in $\mathcal{P}\left(N_0\right)$. A two-side infinite sequence $\xi:= \left\{\cdots, F_{-1},F_{0},F_1,\right.$ $\left.F_2,\cdots\right\}\in \mathcal{P}\left(\N_0\right)^{\mathbb{Z}} $
	of i.i.d. copies of $F$ is said to form a random environment, and we use $P_E$ denote the corresponding probability of the environment.
	For any $k \in \mathbb{Z}$, we define
	\begin{equation}\label{quenched random walk}
		X_k:=\log \overline{F}_k,\quad \eta_k:=\tilde{F}_k,
		\quad S_0=0,\quad \text{and} \quad S_n=S_{n-1}+X_n.
	\end{equation}
	Define by $ \mathcal{F} = \sigma(\xi) $ the $\sigma$-field generated by the two-side environment $\xi$.

	For any $k\in \mathbb{Z}$, denote by $ T_k $ the shift operator $T_k:\mathcal{P}\left(\N_0\right)^{\mathbb{Z}} \longmapsto \mathcal{P}\left(\N_0\right)^{\mathbb{Z}} $ which shifts the environment $\xi$ by the vector $k$, i.e.
	\begin{equation}\label{shift operator}
		\forall l\in \mathbb{Z}, \quad (T_k\xi)(l) = \xi(k+l) = F_{k+l}.
	\end{equation}
	And denote by $\Gamma_k $ the truncation operator $\Gamma_k:\mathcal{P}\left(\N_0\right)^{\mathbb{Z}} \longmapsto \mathcal{P}\left(\N_0\right)^{\mathbb{N}} $, such that
	\begin{equation}\label{cut off operator}
		\Gamma_k\xi = 
		\left\{
		\xi(k+1),\xi(k+2),\cdots
		\right\}
		=
		\left\{F_{k+1},F_{k+2},\cdots
		\right\} \in \mathcal{P}\left(\N_0\right)^{\mathbb{N}}.
	\end{equation}
	
	Given an environment $\xi\in \mathcal{P}\left(\N_0\right)^{\mathbb{Z}}$ (or conditioned on environment $\xi$), we consider a special time-inhomogeneous branching random walk in environment $\xi$, which is a plane rooted marked tree $(\mathcal{T},V)$ and can be mathematical description as follows:
	\begin{itemize}
		\item It starts at root $\emptyset$ located at original, i.e. $V(\emptyset) = 0$.
		\item For any $k\geqslant1$, given a family $\left\{p(\nu):~\nu\in\mathrm{U},~|\nu|=k-1\right\}$ of i.i.d random variables with the distribution given by $F_k$.
		\item For different $k$, these families are independent with each other.
		\item The plane rooted tree $\mathcal{T}$ is given by $ \mathcal{T} = \left\{
		\nu\in\mathrm{U}:~\text{for all}~k\leqslant |\nu|, ~\nu_k\leqslant
		p(\nu(k-1)) \right\}.$
		\item For any $\nu\neq\emptyset\in \mathcal{T}$, the movement of $\nu$ respect to its parent is given by ``$V(\nu)-V(\nu(|\nu|-1))$". Conditioned on genealogy tree $\mathcal{T}$, all of these movements are i.i.d random variables with distribution given by $\mu(\d x)$.
	\end{itemize}
	Indeed, this branching system can be constructed as follow.
	We consider a family of independent random elements
	$\{ (h_{\nu}^1,h_{\nu}^2,\cdot,h_{\nu}^{\rho(\nu)}):\nu\in\mathrm{U} \}$, where $p(\nu)$ has the law of $F_{|\nu|+1}$, and conditioned on $p(\nu)$, $\{h_{\nu}^i:1\leqslant i\leqslant p(\nu)\}$ are i.i.d random variables with distribution given by $\mu$. 
	And then, the plane rooted tree which represents the genealogy of the population is given by $ \mathcal{T} = \left\{
	\nu\in\mathrm{U}:~\text{for all}~k\leqslant |\nu|, ~\nu(k)\leqslant\right.$ $\left.
	p(\nu(k-1)) \right\}$. We set $V(\emptyset) = 0$, and for any $\nu=(\nu_1,\nu_2,\cdots,\nu_{k})\in\mathcal{T}$ with $|\nu|=k$, 
	\begin{equation}
		V(\nu) := V(\nu(k-1))+ h_{\nu(k-1)}^{\nu_k} = \sum_{s=0}^{k-1} h_{\nu(s)}^{\nu_{s+1}}.
	\end{equation}
	
	The pair $\left(\T,V\right)$ is called the branching random walk 
	in the time-inhomogeneous environment $\xi$.
	For each $n\geqslant0$, define $Z_n:= \# \{u\in \T,|u|=n\}$ the number of particles survived in generation $n$.
	Conditioned on environment $\xi$, we use $P_\xi$ to denote the law of the branching random walk $\left(\T,V\right)$ in environment $\xi$, which is also called the quenched probability.
	
	\begin{remark}
		According to above construction, the quenched distribution of $\left(\T,V\right)$ only depend on the environment on positive half-axis, i.e. $\xi^+ :=\left(F_1,F_2,\cdots\right)$. However, to obtain a structure consistent with the subsequent parts, we directly introduce the two-sided environment.
	\end{remark}
	
	The quenched law $P_\xi$ can be regarded as the probability kernel. Therefore, combining with $P_E$ and $P_\xi$, 
	there exists a probability measure $\P(\d x, \d \xi)=P_\xi(dx) P_E(\d \xi)$ such that
	\begin{equation}\label{annealed prob}
	\iint f(x,\xi) \P(\d x, \d \xi) = \iint f(x,\xi) P_\xi(\d x)P_E(\d \xi)
	\end{equation}
	for all positive and measurable real-valued function $f$.
	The joint probability of the environment $\xi$ and branching system $(\mathcal{T},V)$ is just given by the probability measure $ \P(\d x, \d \xi)=P_\xi(dx) P_E(\d \xi)$, which is called the annealed probability. Abuse use of notations, we also use $P_E\otimes P_\xi$ to represent the annealed probability.

	Under annealed probability $\P$, $\{X_k;k\geqslant1\}$ is 
	a sequence of $i.i.d$ copies of logarithmic of the mean of the offspring number $X:=\log\overline{F} $,
	and the sequence $ S:=\left(S_0,S_1,\cdots\right) $ 
	is called the associated random walk.
	
	\begin{assumption}\label{subcritical}
		We assume that
		\begin{equation}
			a:=\E[X_1]\in (-\infty,0).
		\end{equation}
		and for any $\rho\geqslant0$, we have
		\begin{equation}
			\Lambda_e(\rho) = \log \E \left[ e^{\rho X} \right] <\infty.
		\end{equation}
	And define \begin{equation}\label{def of alpha}
	 \alpha:=\sup\left\{\rho:\Lambda_e(\rho)\leqslant 0\right\}.
	\end{equation}
	\end{assumption}
	
	Under Assumption \ref{subcritical}, we know $\Lambda_e^\prime(0)<0$, and the branching system is subcritical, thus will die out eventually. Denote by
	\begin{equation}
		M_n:= \max\{V(\nu):\nu\in \mathcal{T}, |\nu|=n\}, \quad \text{and}
		\quad M:= \sup\limits_{n\geqslant 0} M_n.
	\end{equation}
	the maximal displacement of the system at generation $n$ or the whole maximal displacement of the system. Therefore, $M$ and $M_n$ are finite almost surely.
	
	\begin{remark}\label{R1}
		For subcritical branching processes in random environment, the extinction probability exhibits a phase transition depending on the sign of $\Lambda_e^\prime(1)$. Thus the subcritical regimes can be divided into three cases: strongly subcritical ($\Lambda_e^\prime(1)<0$), intermediately  subcritical ($\Lambda_e^\prime(1)=0$) and weakly subcritical ($\Lambda_e^\prime(1)>0$). We refer readers to \cite{B2} for more discussions.
	\end{remark}
	
	Besides the branching environment, the jump distribution $\mu(\d x)$ also plays an important role in the spatial branching system. And for the jump distribution $\mu(\d x)$, we need following assumption.
	\begin{assumption}\label{Jump Assumption}
		The jump distribution $\mu(\d x)$ is  supported on $\mathbb{Z}$, non-trivial, satisfies $\mu([2,\infty)) = 0$ with mean $0$. Thus, for any $\lambda>0$, denote by
		\begin{equation}
			\Lambda_s(\lambda):=\log \E_\mu\left[ e^{\lambda h}\right]<\infty,
		\end{equation}
	where under $\E_\mu$, $h$ is a random variable with distribution given by $\mu$. 
	\end{assumption}
	
	Under Assumption \ref{Jump Assumption}, we know $\Lambda_s^\prime(0)=0$, for any $\delta>0$, the equation $\Lambda_s(\lambda) =\delta $ has a unique solution in $(0,\infty)$. And we denote by $\kappa(\delta)$the unique solution of the equation.
	
	\begin{remark}\label{R2}
		Under \Cref{Jump Assumption}, the random walk $\left(H_0=0,H_1,\cdots\right)$ with one step distribution $\mu(\d x)$ doesn't experience the overshoot, which implies that for any positive integer $x$, we have $H_{\tau_x}=x$, where $\tau_x:= \inf\{k\geqslant 0:H_k\geqslant x\}$. Without overshoot, it is helpful for us to use the coupling method to investigate some random variables. For general random walks, the approach proposed in this paper is still applicable in principle. The only remaining issue to address is the treatment of the overshoot. Referring to our previous work \cite{Fu2025}, we may extend the relevant results by virtue of the property that the overshoot of a supercritical random walk converges in distribution.
	\end{remark}
	
	For quenched probability estimates, as time tends to infinity, the quenched tail probability decay of the maximal displacement $M$ is determined by the mean $a:=\E[X_1]\in (-\infty,0)$ (defined in \cref{subcritical}) of environment and the jump distribution.
	\begin{theorem}\label{Quenched theorem}(Quenched probability decay)
		Under \Cref{subcritical} and \Cref{Jump Assumption}, with addition technical assumption
		\begin{equation}\label{addition assumption for quenchde probability}
			\E\left[ \eta_1 \right] <\infty.
		\end{equation}
	    where $\eta$ is defined in \cref{quenched random walk}.
		Then, for $P_E$ almost surely environment $\xi$, we have
		\begin{equation}
			\lim\limits_{n\rightarrow\infty}
			\frac{1}{x}\log P_\xi\left(M\geqslant x\right) = -\lambda_0,
		\end{equation}
		where $\lambda_0:= \kappa(-a)$, $\kappa(\cdot)$ is the inverse function of $\Lambda_s(\lambda)$, and $a = \E[X_1]$ is the constant defined in \Cref{subcritical}.
	\end{theorem}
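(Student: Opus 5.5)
Two bounds give the limit: a first-moment (many-to-one) upper bound, and a lower bound from a second-moment (Paley--Zygmund) estimate on one carefully chosen path. Throughout, $\lambda_0=\kappa(-a)$ solves $\Lambda_s(\lambda_0)+a=0$. This is the same $\lambda_0$ as in the abstract, since $\Lambda_e'(0+)=a$.

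\emph{Upper bound.} By Remark~\ref{R2} there is no overshoot. Hence $\{M\geqslant x\}$ means that some particle $\nu$ is the first on its ancestral line to reach level $x$, and then $V(\nu)=x$ exactly. By a union bound over generations and the many-to-one formula in environment $\xi$,
\begin{equation*}
P_\xi(M\geqslant x)\leqslant \sum_{n\geqslant 0} e^{S_n}\, P_\mu(\tau_x=n).
\end{equation*}
Here $e^{S_n}=\prod_{k\leqslant n}\overline{F}_k$ is the quenched mean of $Z_n$, and $\tau_x$ is the hitting time of $x$ for the walk $H$ with steps $\mu$.

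The first step is a Chernoff bound. Take $\lambda>\lambda_0$, so $\Lambda_s(\lambda)>-a$, and use the exponential martingale $e^{\lambda H_n-n\Lambda_s(\lambda)}$ stopped at $\tau_x$. This gives $P_\mu(\tau_x=n)\leqslant e^{-\lambda x+n\Lambda_s(\lambda)}$.

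Next fix $\varepsilon>0$ with $\Lambda_s(\lambda)+a+\varepsilon<0$, which is possible because $\Lambda_s$ is increasing on $(0,\infty)$ and we may take $\lambda$ close to $\lambda_0$. By the strong law, $S_n\leqslant C_\xi+(a+\varepsilon)n$ for $P_E$-a.e.\ $\xi$. The series is then bounded by $C'_\xi e^{-\lambda x}$, so $\limsup_x \frac1x\log P_\xi(M\geqslant x)\leqslant-\lambda$. Letting $\lambda\downarrow\lambda_0$ gives the upper bound.

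\emph{Lower bound.} Fix $\lambda<\lambda_0$ close to $\lambda_0$. Set $v=\Lambda_s'(\lambda)$ and $n=\lfloor x/v\rfloor$; this is the optimal Cramér time, since $\lambda_0$ solves the Legendre optimisation of $\Lambda_s(\lambda)+a$. Let $Y$ be the number of particles at generation $n$ whose path stays in a tube around the tilted line $k\mapsto vk$ and ends above $x$. Then $P_\xi(M\geqslant x)\geqslant P_\xi(Y>0)\geqslant (E_\xi Y)^2/E_\xi Y^2$.

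For the first moment, many-to-one gives $E_\xi Y=e^{S_n}P_\mu(\text{tube event})$. By Cramér's theorem for the tilted walk, $P_\mu(\text{tube event})=e^{-n(\lambda v-\Lambda_s(\lambda))+o(n)}$. Since $S_n=an+o(n)$ almost surely, we get $\frac1x\log E_\xi Y\to -\lambda+(\Lambda_s(\lambda)+a)/v$. As $\lambda\to\lambda_0$ this tends to $-\lambda_0$.

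For the second moment, the many-to-two formula splits $E_\xi Y^2$ by the generation $k$ of the branching point:
\begin{equation*}
E_\xi Y^2\leqslant E_\xi Y+\sum_{k<n} e^{S_k}\,\eta_{k+1}\, e^{2(S_n-S_{k+1})}\,\overline{F}_{k+1}^{\,2}\,(\text{path factors}).
\end{equation*}
The path factors are bounded using the tube constraint, in the standard way for tilted branching random walks. The ratio $E_\xi Y^2/(E_\xi Y)^2$ then reduces to a sum over $k$ of $\eta_{k+1}e^{-(S_{k+1}-S_0)}\cdot e^{(\Lambda_s(\lambda)-\lambda v)\cdot(\ldots)}$-type terms. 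The goal is to show that this sum is at most $e^{o(x)}$ almost surely.

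This step is the main obstacle. The difficulty is that the environment term $-S_k$ grows like $|a|k$, which must be compensated by the spatial cost of the path. Within the tube, the subpath after the branching point is penalised at rate $\lambda v-\Lambda_s(\lambda)$. Along the tilted line, the remaining "budget" at time $k$ is exactly balanced when $\lambda=\lambda_0$ and strictly positive for $\lambda<\lambda_0$.

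The assumption $\E[\eta_1]<\infty$ enters here. By Borel--Cantelli, $\eta_k\leqslant k^2$ eventually almost surely, so the $\eta_{k+1}$ factors contribute only $e^{o(x)}$. The fluctuations of $S_k$ around $ak$ are also $o(n)$ uniformly in $k\leqslant n$ almost surely.

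If the naive sum turns out to fail, because the early branching points dominate, my first fallback is to truncate. I would restrict $Y$ to particles whose ancestors at every generation $k$ have at most $e^{\varepsilon n}$ relatives in the tube, which controls those branching points. This yields $\liminf_x\frac1x\log P_\xi(M\geqslant x)\geqslant -\lambda_0-O(\varepsilon)$, and the lower bound follows by letting $\varepsilon\to0$.
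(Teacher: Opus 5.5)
Your route (a many-to-one upper bound, and Paley--Zygmund on a fixed-generation tube count for the lower bound) is viable and genuinely different from the paper's. However, both halves contain an error, and the key step of the lower bound aims at a target that is false.

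\textbf{Upper bound.} The direction of the limit is reversed. The series $\sum_n e^{S_n+n\Lambda_s(\lambda)}$ converges only if $\Lambda_s(\lambda)+a<0$, i.e.\ $\lambda<\lambda_0$. For $\lambda>\lambda_0$ one has $\Lambda_s(\lambda)+a>0$, so no $\varepsilon>0$ makes $\Lambda_s(\lambda)+a+\varepsilon<0$. Take $\lambda\in(0,\lambda_0)$, obtain $\limsup_x\frac1x\log P_\xi(M\geqslant x)\leqslant-\lambda$, and let $\lambda\uparrow\lambda_0$. With that change this half is correct.

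\textbf{Lower bound.} Since $Y$ is integer-valued, $E_\xi Y^2\geqslant E_\xi Y$, and hence $E_\xi Y^2/(E_\xi Y)^2\geqslant 1/E_\xi Y=e^{\lambda_0x+o(x)}$. So the ratio can never be $e^{o(x)}$; if it were, you would get $P_\xi(M\geqslant x)\geqslant e^{-o(x)}$, contradicting your own upper bound. The correct target is $E_\xi Y^2\leqslant e^{o(x)}E_\xi Y$.

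Your account of the obstacle also has the sign wrong. Set $I:=\lambda v-\Lambda_s(\lambda)>0$. Relative to $(E_\xi Y)^2$, the generation-$k$ pair term is of order $\eta_{k+1}e^{-S_k+kI}$. The shared segment is paid once instead of twice, so the spatial factor is a gain, not a penalty, and nothing compensates $e^{|a|k}$. This is harmless only because it is compared with $1/E_\xi Y\approx e^{(|a|+I)n}$.

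Here is the computation that works. Take the tube $\{|H_j-vj|\leqslant w,\ j\leqslant n,\ H_n\geqslant x\}$ with $n=\lceil x/v\rceil$ and $\sqrt n\ll w\ll n$. Tilting by $Q_\lambda$ gives $E_\xi Y\geqslant c\,e^{S_n-nI-\lambda w}$. Bound each continuation probability from height $y$ at time $k$ by $e^{-\lambda(x-y)+(n-k)\Lambda_s(\lambda)}$, and use $E_\mu[e^{2\lambda H_k};\text{tube}]\leqslant e^{\lambda(vk+w)+k\Lambda_s(\lambda)}$. This yields
\begin{equation*}
\frac{E_\xi Y^2}{E_\xi Y}\leqslant 1+c^{-1}e^{2\lambda(w+v)}\sum_{k=0}^{n-1}\eta_{k+1}\,e^{(S_n-S_k)-(n-k)I}.
\end{equation*}
Since $S_n-S_k=a(n-k)+o(n)$ uniformly in $k\leqslant n$, the exponent is at most $(a-I)(n-k)+o(n)$. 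Early branchings are exponentially suppressed and late ones dominate; with $\eta_k\leqslant k^2$ eventually, the sum is $e^{o(n)}$.

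This holds for every $\lambda>0$, so you can take $\lambda=\lambda_0$ directly, giving $S_n-nI=-\lambda_0x+o(x)$. Then $P_\xi(M\geqslant x)\geqslant P_\xi(Y>0)\geqslant E_\xi Y\,e^{-o(x)}=e^{-\lambda_0x+o(x)}$. Neither $\lambda<\lambda_0$ nor your truncation fallback is needed.

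\textbf{Comparison with the paper.} The paper never counts particles at a fixed generation. With $\lambda=\lambda_0$, it writes $P_\xi(M\geqslant x)=\tilde P_\xi[e^{R_x}B_x]$ under the size-biased spine measure attached to the first-passage line $\mathcal L_x$. The upper bound is then immediate from $B_x\leqslant 1$ and $|S_n-na|\leqslant n^{2/3}$. The lower bound comes from Cauchy--Schwarz against the harmonic moment $\tilde P_\xi[B_x^{-1}]$, whose conditional mean is explicit through the side-subtree means $\eta_{k+1}$; this is the spine form of your second-moment bound. The fact that the spine stays below $x$ before $\tau_x$, i.e.\ $e^{\lambda(H_k-x)}\leqslant 1$, plays the role of your tube's upper wall. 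Your version needs only many-to-one/two and Cram\'er, while the paper's reuses the spine machinery that drives its annealed results.
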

	
	For annealed probability estimates, similar to the extinction probability (see \Cref{R1}), the tail probability also undergoes three different phases. However, the corresponding phase transition points are not consistent. In fact, the phase transition depends on both environment and movement.
	
	By using many-to-one method, optional line and an essential coupling argument, by choosing a suitable parameter $\lambda>0$, we related the annealed probability decay to a random walk $\{R_x,x\geqslant 0\}$ under some probability measure $ P_E\otimes P_{\overline{Y}} $, such that
	\begin{equation}\label{18}
		R_x :=  \sum_{i=-T_x+1}^{0} X_i + T_x\Lambda_s(\lambda)-\lambda x,
	\end{equation}
	where $\{T_x;x\geqslant0\}$ is a sequence of random times with drift $\left(\Lambda_s^\prime(\lambda)\right)^{-1}$ and independent with $\{X_i;i\in \mathbb{Z}\}$, and $\lambda$ is some constant to be determined, see Section 2 for rigorously 
	definitions.
	And by the \Cref{Couping Lemma} in Section 2, we have 
	\begin{equation}
		\P(M\geqslant x) = P_E\otimes P_{\overline{Y}} \left(e^{R_x}B_x\right)
	\end{equation}
	where $B_x$ is some random variable defined in \cref{def of overlineA}.
	
	\begin{remark}
		For the subcritical branching random walk in constant environment, the $ X_i $ in \cref{18} is equal to $ \log m$ (where $m\in (0,1)$ is the mean of offspring distribution), by choosing $\lambda$ such that $\Lambda_s(\lambda)+\log m = 0$ (see \cite{Fu2025} for detail discussions), the random walk $\{R_x, x\geqslant 0\}$ reduces to a trivial path.
		And for random environment models, the random walk $\{R_x,x\geqslant 0\}$ is nontrivial, which leads to the key challenges posed by the random environment models.
	\end{remark}

	According to above equality, if $\alpha>1$ (where $\alpha$ is  defined in \cref{def of alpha}), by choosing $\lambda= \lambda_1$ (i.e. the solution of $\Lambda_e(1)+\Lambda_s(\lambda)=0$), the $\{e^{R_x + \lambda_1 x};x\geqslant 0\}$ is the exponent martingale with parameter $\rho=1$. Via the standard measure change of exponential martingale, we can construct a probability measure $Q_E\otimes P_{\overline{Y}}$ (defined in Section 3), such that \begin{equation} e^{\lambda_1 x}P_E\otimes P_{\overline{Y}} \left(e^{R_x}B_x\right) = Q_E\otimes P_{\overline{Y}}(B_x). \end{equation}
	It should be noted that the behavior of the sequence of random variables $\{B_x,x\geqslant 0\}$ depends strongly on the properties of the random walk $\{R_x,x\geqslant 0\}$ under $Q_E\otimes P_{\overline{Y}}$, especially the drift of the random walk. By simple calculation, the mean of random walk $\{R_x,x\geqslant 0\}$ under $Q_E\otimes P_{\overline{Y}}$ is given by $\frac{\varTheta(1)}{\Lambda_s^{\prime}(\lambda_1)}$ (also see \cref{mean of R}), and the sign of the drift is equal to the sign of $\varTheta(1)$, where
	\begin{equation}
		\varTheta(1):=\Lambda_e^\prime(1)+\Lambda_s(\lambda_1)-\lambda_1\Lambda_s^\prime(\lambda_1)
	\end{equation}
	
	Under $\alpha>1$, if $\varTheta(1)<0$, then the random walk $\{R_x,x\geqslant0\}$ under $Q_E\otimes P_{\overline{Y}}$ has negative drift, which leads to the sequence of random variables $\{B_x,x\geqslant0\}$ converge to some positive random variable $B_\infty$ \cref{Lemma of Strongly}, which lead to that the decay of $\P(M\geqslant x)$ is exactly given by $e^{-\lambda_1x}$.
	For such cases, the following theorem describe the decay of annealed tail probability of maximal displacement $M$.
	\begin{theorem}\label{Theorem of strongly cases}
		Under \Cref{subcritical} and \Cref{Jump Assumption}, assume $\alpha>1$ and $\varTheta(1)<0$, which implies that $\Lambda_e(1)<0$ and 
		\begin{equation}\label{19}
			\varTheta(1) = \Lambda_e^\prime(1)+\Lambda_s(\lambda_1)-\lambda_1\Lambda_s^\prime(\lambda_1) <0,
		\end{equation}
		where $\lambda_1$ satisfies $ \Lambda_e(1)+\Lambda_s(\lambda_1) =0 $.
		In addition, we impose the following integrability condition
		\begin{equation}\label{addition assumption for strongly cases}
			\E\left( e^{X_1-\Lambda_e(1)} \log_+ \eta_1\right)<\infty,
		\end{equation}
		where $\log_+ x = \log (\max\{x,1\})$.
		Then there exists some positive constant $C_1 \in (0,1]$, such that
		\begin{equation}
			\lim\limits_{x\rightarrow\infty}e^{\lambda_1 x }\P(M\geqslant x ) = C_1.
		\end{equation}
	\end{theorem}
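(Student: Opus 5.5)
The proof follows the route sketched in the introduction: many-to-one with an optional line, a coupling along the spine, and an exponential change of measure.

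\textbf{Step 1: reduction to the spine.} By the Coupling Lemma (\Cref{Couping Lemma}) with the choice $\lambda=\lambda_1$, we have
\begin{equation*}
\P(M\geqslant x)=P_E\otimes P_{\overline{Y}}\left(e^{R_x}B_x\right),
\end{equation*}
where $R_x=\sum_{i=-T_x+1}^{0}X_i+T_x\Lambda_s(\lambda_1)-\lambda_1 x$. Since $\alpha>1$, we have $\Lambda_e(1)<0$ and $\lambda_1=\kappa(-\Lambda_e(1))$ is well defined. The process $e^{R_x+\lambda_1 x}=\prod_{i=-T_x+1}^{0}e^{X_i-\Lambda_e(1)}$ is then a mean-one martingale in the position parameter, because $T_x$ is independent of the $X_i$. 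Tilting each $X_i$ by $e^{X_i-\Lambda_e(1)}$ defines $Q_E$ as in Section 3 and gives
\begin{equation*}
e^{\lambda_1x}\P(M\geqslant x)=Q_E\otimes P_{\overline{Y}}(B_x).
\end{equation*}
Under $Q_E$ the $X_i$ remain i.i.d.\ with mean $\Lambda_e'(1)$. Hence $R_x$ has drift $\varTheta(1)/\Lambda_s'(\lambda_1)<0$ under $Q_E\otimes P_{\overline{Y}}$, by \cref{mean of R}. The theorem therefore reduces to showing that $B_x$ converges in law (or a.s.\ along the coupling) to some $B_\infty$, with convergence of expectations, and that $\E B_\infty>0$. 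The bound $C_1\leqslant1$ holds because, as I recall from \cref{def of overlineA}, $B_x$ is a conditional probability, or the inverse of an expected number of particles hitting level $x$, so it is bounded by $1$.

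\textbf{Step 2: structure of $B_x$.} I expect $B_x$ to be a quenched ratio of the form $1/\bigl(1+\sum_{k}(\text{contributions of subtrees hanging off the spine at times }-T_x+1,\dots,0)\bigr)$. This is the standard lower and upper bound for $P(Z>0)$ via the size-biased spine, $P(Z>0)=E[Z]\,\hat E[1/Z]$. Each subtree rooted at a spine ancestor $k$ steps before the hitting time contributes roughly $\eta_{-k}e^{R\text{-increment}}$. This is a discounted random series $\sum_k \eta_{-k}\exp(R_x-R_{x_k})$ indexed backwards from the hitting point. Reading it backwards from position $x$ is the reason for the two-sided environment and the shifts $T_k$: the law of the environment seen from the endpoint is stationary.

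\textbf{Step 3: convergence (main obstacle).} We need the backward series to converge almost surely under $Q_E$ uniformly in $x$, together with a stationarity or monotone-coupling argument giving $B_x\to B_\infty$. The negative drift of $R$ under $Q_E$ gives geometric decay of the factors $e^{R_x-R_{x_k}}$, which grow linearly in the backward index. By a Borel--Cantelli argument, the condition $\E_{Q}[\log_+\eta_1]=\E(e^{X_1-\Lambda_e(1)}\log_+\eta_1)<\infty$ (i.e.\ \cref{addition assumption for strongly cases}) shows that $\eta_{-k}$ grows subexponentially. Hence the series is finite a.s., which is exactly the classical Kesten--Goldie/Vervaat perpetuity condition. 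Since the environment seen backwards from the hitting point has a law independent of $x$, and $T_x$ is a renewal-type process whose lattice/no-overshoot property follows from \Cref{R2}, I would couple all $x$ on one probability space. The finite sums then increase to the full backward series, and $B_x\to B_\infty\in(0,1]$ a.s. Bounded convergence yields $Q_E\otimes P_{\overline{Y}}(B_x)\to C_1:=Q_E\otimes P_{\overline{Y}}(B_\infty)\in(0,1]$.

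The delicate points are handling the optional-line correction terms in $B_x$, which must vanish or converge, and verifying that the lower bound of the ratio indeed stays positive. The latter is again guaranteed by the a.s.\ finiteness of the perpetuity.
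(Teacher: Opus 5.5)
Your proposal is correct and follows essentially the same route as the paper. Both arguments use the Coupling Lemma with $\lambda=\lambda_1$ and the exponential tilt to $Q_E$, under which $R_x$ has drift $\varTheta(1)/\Lambda_s^\prime(\lambda_1)<0$; a.s. finiteness of the backward series $\sum_{k\leqslant -1}\eta_{k+1}e^{J_k}$ from the law of large numbers and Borel--Cantelli applied to $\log_+\eta_k$; monotone convergence $B_x\downarrow B_\infty\in(0,1]$ along the Tanaka coupling; and finally bounded convergence.

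The one step you leave implicit is the passage from the $\eta$-series to the actual random series $\sum_k e^{J_k}\Phi(\mathcal{T}_k,\overline{Y}_k,0)$. The paper gets this because the conditional mean of the random series given $(\xi,\overline{\textbf{Y}})$ is exactly the $\eta$-series (\Cref{mean of Phi}), which yields $B_\infty>0$. Also, the ``optional-line correction terms'' you worry about do not arise, since $\Phi(\mathcal{T}_k,\overline{Y}_k,0)$ does not depend on $x$.
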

	
	\begin{remark}
		The subcritical branching random walk in a constant environment can be regarded as a special case of $\alpha>1$ and $\varTheta(1)>0$ cases subcritical branching random walk in random environment (i.e. the \cref{19} is always satisfied for $\Lambda_e(\lambda)=\lambda \log m$).
		From this perspective, \Cref{Theorem of strongly cases} also indicates the corresponding result for branching random walks in constant environment.
	\end{remark}
	
	Under $\alpha>1$, if $\varTheta(1)=0$, the random walk $\{R_x,x\geqslant 0\}$ under $Q_E\otimes P_{\overline{Y}}$ is recurrent. For such cases, besides the exponential decay $e^{-\lambda_1 x}$, there is also an additional polynomial decay.
	\begin{theorem}\label{Theorem of intermediately cases}
		Under \Cref{subcritical} and \Cref{Jump Assumption}, assume $\alpha>1$ and
		$\varTheta(1)=0$, which implies that $\Lambda_e(1)<0$ and 
		\begin{equation}\label{22}
			\varTheta(1)=\Lambda_e^\prime(1)+\Lambda_s(\lambda_1)-\lambda_1\Lambda_s^\prime(\lambda_1) =0.
		\end{equation}
		where $\lambda_1$ satisfies $ \Lambda_e(1)+\Lambda_s(\lambda_1) =0 $.
		In addition, we assume that there exists some $\delta>0$ such that
		\begin{equation}\label{additional assumption for intermediately}
			\E\left(e^{X_1-\Lambda_e(1)} (\log_+ \eta_1)^{2+\delta}\right)<\infty.
		\end{equation}
		Then there exists some finite and positive constant $C_2$, such that
		\begin{equation}
			\lim\limits_{x\rightarrow\infty}\sqrt{x}e^{\lambda_1 x }\P(M\geqslant x ) = C_2.
		\end{equation}
	\end{theorem}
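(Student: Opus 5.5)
We plan to start from the coupling identity of \Cref{Couping Lemma}, $\P(M\geqslant x)=P_E\otimes P_{\overline{Y}}(e^{R_x}B_x)$, taken with $\lambda=\lambda_1$. Since $\alpha>1$ gives $\Lambda_e(1)<0$, the process $e^{R_x+\lambda_1 x}$ is the exponential martingale with parameter $\rho=1$. Changing measure to $Q_E\otimes P_{\overline{Y}}$ then reduces the statement to
\begin{equation*}
\lim_{x\to\infty}\sqrt{x}\,Q_E\otimes P_{\overline{Y}}(B_x)=C_2 .
\end{equation*}
Under $Q_E$ the $X_i$ are i.i.d.\ with law tilted by $e^{X-\Lambda_e(1)}$. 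By \cref{22}, $R_x$ has zero drift $\varTheta(1)/\Lambda_s'(\lambda_1)$, and it has finite variance because all exponential moments of $X$ and of $h$ (bounded above) are finite. So $R$ is a centered random walk indexed by position, as in the intermediately subcritical BPRE (where the extinction probability decays like $n^{-1/2}$). We expect $B_x$ to behave like a quenched ``survival'' functional of the environment seen along the spine: roughly the inverse of $1+\sum_{y\leqslant x} e^{R_y-R_x}\,\eta(\cdot)$, or equivalently a function of $-\min_{y\leqslant x}R_y$ relative to $R_x$. The mechanism is that $B_x$ is of order one only when $R_x$ stays near its running minimum, i.e.\ when the reversed walk stays positive.

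The key steps, in order, are as follows.

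\textbf{Step 1: bounds for $B_x$.} Using the analogue of the standard Geiger–Kersting linear-fractional bounds on generating functions, which \Cref{Lemma of Strongly} presumably already provides in the strongly subcritical case, we sandwich $B_x$ as
\begin{equation*}
c\Big(\sum_{y\leqslant x}e^{-(R_x-R_y)}(1+\eta_{(y)})\Big)^{-1}\leqslant B_x\leqslant \min_{y\leqslant x}e^{-(R_x-R_y)}\wedge 1 .
\end{equation*}
This gives the upper bound $Q(B_x)\leqslant Q\big(e^{\min_{y\leqslant x}(R_y-R_x)}\big)$.

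\textbf{Step 2: fluctuation estimate for the upper bound.} Reverse time from $x$. Then $\min_y(R_y-R_x)$ becomes the minimum $L_x$ of a centered walk of length $\approx x$. Sparre Andersen / Kozlov-type estimates give $Q(e^{L_x})\sim c\,x^{-1/2}$; the point is to split according to the location of the minimum and use $P(\min_{k\leqslant n}\geqslant -u)\sim c\,V(u)n^{-1/2}$ with renewal function $V$. The continuous index $x$ is handled through the lattice structure: $h$ is skip-free, so $T_x$ is a renewal sequence of hitting times at integer levels and $R$ really is a random walk on integers.

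\textbf{Step 3: convergence of $\sqrt{x}\,Q(B_x)$.} Decompose at the time $\tau_x$ where $R$ on $[0,x]$ attains its minimum. Conditioned on this, the pre-minimum piece is a walk staying above its terminal value and the post-minimum piece stays nonnegative. Following the Afanasyev–Geiger–Kersting–Vatutin strategy for intermediately subcritical BPRE, we show that under the conditioned measures $P^+$ (walk conditioned to stay positive, built via $V$ as an $h$-transform) the functional $B_x$ converges to a nondegenerate limit $B_\infty^{\pm}$. Both pieces, post-minimum and reversed pre-minimum, are transient to $+\infty$. Then dominated convergence together with the local estimates $Q(\tau_x=k,\ \cdot)\approx$ (product of ladder tail probabilities) yields $\sqrt{x}\,Q(B_x)\to C_2\in(0,\infty)$, with positivity coming from the lower bound in Step 1.

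The main obstacle will be Step 3. Specifically, we need to prove that the infinite sums $\sum_y e^{-R_y}\eta_{(y)}$ converge $P^+$-almost surely. This is where the condition $\E\big(e^{X_1-\Lambda_e(1)}(\log_+\eta_1)^{2+\delta}\big)<\infty$ in \cref{additional assumption for intermediately} enters. Under the $h$-transformed measure the walk grows like $\sqrt{n}$, so we need $\log_+\eta$ at step $n$ to be $o(\sqrt{n})$ summably. A Borel–Cantelli argument using the moment $2+\delta$ controls this, provided the size-biasing by $V$ is handled; we would try bounding $V(u)\leqslant c(1+u)$ and using Hölder's inequality. A further subtlety is that the environment is seen through the random times $T_y$, which are independent of the $X_i$. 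We would therefore first condition on $P_{\overline{Y}}$ and carry out the arguments for the walk $\sum X_i$ sampled at the renewal times $T_y$.
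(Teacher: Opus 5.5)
\textbf{Gap in Step 1: the upper bound on $B_x$.} The bound $B_x\leqslant\min_{y\leqslant x}e^{-(R_x-R_y)}\wedge 1$ is the step everything else rests on, and it is false pathwise.

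In the coupled picture $B_x=\bigl(1+\sum_{k=-T_x}^{-1}e^{J_k}\Phi(\mathcal{T}_k,\overline{Y}_k,0)\bigr)^{-1}$. The spine reaches the level by construction, but the side subtrees may contribute nothing. For instance $D_k=1$ for all $k$, or every sibling line dies before reaching $0$. Then $B_x=1$ however high $R$ has gone.

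Nor is there a Geiger--Kersting analogue to borrow. In BPRE the bound $P_\xi(Z_n>0)\leqslant e^{\min_k S_k}$ averages over the tree given only the environment. Here $R$ depends on the spine path through $T_x$. The inclusion $\{M\geqslant x\}\subset\{M\geqslant y\}$ only yields $P_\xi(M\geqslant x)\leqslant\min_{y\leqslant x}\tilde{P}_\xi(e^{R_y})$, with the minimum outside the spine expectation, which is the wrong side for you. The paper's closing remark names exactly this as the obstruction. Also, \Cref{Lemma of Strongly} gives only a.s. convergence of $B_x$, not such a bound.

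Without a substitute, Steps 2--3 give at best $\liminf_x\sqrt{x}\,Q_E\otimes P_{\overline{Y}}(B_x)>0$, from the Jensen lower bound. You have nothing to show that paths whose reversed maximum is large or attained late contribute $o(x^{-1/2})$, nor that the limit is finite. Separately, your lower bound has the exponent reversed: in forward coordinates the terms are $e^{R_x-R_y}$, since $B_x$ is small when $R_x$ lies far above its running minimum.

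\textbf{What the paper does instead.} Write $Q$ for $Q_E\otimes P_{\overline{Y}}$. The paper replaces the pathwise bound with the averaged estimate of \Cref{Modified Upper Estimate}: for every $p>1$, $\sup_x Q(e^{R_x/p}B_x)\leqslant C_p$.
\begin{itemize}
\item \emph{Proof of the estimate.} Undo the change of measure: $Q(e^{R_x/p}B_x)=e^{\lambda_1x}\,P_E\otimes\tilde{P}_\xi(e^{(1+1/p)R_x}B_x)$. Then use $R_x\leqslant U-\lambda_1x$, where $U=\sup_n(S_n-n\Lambda_e(1))$ has tail $O(e^{-y})$. Combine this with $\tilde{P}_\xi(e^{R_x}B_x)=P_\xi(M\geqslant x)\leqslant 1$.
\item \emph{Summable bound.} Since $B_j\leqslant1$, H\"older gives $Q(B_j;\gamma_j=j)\leqslant C_p^{1/p}\,Q(e^{-qR_j/p^2};\gamma_j=j)^{1/q}=O(j^{-3/(2q)})$. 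This is summable once $p>3$, i.e.\ $q<3/2$.
\item \emph{Monotonicity.} In the reversed (coupled) coordinates, $B_x$ is nonincreasing in $x$; your forward formulation loses this. So $B_x\leqslant B_j$ for $j\leqslant x$.
\item \emph{Conclusion.} Independence of the post-$j$ increments then gives $Q(B_x;\gamma_x=j)\leqslant Q(B_j;\gamma_j=j)\,Q(\max_{s\leqslant x-j}R_s\leqslant 0)$. This is the uniform control over large $j$ that your Step 3 needs.
\end{itemize}
Your Borel--Cantelli argument for a.s. convergence of $B_x$ under the conditioned measure, using the $(\log_+\eta_1)^{2+\delta}$ moment, does match the paper's.
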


	\begin{remark}
		For the $\alpha>1$ and $\varTheta(1)=0$ cases subcritical branching random walk in random environment, the condition \cref{22} is essential, which implies that the system necessarily admits a nontrivial random environment. Under this condition, besides the exponential decay, for the further polynomial decay, the environment needs to be more favorable for the system to reach larger heights. This is the reason for the emergence of the additional $\sqrt{x}$ decay.
	\end{remark}
	
	Under $\alpha>1$, if $\varTheta(1)>0$, the leading exponential decay of $\P(M\geqslant x)$ is not given by $e^{-\lambda_1 x}$ any more.
	To obtain the probability estimate for such cases, we should introduce a reference function $\varTheta(\rho)$.
	
	For any $\rho\in (0,\alpha]$, we have $\Lambda_e(\rho)\leqslant 0$. Thus, there exists a unique finite and positive number $\lambda_\rho$ such that $\Lambda_e(\rho)+ \rho \Lambda_s(\lambda_\rho)=0$.
	The function $\varTheta(\rho)$ is defined on $(0,\alpha]$, such that
	\begin{equation}
		\varTheta(\rho)
		= \Lambda_e^\prime(\rho) + \Lambda_s(\lambda_\rho) -\lambda_\rho \Lambda_s^\prime(\lambda_\rho), \quad \forall ~ \rho \in (0,\alpha].
	\end{equation}
	
	\begin{remark}
		About the equation $\Lambda_e(\rho)+ \rho \Lambda_s(\lambda_\rho)=0$, it represents the balance between the cost required for a measure transformation on the environment and the cost required for a measure transformation on the jump distribution, thereby enabling the extraction of the dominant order of the tail probability of the maximal displacement. Strictly speaking, for the random variable $R_x$ (see \cref{def of R_x}), we should obtain $P_E\otimes P_{\overline{Y}}\left[ e^{\rho(R_x+\lambda_\rho x)} \right] = 1$, then the leading order of tail probability of $M$ is exactly given by $e^{\rho \lambda_\rho x}$.
	\end{remark}
	
	About the function $\varTheta(\rho)$, following lemma provides some necessary properties.
	\begin{lemma}\label{Classify}
		Under \Cref{subcritical}and \Cref{Jump Assumption},
		the function $\varTheta(\rho)$ is finite on $(0,\alpha]$, where $\alpha:=\sup\left\{\rho:\Lambda_e(\rho)\leqslant 0\right\}$, and is a strictly increasing function on $(0,\alpha]$. Moreover, we have 
		\begin{equation}
			\lim\limits_{\rho\rightarrow 0^+} \varTheta(\rho)<0.
		\end{equation}
		And if $\alpha<\infty$, we have
		\begin{equation}
			\varTheta(\alpha)>0,
		\end{equation}
		which leads that in $ (0,\alpha] $, the exist a unique zero point of $\varTheta(\rho)$.
	\end{lemma}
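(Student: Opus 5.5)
Write $g(\lambda):=\Lambda_s(\lambda)-\lambda\Lambda_s^\prime(\lambda)$, so that $\varTheta(\rho)=\Lambda_e^\prime(\rho)+g(\lambda_\rho)$, where $\lambda_\rho=\kappa\bigl(-\Lambda_e(\rho)/\rho\bigr)$ and $\lambda_\alpha=0$ when $\alpha<\infty$. The plan is to study the two summands separately. Each one is monotone. Continuity then gives the intermediate value argument.

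\emph{Preliminary facts.} By \Cref{subcritical}, $\Lambda_e$ is finite on $[0,\infty)$, hence smooth and convex on $(0,\infty)$, with $\Lambda_e(0)=0$ and $\Lambda_e^\prime(0+)=a<0$. Consequently $\alpha>0$, $\Lambda_e<0$ on $(0,\alpha)$, and $\Lambda_e(\alpha)=0$ if $\alpha<\infty$. Every quantity in $\varTheta$ is finite on $(0,\alpha]$, including $\Lambda_e^\prime(\alpha)$.

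By \Cref{Jump Assumption}, $\Lambda_s$ is finite, smooth and strictly convex on $[0,\infty)$, because $\mu$ is non-trivial. It satisfies $\Lambda_s(0)=\Lambda_s^\prime(0)=0$, so $\kappa$ is a continuous increasing bijection. Moreover
\[
g^\prime(\lambda)=-\lambda\Lambda_s^{\prime\prime}(\lambda)<0 \quad\text{for }\lambda>0,
\]
so $g$ is strictly decreasing, with $g(0)=0$.

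\emph{Monotonicity.} Convexity of $\Lambda_e$ together with $\Lambda_e(0)=0$ makes $\rho\mapsto\Lambda_e(\rho)/\rho$ nondecreasing. Hence $-\Lambda_e(\rho)/\rho$ is nonincreasing, and so $\lambda_\rho$ is nonincreasing and continuous in $\rho$. It follows that $g(\lambda_\rho)$ is nondecreasing, and so is $\Lambda_e^\prime(\rho)$. Both summands are strictly increasing as soon as $\Lambda_e$ is strictly convex, that is, as soon as $X$ is non-degenerate.

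This is the one delicate point. If $X\equiv a$, then $\Lambda_e(\rho)=a\rho$ and $\varTheta$ is the constant $a+g(\kappa(-a))$. So strict monotonicity must use the (implicit) non-degeneracy of the environment. I would state this explicitly and treat the degenerate case as the constant-environment case, where only the sign conclusions below remain true.

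\emph{Endpoint values.} As $\rho\to0^+$ we have $\Lambda_e(\rho)/\rho\to a$ and $\Lambda_e^\prime(\rho)\to a$, so $\lambda_\rho\to\lambda_0=\kappa(-a)>0$. Using $\Lambda_s(\lambda_0)=-a$,
\[
\lim_{\rho\to0^+}\varTheta(\rho)=a+\Lambda_s(\lambda_0)-\lambda_0\Lambda_s^\prime(\lambda_0)=-\lambda_0\Lambda_s^\prime(\lambda_0)<0.
\]
The sign holds because $\Lambda_s^\prime(\lambda_0)>\Lambda_s^\prime(0)=0$ by strict convexity.

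If $\alpha<\infty$, then $\lambda_\alpha=0$ and $\varTheta(\alpha)=\Lambda_e^\prime(\alpha)$. Since $\Lambda_e$ is convex and non-affine with $\Lambda_e(0)=\Lambda_e(\alpha)=0$, we get $\Lambda_e^\prime(\alpha)>0$. Otherwise $\Lambda_e$ would be nonincreasing on $[0,\alpha]$, forcing $\Lambda_e(\alpha)<0$.

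\emph{Conclusion.} $\varTheta$ is continuous and strictly increasing on $(0,\alpha]$, with negative limit at $0^+$ and, when $\alpha<\infty$, a positive value at $\alpha$. The intermediate value theorem then gives exactly one zero. When $\alpha=\infty$, the same argument gives uniqueness, but existence of a zero requires $\varTheta$ to become positive, which the lemma does not assert.
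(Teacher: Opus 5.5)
Your proof is correct and follows essentially the same route as the paper, which differentiates to get $\varTheta^\prime(\rho)=\Lambda_e^{\prime\prime}(\rho)-\lambda_\rho\frac{\d\lambda_\rho}{\d\rho}\Lambda_s^{\prime\prime}(\lambda_\rho)$ (exactly your $g^\prime$ composed with the nonincreasing map $\rho\mapsto\lambda_\rho$) and then evaluates the same two endpoint limits. Your caveat is also correct: the paper's assertion $\Lambda_e^{\prime\prime}(\rho)>0$ tacitly requires $X$ to be non-degenerate, and for $X\equiv a$ (which forces $\alpha=\infty$) $\varTheta$ is constant, so strict monotonicity fails there, while the existence-and-uniqueness conclusion for $\alpha<\infty$ is unaffected.
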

	
	\begin{proof}
		Obviously, under \Cref{subcritical}and \Cref{Jump Assumption}, according to the definition of $\varTheta(\rho)$, it is finite on $(0,\alpha]$.
		Calculating the deviation of $\varTheta(\rho)$, we have
		\begin{equation}\label{deviation of varTheta}
			\varTheta^\prime(\rho)
			= \Lambda_e^{\prime\prime}(\rho)-\lambda_\rho \frac{\d \lambda_\rho}{\d \rho}\Lambda_s^{\prime\prime}(\lambda_\rho).
		\end{equation}
		Under \Cref{subcritical} and \Cref{Jump Assumption}, according to the convexity of log-Laplace exponent, for any $\rho>0$, we have
		$ \Lambda_e^{\prime\prime}(\rho)>0$ and $ \Lambda_s^{\prime\prime}(\lambda_\rho)>0 $.
		Also, for $\rho\in(0,\alpha]$,  by the convexity and $\Lambda_e(0)=0$, we know that $ \frac{\Lambda_e(\rho)}{\rho} $ is a increasing function about $\rho$. Then, due to the relationship between $\lambda_\rho$ and $\rho$ (i.e. the equation $\Lambda_e(\rho)+ \rho \Lambda_s(\lambda_\rho)=0$), for $\rho\in (0,\alpha]$, $\lambda_\rho$ is a decreasing function about $\rho$. Therefore, we have $\frac{\d \lambda_\rho}{\d \rho}\leqslant 0$.
		Combining with \cref{deviation of varTheta}, for any $\rho\in (0,\alpha]$, we have
		\begin{equation}
			\varTheta^\prime(\rho)>0,
		\end{equation}
		which ensures that $\varTheta(\rho)$ is strictly increasing on $(0,\alpha]$.
		
		For $\rho\rightarrow 0^+$, under \Cref{subcritical}, we have
		\begin{equation}
			\lim\limits_{\rho\rightarrow 0^+} \frac{\Lambda_e(\rho)}{\rho}
			=\Lambda_e^{\prime}(0)<0
		\end{equation}
		And by the continuous of $\Lambda_s(\lambda)$ and its inverse function $ \kappa(\delta) $, we have
		\begin{equation}
			\lambda_0:=\lim\limits_{\rho\rightarrow 0^+} \lambda_\rho = \kappa(-\Lambda_e^\prime(0))>0
		\end{equation}
		which satisfies $\Lambda_s(\lambda_0) = -\Lambda_e^{\prime}(0)$.
		Then, be the continuity of corresponding function, we have
		\begin{equation}
			\lim\limits_{\rho\rightarrow 0^+} \varTheta(\rho)
			=
			\Lambda_e^{\prime}(0) +  \Lambda_s(\lambda_0)
			-\lambda_0 \Lambda_s^\prime(\lambda_0)
			= -\lambda_0 \Lambda_s^\prime(\lambda_0)<0.
		\end{equation}
		
		And if $\alpha<\infty$, under \Cref{subcritical}, we have $\Lambda_e(\alpha)=0$ and $\Lambda_e^\prime(\alpha)>0$, then by the relationship $\Lambda_e(\rho)+ \rho \Lambda_s(\lambda_\rho)=0$, we have $\lambda_\alpha = 0$, thus
		\begin{equation}
			\varTheta(\alpha) = \Lambda_e^\prime(\alpha)>0.
		\end{equation}
		
		So far, \Cref{Classify} is proved.
	\end{proof}
	
	Note that, the function $\varTheta(\rho)$ doesn't require $\alpha>1$. And in fact, relying on the \Cref{Classify}, Case $\alpha>1$ and $\varTheta(1)>0$ and Case $\alpha\leqslant 1$ can be considered uniformly together.
	By \Cref{Classify}, if $\alpha>1$ and $\varTheta(\alpha)>0$ or $\alpha<1$, for both cases, there exists a unique $\rho\in(0,1)$ such that
	\begin{equation}
		\varTheta(\rho)= \Lambda_e^\prime(\rho) + \Lambda_s(\lambda_\rho) -\lambda_\rho \Lambda_s^\prime(\lambda_\rho) = 0,
	\end{equation}
	where $\lambda_\rho$ is the unique positive solution of equation $\Lambda_e(\rho)+\rho\Lambda_s(\lambda_\rho) =0$.
	For such cases, the exponential decay of $\P(M\geqslant x)$ is given by $ e^{-\rho\lambda_\rho x} $. The following theorem precisely characterizes the exponential decay of the annealed tail probability for $M$, and also providing a rough estimate of the additional polynomial decay.
	
	\begin{theorem}\label{Theorem of weakly cases}
		Under \Cref{subcritical} and \Cref{Jump Assumption}, assume that $\alpha>1$ and $\varTheta(1)>0$ or $\alpha<1$, by \Cref{Classify}, for both cases, there exists a unique $\rho\in(0,1)$ such that
		\begin{equation}\label{25}
			\varTheta(\rho)= \Lambda_e^\prime(\rho) + \Lambda_s(\lambda_\rho) -\lambda_\rho \Lambda_s^\prime(\lambda_\rho) = 0,
		\end{equation}
		where $\lambda_\rho$ is the unique positive solution of equation $\Lambda_e(\rho)+\rho\Lambda_s(\lambda_\rho) =0$.
		In addition, we assume that there exists some $\delta>0$ such that
		\begin{equation}\label{additional assumption for weakly}
			\E\left(e^{\rho X_1-\Lambda_e(\rho)} (\log_+ \eta_1)^{2+\delta}\right)<\infty.
		\end{equation}
		Then there exist some finite and positive constants $C_3$ and $C_4$, such that for any $x\geqslant 0$, we have
		\begin{equation}
			C_3 x^{-\frac{3}{2}} \leqslant e^{\rho\lambda_\rho x }\P(M\geqslant x ) \leqslant C_4.
		\end{equation}
	\end{theorem}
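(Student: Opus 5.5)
The plan is to start from the coupling identity of the Coupling Lemma, $\P(M\geqslant x)=P_E\otimes P_{\overline{Y}}(e^{R_x}B_x)$. We choose the free parameter $\lambda=\lambda_\rho$, with $\rho\in(0,1)$ the zero of $\varTheta$ given by \Cref{Classify}, and then change measure with the exponential martingale of parameter $\rho$ instead of $1$.

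\textbf{The measure change and the recentred walk.} Since $\Lambda_e(\rho)+\rho\Lambda_s(\lambda_\rho)=0$, the process $e^{\rho(R_x+\lambda_\rho x)}$ has mean one. It defines a tilted measure $Q^\rho_E\otimes P_{\overline{Y}}$ under which each $X_i$ has law $e^{\rho X-\Lambda_e(\rho)}P_E$. This gives
\begin{equation*}
e^{\rho\lambda_\rho x}\P(M\geqslant x)=Q^\rho_E\otimes P_{\overline{Y}}\left(e^{(1-\rho)R_x}B_x\right).
\end{equation*}
Exactly as in the computation leading to \cref{mean of R}, the drift of $R_x$ under the tilted measure is $\varTheta(\rho)/\Lambda_s^\prime(\lambda_\rho)=0$. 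So $R_x$ is a mean-zero random walk indexed by position, with finite variance by Assumption \ref{subcritical}. Because $1-\rho>0$, the weight $e^{(1-\rho)R_x}$ penalises positive excursions of $R_x$. The problem therefore reduces to a fluctuation estimate for a centred walk weighted by an exponential of its endpoint, times the functional $B_x$.

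\textbf{Upper bound.} I would use the trivial bound $B_x\leqslant e^{-R_x}\wedge 1$, or more precisely the bound $\P(M\geqslant x)\leqslant P_E\otimes P_{\overline{Y}}(\min(1,e^{R_x}))$ coming from the first-moment many-to-one estimate together with $P_\xi(\cdot)\leqslant1$. The intent is that $B_x$ is at most of order $e^{-\max_{y\leqslant x}R_y}$ relative to $e^{R_x}$. Then
\begin{equation*}
e^{\rho\lambda_\rho x}\P(M\geqslant x)\leqslant Q^\rho\bigl(e^{(1-\rho)R_x}\min(1,e^{-R_x})\bigr)=Q^\rho\bigl(e^{(1-\rho)R_x}\wedge e^{-\rho R_x}\bigr)\leqslant 1,
\end{equation*}
since $\rho\in(0,1)$. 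This yields $C_4$ with essentially no input beyond the identity. If $B_x$ is defined more subtly in \cref{def of overlineA}, I would still only need $e^{R_x}B_x\leqslant 1$, which is a probability bound on the coupled event.

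\textbf{Lower bound, the main obstacle.} I would restrict to the event
\begin{equation*}
G_x=\Bigl\{\max_{y\leqslant x}R_y\leqslant 0,\ R_x\in[-1,0]\Bigr\},
\end{equation*}
that is, an excursion of the centred walk staying below $0$ and ending near $0$. Standard local limit theorems for walks conditioned to stay negative (a Caravenna/Vatutin--Wachtel type ballot estimate) give $Q^\rho(G_x)\asymp x^{-3/2}$. On $G_x$ the weight $e^{(1-\rho)R_x}$ is bounded below. The real difficulty is showing $B_x\geqslant c>0$ on (most of) $G_x$. For this I would use a Paley--Zygmund or second-moment argument along the spine, as in the proof of \Cref{Lemma of Strongly}: the quenched second moment is controlled by $\sum_k e^{-(R\text{ at the spine's } k\text{th step})}\eta_k$ (the reciprocal of $B_x$ is bounded by $1$ plus such a sum). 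On $G_x$ the walk stays nonpositive, but I need it to drift sufficiently far below $0$ away from the endpoints so that the series is summable. I would strengthen $G_x$ to require $R_y\leqslant -c\min(y,x-y)^{1/2-\epsilon}$, which costs only a constant factor by the invariance principle for meanders and bridges. The environment-dependent terms $\eta_k$ are controlled using the moment condition \cref{additional assumption for weakly}: the $(2+\delta)$ moment of $\log_+\eta$ under the tilt lets Borel--Cantelli and a union bound show $\log\eta_k=o(k^{1/2-\epsilon})$ uniformly with high conditional probability on the excursion. Together these give $B_x\geqslant c$ on a subset of $G_x$ of $Q^\rho$-probability at least $c'x^{-3/2}$, hence the lower bound with $C_3$.
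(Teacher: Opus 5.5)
Your change of measure, the identity $e^{\rho\lambda_\rho x}\P(M\geqslant x)=Q^\rho_E\otimes P_{\overline{Y}}\bigl(e^{(1-\rho)R_x}B_x\bigr)$ and the zero-drift computation are correct. The upper bound, however, has a genuine gap.

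\textbf{The pathwise bound fails.} The claim $e^{R_x}B_x\leqslant 1$ (equivalently $B_x\leqslant e^{-R_x}\wedge 1$) is false. By the spine decomposition, $e^{R_x}B_x=1/W_{\mathcal{L}_x}(\lambda)$, and the only guaranteed contribution to $W_{\mathcal{L}_x}(\lambda)$ is the spine term $e^{-R_x}$. So whenever the normal subtrees contribute little (e.g.\ none of them reaches $x$), $e^{R_x}B_x\approx e^{R_x}$, which is unbounded on favourable environments. Only the $\tilde P_\xi$-average of $e^{R_x}B_x$ is a probability.

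\textbf{The first-moment route does not give your inequality either.} The first-moment bound together with $P_\xi(\cdot)\leqslant 1$ gives $P_\xi(M\geqslant x)\leqslant\min\{1,\tilde P_\xi(e^{R_x})\}$, with the minimum \emph{outside} the spine expectation. By concavity of $t\mapsto\min(1,t)$, this quantity dominates $\tilde P_\xi(\min(1,e^{R_x}))$. So your inequality $\P(M\geqslant x)\leqslant P_E\otimes P_{\overline{Y}}(\min(1,e^{R_x}))$ does not follow. This is exactly the non-commutation that the paper's final remark names as the obstruction. A warning sign is that your bound would even give $O(x^{-1/2})$, since $Q^\rho_E\otimes P_{\overline{Y}}(e^{(1-\rho)R_x}\wedge e^{-\rho R_x})=O(x^{-1/2})$ for a centred walk.

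\textbf{The missing ingredient} is an environment-only control of the spine-averaged first moment. Set $U:=\sup_{n\geqslant 0}(S_n+n\Lambda_s(\lambda_\rho))$. Then $R_x\leqslant U-\lambda_\rho x$ for every spine path. Since $e^{\rho(S_n+n\Lambda_s(\lambda_\rho))}$ is a mean-one $P_E$-martingale, $P_E(U\geqslant t)\leqslant e^{-\rho t}$. Hence $P_\xi(M\geqslant x)\leqslant\min\{1,e^{U-\lambda_\rho x}\}$, and
\[
\P(M\geqslant x)\leqslant\int_{-\infty}^{\lambda_\rho x}e^{t-\lambda_\rho x}\,P_E(U\geqslant t)\,\d t\leqslant\frac{2-\rho}{1-\rho}\,e^{-\rho\lambda_\rho x}.
\]
The paper uses the same $U$, but combines it with $\tilde P_\xi(e^{R_x}B_x)\leqslant 1$ and a H\"older bootstrap between the exponents $1-\delta$ and $1-\rho$.

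\textbf{Lower bound.} Your lower bound is a reasonable alternative in outline. Like the paper, you effectively pass to $A_x=\bigl(1+\sum_k e^{J_k}\eta_{k+1}\bigr)^{-1}$. This is just conditional Jensen given $(\xi,\overline{Y})$: the sum is the conditional mean of $B_x^{-1}-1$, not a pathwise bound. So no second-moment or Paley--Zygmund step is needed.

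The paper then writes $A_x^{-1}=1+\sum_j\chi(\zeta_{j+1})e^{R_j}$ and splits it into an initial block and a time-reversed final block. It then imports the conditional limit theorem and the $x^{-3/2}$ asymptotics of Afanasyev--B\"oinghoff--Kersting--Vatutin on $\{\gamma_x=j\}$. You instead restrict by hand to excursions below the envelope $-c\min(y,x-y)^{1/2-\epsilon}$. This is more self-contained but leaves two real steps:
\begin{enumerate}
\item Showing the envelope costs only a constant factor. This needs the conditioned walk near both endpoints, not only the bulk invariance principle.
\item Carrying out the union bound on $\chi(\zeta_{j+1})$ under the conditioned law. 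There, $\chi(\zeta_{j+1})$ is correlated with the increment $R_{j+1}-R_j$ of the same excursion. It also contains the within-excursion fluctuations of $\sum X_i$, not only $\log\eta_k$. The paper's $I_1+I_2$ splitting via subadditivity of the renewal function is what handles this.
\end{enumerate}
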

	
	\begin{remark}
		For $\alpha>1$ and $\varTheta(1)>0$ or $\alpha\leqslant 1$  cases subcritical branching random walk in random environment, the condition \cref{25} is essential. Under this condition, for annealed probability estimates, the maximum position can reach the target height by choosing the non-typical environments, whose associated cost is lower. This ultimately leads to a difference in the exponential decay of the annealed probability compared with other cases.
		
		However, owing to the absence of a crucial and sharp upper bound inequality on the harmonic moments (i.e. the conditional expectation of $B_x$, which is defined in \cref{def of overlineA}), we are unable to obtain the exact polynomial decay. Nevertheless, we conjecture that the exact polynomial order should be given by $x^{-\frac{3}{2}}$.
	\end{remark}

	Based on \Cref{Theorem of strongly cases,Theorem of intermediately cases,Theorem of weakly cases}, we classify the subcritical branching random walk in three cases.
	\begin{definition}\label{Def of classfy}
		About the subcritical branching random walk in random environment (BRWRE in short) $(\mathcal{T},V)$,
		
		(1) if $\alpha>1$ and $\varTheta(1)<0$, then $(\mathcal{T},V)$ is said to belong into Class \Romannum{1} subcritical BRWRE.
		
		(2) if $\alpha>1$ and $\varTheta(1)=0$, then $(\mathcal{T},V)$ is said to belong into Class \Romannum{2} subcritical BRWRE.
		
		(3) if $\alpha>1$ and $\varTheta(1)>0$ or $\alpha\leqslant1$, then $(\mathcal{T},V)$ is said to belong into Class \Romannum{3} subcritical BRWRE.
	\end{definition}
	
	\begin{remark}
		If $\alpha>1$, depending on the sign of $\varTheta(1)$, the above classification is complete.
		However, it is possible that $\alpha$ is less than or equal to 1. 
		For the branching random walk in random environment falling into this case, it should note that the measure change on environment with exponent parameter $\rho=1$ is excessive, which share the same intrinsic structure as the $\alpha>1$ and $\varTheta(1)>0$ case.
		So, we categorized $\alpha\leqslant 1$ as the Class \Romannum{3} as well. And in the latter part of the paper, Class \Romannum{3} admits a relatively unified treatment.
		Moreover, the above classification incorporating both $ \alpha$ and the sign of $\varTheta(1)$ is exhaustive under \Cref{subcritical} and \Cref{Jump Assumption}.
	\end{remark}
	
	\begin{remark}
		If $ \Lambda_e^\prime(1)\leqslant 0 $, i.e. the classical strongly or intermediately subcritical BPRE (see \cite{B2}), regardless of the specific structure of the jump distribution, we always have $\alpha>1$ and $\varTheta(1)<0$, thus the corresponding subcritical BRWRE belongs into Class \Romannum{1} cases.
		And if $ \Lambda_e^\prime(1)>0 $, i.e. the classical weakly subcritical BPRE, the corresponding structure becomes much more complicated, and all three classes outlined above can occur.
	\end{remark}
	
	\begin{remark}
		The classification comes form the affection of random environment. For the subcritical branching random walk in constant environments, see \cite{Neuman2017,Fu2025}, the tail probability of the maximum position exhibits only exponential decay, whereas for subcritical branching random walk in random environment, its asymptotic behavior is far more intricate. This profoundly reflects the essential impacts induced by random environments.
	\end{remark}

	The rest of the paper is organized as follow.
	In \Cref{Section2.1,Section2.2}, we provide some preliminaries.
	And in \Cref{Section2.3}, we construct an annealed coupling, which plays an important role in investigating annealing probability decay. In \Cref{Section4}, we prove our main results.
	
	\section{Preliminaries}\label{Section2}
	
	Given the environment $ \xi$, under $P_\xi$, the branching system $(\mathcal{T},V)$ is a typical inhomogeneous branching random walk. Conditional on the first generation of the branching process, it is not hard to verified that the quenched probability $P_\xi(M\geqslant x)$ satisfies a typical non-linear stochastic difference equation. For differential equations, we can have the Feynman-Kac formula to formally construct the solutions. However, in the discrete case, similar tools may need to be further refined. For example, in \cite{Lalley2015}, they introduced a discrete Feynman-Kac formula and investigate the maximal displacement of critical branching random walk, and later in \cite{Neuman2017} and so on.
	It should be noted that,
	for the branching system, the recently developed size-biased measure change method may be a relatively effective approach. Also, inspired by our previous work \cite{Fu2025} about homogeneous subcritical branching random walk, we can use the size-biased measure change and optional line to express the quenched probability $P_\xi(M\geqslant x)$.
	
	\subsection{Some Results About Random walk}\label{Section2.1}
	
	Assume that $ \{H_0:=0,H_1,\cdots\}$ is a random walk with one step distribution $\mu(\d y)$ (i.e. $\{H_k-H_{k-1}:k\geqslant 1\}$ are i.i.d random variables with the distribution $\mu(\d y)$), and we just use $Q$ denote the probability.
	Define by $\mathcal{G}_n:= \sigma(H_0,H_1,\cdots,H_n)$ the natural filtration of the random walk, and set $\mathcal{G}_\infty = \sigma\left(\bigcup\limits_{n=1}^\infty\mathcal{G}_n\right) $.
	
	For any $\lambda > 0$, due to the martingale property of $ \{e^{\lambda H_n-n\Lambda_s(\lambda)}:n\geqslant 0\} $ (i.e the exponent martingale), by Kolmogorov consistency theorem, there exists a probability $ Q_\lambda $ defined on $\mathcal{G}_\infty$, such that
	for any $n\geqslant 1$, \begin{equation}\label{key}
		\frac{\d Q_\lambda}{\d Q}\big|_{\mathcal{G}_n} = e^{\lambda H_n-n\Lambda_s(\lambda)}.
	\end{equation}
	Under \Cref{Jump Assumption}, for $\lambda >0$, under $Q_\lambda$, $\{H_0,H_1,\cdots\}$ is a random walk with positive drift $\Lambda_s^{\prime}(\lambda)$.
	Thus, for any positive integer $x$, the first hitting time $ \tau_x:=\inf\{k\geqslant1:H_k=x\} $ satisfies
	\begin{equation*}
		\tau_x<\infty,\quad  Q_\lambda \quad a.s.
	\end{equation*}
	And we are interested about the probability $Q_\lambda(\tau_x = n)$.
	
	\begin{lemma}\label{Hitting time of Random walk}
		Under \Cref{Jump Assumption}, for any positive integers $x,n$ and $\lambda>0$, We have
		\begin{equation*}
			Q_\lambda(\tau_x = n)
			=Q(\tau_x = n) e^{\lambda x-n\Lambda_s(\lambda)}.
		\end{equation*}
	\end{lemma}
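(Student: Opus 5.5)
The plan is to apply the change-of-measure formula \cref{key} directly to the event $\{\tau_x=n\}$ and then use the absence of overshoot from \Cref{R2} to evaluate $H_n$ on this event. First, $\{\tau_x=n\}\in\mathcal{G}_n$, because it is determined by $H_1,\dots,H_n$. So by the definition of $Q_\lambda$ on $\mathcal{G}_n$,
\begin{equation*}
	Q_\lambda(\tau_x=n) = Q\left[ e^{\lambda H_n - n\Lambda_s(\lambda)}\,\mathbbm{1}_{\{\tau_x=n\}}\right].
\end{equation*}

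Next, I evaluate $H_n$ on $\{\tau_x=n\}$. By the definition $\tau_x=\inf\{k\geqslant1:H_k=x\}$, on this event we have $H_n=H_{\tau_x}=x$ exactly. The density is therefore the deterministic constant $e^{\lambda x-n\Lambda_s(\lambda)}$, which can be pulled out of the expectation. This gives
\begin{equation*}
	Q_\lambda(\tau_x=n)=Q(\tau_x=n)\,e^{\lambda x-n\Lambda_s(\lambda)}.
\end{equation*}

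The only step that needs care is consistency between the two definitions of the hitting time. In \Cref{R2} the time is $\inf\{k:H_k\geqslant x\}$, while here it is $\inf\{k\geqslant 1:H_k=x\}$. Under \Cref{Jump Assumption}, upward jumps are at most $1$ and the walk is integer valued. Hence for a positive integer $x$, the walk cannot exceed $x$ before first visiting $x$, so the two times coincide. In any case, with the definition used here, $H_{\tau_x}=x$ holds by construction.

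Finiteness of $\Lambda_s(\lambda)$ for every $\lambda>0$ is guaranteed by \Cref{Jump Assumption}, since $h\leqslant 1$. This ensures that the exponential martingale and $Q_\lambda$ are well defined. There is no real obstacle: the argument is a one-line Radon--Nikodym computation on a $\mathcal{G}_n$-measurable event.
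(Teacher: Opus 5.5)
Your proof is correct and is essentially the paper's argument: both apply the density $e^{\lambda H_n-n\Lambda_s(\lambda)}$ on the $\mathcal{G}_n$-measurable event $\{\tau_x=n\}$ and use $H_{\tau_x}=x$ (no overshoot under the jump assumption) to pull out the constant factor $e^{\lambda x-n\Lambda_s(\lambda)}$. Your remark that the two definitions of $\tau_x$ (via $H_k=x$ and via $H_k\geqslant x$) coincide is a harmless clarification that the paper leaves implicit.
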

	
	\begin{proof}[Proof of \Cref{Hitting time of Random walk}]
		In fact, $\{\tau_x = n\} \in \sigma(H_0,H_1,\cdots,H_n)$.
		According to relationship between $Q_\lambda$ and $Q$, we have
		\begin{equation*}
			Q_\lambda(\tau_x = n)
			=
			Q (e^{\lambda H_n-n\Lambda_s(\lambda)},\tau_x=n).
		\end{equation*}
		And under \Cref{Jump Assumption}, we know that $H_{\tau_x} = x,~ Q~a.s.$.
		Hence, the lemma is proved.
	\end{proof}

	\subsection{Martingale, Optional line and Measure change}\label{Section2.2}
	
	In this subsection, by measure change and optional line methods,  we can obtain a useful expression about the probability of target event $\{M\geqslant x\}$ for time-inhomogeneous branching random walk.
	
	Given an environment $\xi \in \mathcal{P}\left(\N_0\right)^{\mathbb{Z}}$ such that $\lim\limits_{n\rightarrow\infty} S_n=-\infty$, where  $\{S_n;n\in \mathbb{Z}\}$ are defined in \cref{quenched random walk}.
	Under the quenched probability $P_\xi$,
	the branching system $(\mathcal{T},V)$ is a time-inhomogeneous branching random walk. For any $\lambda>0$, define $W_0:=1$ and for any $n\geqslant 1$,
	\begin{equation}\label{additive martingale}
		W_n(\lambda):= \sum_{\nu\in N_n} e^{\lambda V\left(\nu\right)-n\Lambda_s(\lambda)-S_{n}}.
	\end{equation}
	It is not hard to verify that under $P_\xi$, $\{W_n(\lambda),n\geqslant0\}$ is a martingale of the time-inhomogeneous branching system $(\mathcal{T},V)$. 
	
	For any positive integer $x>0$, define the optional line $\mathcal{L}_x$ such that
	\begin{equation}\label{Def of Ln}
		\mathcal{L}_x(\nu)=1 \quad \text{if and only if} \quad 
		V(\nu)\geqslant x ~ \text{and} ~V(\nu(k))<x ~\text{for any}~ k<|\nu|.
	\end{equation}
	Although $\mathcal{L}_x$ is a function on the nodes, it will be often convenient to identify $\mathcal{L}_x$ with the set of nodes where the function takes the value $1$, and we also call $\mathcal{L}_x$ the optional line. Abuse use of notations, we also use $\mathcal{L}_x$ to represent the set $\{\nu\in \mathcal{T} :\mathcal{L}_x(\nu) = 1\}$. 
	Intrinsically, $\mathcal{L}_x$ is the set of vertexes in $\mathcal{T}$ which hit $[x,\infty)$ for the first time.
	Note that the optional line $\mathcal{L}_x$ is a very simple line 
	(i.e. for any $\nu$, the value of $\mathcal{L}_x(\nu)$ is measurable with respect to $\sigma\left(\left\{X(\nu(i)),0\leqslant i \leqslant |\nu|\right\}\right)$, which means that whether $\nu$ is on the line or not is determined by looking the trajectory of its ancestries).
	Readers can see the conception about optional line in \cite{Biggins2004} and references therein.
	
	Next, for any $n\in \N$, let $\mathcal{L}_x^n$ be the function corresponding to the line
	formed by the member of $\mathcal{L}_x$ 
	in the first $n$ generations and the $n$-th generation nodes 
	with no ancestor in $\mathcal{L}_x$, i.e. for any $ \nu\in \mathcal{T} $, \begin{equation}\label{Lnx}\mathcal{L}_x^n(\nu) = 1 \quad\text{if and only if}\quad \begin{cases*}
			|\nu|\leqslant n\quad \text{and} \quad \mathcal{L}_x(\nu) = 1;
			\\
			|\nu| = n \quad\text{and} \quad  X(\nu(k)) \in (-\infty,x), ~\forall k \leqslant |\nu|.
	\end{cases*} \end{equation}
	Using above optional lines, together with additive martingale (eq. \eqref{additive martingale}),
	according to Lemma 6.1, 14.1 and 14.3 in \cite{Biggins2004},
	for any $\lambda>0$, we define (set $\sum_{\emptyset} := 0 $)
	\begin{equation}\label{optional line}
		W_{\mathcal{L}_x}(\lambda)
		:=
		\sum\limits_{\nu\in \mathcal{T}} 
		\mathcal{L}_x(\nu) e^{\lambda V(\nu)-\left|\nu\right|\Lambda_s(\lambda)-S_{|\nu|}}, \quad \E[W_{\mathcal{L}_x}(\lambda)] = 1.
	\end{equation}
	\begin{equation}\label{optional line martingale}
		W_{\mathcal{L}_x^n}(\lambda)
		:=
		\sum\limits_{\nu\in \mathcal{T}} 
		\mathcal{L}^n_x(\nu)
		e^{\lambda V(\nu)-\left|\nu\right|\Lambda_s(\lambda)-S_{|\nu|}},\quad
		\E[W_{\mathcal{L}_x^n}(\lambda)] = 1.
	\end{equation}
	\begin{equation}\label{martingale l1 convergence}
		\{W_{\mathcal{L}_x^n}(\lambda)\}_{n\geqslant 0}~~
		\text{is a martingale and converges in mean to}~~
		W_{\mathcal{L}_x}(\lambda).
	\end{equation}
	
	According to above relationship, for $P_\xi$ almost surely, we have
	$\{M\geqslant x\} = \{\exists \nu\in \mathcal{T}:V(\nu)\geqslant x\} = \{\mathcal{L}_x\neq \emptyset\} = \{W_{\mathcal{L}_x}>0\}.$ Therefore, we know
	\begin{equation}\label{tail probability equality relation}
		P_\xi(M\geqslant x) = P_\xi(|\mathcal{L}_x|>0) = P_\xi\left(W_{\mathcal{L}_x}(\lambda)>0\right),
	\end{equation}
	where $|\mathcal{L}_x|$ is the size of the set $\{\nu\in \mathcal{T} :\mathcal{L}_x(\nu) = 1\}$.

	Due to the non-negative martingale property, we can use the closed martingale $\{W_{\mathcal{L}_x^n}(\lambda)\}_{n\geqslant 0}$ to construct a new probability $\tilde{P}_\xi$, such that
	\begin{equation}\label{measure change}
		\d \tilde{P}_\xi = W_{\mathcal{L}_x}(\lambda) ~\d P_\xi.
	\end{equation}
	Thus, according to \cref{tail probability equality relation,measure change}, we have
	\begin{equation}\label{2.10}
		P_\xi (M\geqslant x) = P_\xi (W_{\mathcal{L}_x}(\lambda)>0) = \tilde{P}_\xi\left[\frac{1}{W_{\mathcal{L}_x}(\lambda)}\right]
	\end{equation}
	And under the probability measure $\tilde{P}_\xi$ (the well-known size-biased distribution), the branching system behaves as follow:
	\begin{itemize}
		\item In generation 0, single particle (denoted by $\omega_0$) locates at $0$, and it is the initial spine particle.
		\item For any $k\geqslant1$, in generation $k$, 
		the spine particle survived at generation $k-1$ (denoted by $\omega_{k-1}$) reproduces according to $F^S_k$ (the size-biased distribution of $F_k$, which is defined in \cref{size-biased distribution}). Then, uniformly choose one child as the next spine particle (denoted by $\omega_k$), and this spine particle moves according to $ e^{\lambda y-\Lambda_s(y)} \mu(\d y)$. Except spine particle, other newborn particles are normal, and move according to $\mu(\d y)$.
		\item For any $k\geqslant1$, in generation $k$, except spine particle, normal particles behave as usual (i,e. they reproduce according to $F_k$ and all children move independently according to jump distribution $\mu(\d x)$).
	\end{itemize}
	
	Denote by $\{V(\omega_0):=0,V(\omega_1),\cdots\}$ the trajectory of spine particles. According to above construction, we know that the distribution of $ \{V(\omega_0):=0,V(\omega_1),\cdots\} $ under $\tilde{P}_\xi$ is the same as the distribution of $\left\{H_0,H_1,\cdots\right\}$ under $Q_\lambda$ (defined in \Cref{Section2.1}) and is independent with the environment $\xi$.
	For any integer $x\geqslant1$, denote by \begin{equation}\label{first hitting time}
		\tau_x:=\inf\{k\geqslant0:V(\omega_k)\geqslant x\}
	\end{equation} the first hitting time of the trajectory.
	Let $\mathcal{G}:= \sigma\left(\left\{V(\omega_0):=0,V(\omega_1),\cdots\right\}\right)$ by the $\sigma$ field generated by the spine's trajectory.
	And under \Cref{Jump Assumption}, the random walk with one step distribution $\mu(\d x)$ doesn't experience overshoot. Therefore, for any $\nu\in\mathcal{T}$ such that $\mathcal{L}_x(\nu) =1$, we have $V(\nu) = x$.
	
	According to above construction, under the size-biased probability measure $\tilde{P}_\xi$, we have
	\begin{align}
		W_{\mathcal{L}_x} &= e^{\lambda V(\omega_{\tau_x})-\tau_x\Lambda_s(\lambda)-S_{\tau_x} } +\sum_{k=0}^{\tau_x-1} \sum_{ \nu \in \mathcal{T}_{\omega_k}, \mathcal{L}_x(\nu)=1 } 
		e^{\lambda V(\nu)-|\nu|\Lambda_s(\lambda) -S_{|\nu|} } \nonumber
		\\
		&= e^{\lambda x-\tau_x\Lambda_s(\lambda)-S_{\tau_x} } +\sum_{k=0}^{\tau_x-1} \sum_{ \nu \in \mathcal{T}_{\omega_k}, \mathcal{L}_x(\nu)=1 } e^{\lambda x-|\nu|\Lambda_s(\lambda) -S_{|\nu|} }
		\nonumber\\
		&= e^{\lambda x-\tau_x\Lambda_s(\lambda)-S_{\tau_x} }
		+ \sum_{k=0}^{\tau_x-1} e^{\lambda V(\omega_k)- k\Lambda_s(\lambda)-S_{k}}
		\Phi(\mathcal{T}_{\omega_k},V(\omega_k),x)
		\nonumber\\
		&=
		e^{\lambda x -\tau_x\Lambda_s(\lambda)-S_{\tau_x}}
		\left[
		1+\sum_{k=0}^{\tau_x-1} e^{\lambda (V(\omega_k)-x)- (k-\tau_x)\Lambda_s(\lambda)-(S_{k}-S_{\tau_x})}
		\Phi(\mathcal{T}_{\omega_k},V(\omega_k),x)
		\right]
		. \label{spine decomposition}
	\end{align}
	where $\mathcal{T}_{\omega_k}:=\{\omega_k\}\bigcup\{\nu \in \mathcal{T}: |\nu|>k,~ \nu(k)=\omega_k ~ \text{and} ~ \nu(k+1)\neq \omega_{k+1} \} $ is the subtree rooted at $\omega_k$ and constructed only by the normal particles (i.e. excluded the subtree rooted at $\omega_{k+1}$), and
	\begin{equation}\label{37}
		\Phi(\mathcal{T}_{\omega_k},V(\omega_k),x) := \sum_{ \nu \in \mathcal{T}_{\omega_k}, \mathcal{L}_x(\nu)=1 } e^{\lambda \left(x-V(\omega_k)\right)-(|\nu|-k)\Lambda_s(\lambda) -(S_{|\nu|}-S_k) }.
	\end{equation}
	
	\begin{remark}
		Note that, under $\tilde{P}_\xi$, for any $\nu\in\mathcal{T}$ such that $\mathcal{L}_x(\nu)=1$, either $\nu$ is the spine particle $\omega_{\tau_x}$, or $\nu$ is some normal particle and can trace back to $\omega_k$ for some $ 0\leqslant k<\tau_x$, which implies that $\nu\in \mathcal{T}_{\omega_k}$. And \cref{spine decomposition} is the realization of such observation.
	\end{remark}
	
	\begin{lemma}\label{Lemma of spine decomposition}
		Given the environment, under $\tilde{P}_\xi$, conditional on $\mathcal{G}$, for admissible 
		$k$ (i.e. $0\leqslant k\leqslant\tau_x-1$), the conditional distribution of $\Phi(\mathcal{T}_{\omega_k},V(\omega_k),x)$ depends only on the pair $\left(x-V(\omega_k),\Gamma_k(\xi) \right)$, where $\Gamma_k$ is defined in \cref{cut off operator}. Moreover, for different admissible $k$, the random variables $\{\Phi(\mathcal{T}_{\omega_k},V(\omega_k),x);0\leqslant k\leqslant \tau_x-1\}$ are conditional independent with each other.
	\end{lemma}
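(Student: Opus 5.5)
The plan is to read the lemma off the explicit construction of $\tilde{P}_\xi$ in terms of independent building blocks. Under $\tilde{P}_\xi$ the tree decomposes along the spine. The spine trajectory $\{V(\omega_k)\}$ is generated by spine displacements alone, so $\mathcal{G}$ is generated by those displacements. For each $k$, consider the spine's offspring number (law $F^S_{k+1}$), the uniform choice of the next spine among its children, the displacements of its non-spine children, and the independent subtrees those non-spine children generate. These form families that are mutually independent across $k$ and independent of the spine displacements. Conditioning on $\mathcal{G}$ therefore does not change the joint law of the off-spine data. This is where we use the fact, stated just before the lemma, that the spine path has the law of $H$ under $Q_\lambda$ independently of everything else.

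First step: make precise what $\mathcal{T}_{\omega_k}$ is built from. It consists of $\omega_k$, the $p(\omega_k)-1$ non-spine children of $\omega_k$ with their jumps (i.i.d. $\mu$), and, below each such child, an ordinary branching random walk. In that walk a particle of generation $j$ reproduces according to $F_{j+1}$ and its children jump according to $\mu$. So the data of $\mathcal{T}_{\omega_k}$, in positions relative to $V(\omega_k)$ and generations relative to $k$, is a measurable function of $(p(\omega_k),\text{choice of spine child},\text{independent noise})$. Its law is determined by $F^S_{k+1}$ together with $F_{k+2},F_{k+3},\dots$ and $\mu$, and hence by $\Gamma_k\xi$.

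Second step: rewrite $\Phi$ in these relative coordinates. Put $y=x-V(\omega_k)$. A node $\nu\in\mathcal{T}_{\omega_k}$ lies on $\mathcal{L}_x$ iff its relative path first reaches $[y,\infty)$ at $\nu$. Here we use that all ancestors $\omega_0,\dots,\omega_k$ of $\nu$ stay below $x$, which holds because $k<\tau_x$. The weight is $e^{\lambda y-(|\nu|-k)\Lambda_s(\lambda)-(S_{|\nu|}-S_k)}$, and $S_{|\nu|}-S_k=\sum_{i=k+1}^{|\nu|}\log\overline{F}_i$ is a function of $\Gamma_k\xi$. So $\Phi=\phi(y,\Gamma_k\xi,\text{relative tree data at }k)$ for a fixed measurable $\phi$.

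Third step: conditionally on $\mathcal{G}$, $y$ is fixed and the relative tree data has a law depending only on $\Gamma_k\xi$. The conditional law of $\Phi$ is therefore a function of $(x-V(\omega_k),\Gamma_k\xi)$. For $k\ne k'$ the relative tree data are built from disjoint independent families, so the $\Phi$'s are conditionally independent. The set of admissible $k$ is itself $\mathcal{G}$-measurable, which causes no trouble.

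The main obstacle is rigor in the first step. One has to justify that the spine-decomposition construction of $\tilde{P}_\xi$ via the optional-line martingale $W_{\mathcal{L}_x}$ yields exactly this independent structure, even though the measure change uses the stopped line rather than a fixed generation. I would take this from the standard Lyons-type spine construction (Biggins--Kyprianou \cite{Biggins2004}) applied to the generation-$n$ martingale. For events depending on the first $n$ generations, $\d\tilde P_\xi$ agrees on $\mathcal{F}_n$ with $W_{\mathcal{L}^n_x}\,\d P_\xi$. The independence statements are then verified on finite-dimensional cylinder events and extended by a monotone class argument.
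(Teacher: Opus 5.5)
Your proposal is correct and follows the same route as the paper, which reads the lemma directly off the spine description of $\tilde P_\xi$: conditionally on $\mathcal G$, the off-spine subtree $\mathcal T_{\omega_k}$ has a law determined by $\Gamma_k(\xi)$, and spatial homogeneity plus the definition of $\Phi$ then give both the dependence only on $(x-V(\omega_k),\Gamma_k(\xi))$ and the conditional independence over admissible $k$. Your relative-coordinate rewriting of $\Phi$ and your remark on justifying the spine construction for the stopped-line measure $W_{\mathcal L_x}\,\mathrm{d}P_\xi$, which the paper takes for granted, are more careful versions of the same argument.
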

	
	\begin{proof}[Proof of \Cref{Lemma of spine decomposition}]
		According to the construction of $(\mathcal{T},V)$ unde $\tilde{P}_\xi$, conditional on $\mathcal{G}$, the conditional distribution of the subtree $\mathcal{T}_{\omega_k}$ is determined by $\Gamma_k(\xi)$. Also, by the space homogeneity and the definition of $\Phi$, the lemma is obvious.
	\end{proof}
	
	Combining with \cref{2.10,spine decomposition}, we have
	\begin{align}
		P_\xi(M\geqslant x)
		&=
		\tilde{P}_\xi\left[\frac{1}{W_{\mathcal{L}_x}(\lambda)}\right]
		\nonumber\\
		&=
		\tilde{P}_\xi\left[e^{S_{\tau_x}+\tau_x\Lambda_s(\lambda)-\lambda x}\left(1+\sum_{k=0}^{\tau_x-1} e^{\lambda (V(\omega_k)-x)- (k-\tau_x)\Lambda_s(\lambda)-(S_{k}-S_{\tau_x})}
		\Phi(\mathcal{T}_{\omega_k},V(\omega_k),x) \right)^{-1} \right]
		\nonumber\\
		&=
		\tilde{P}_\xi\left[ e^{R_x} B_x \right]\label{quenched express}
	\end{align}
	where $R_x:= S_{\tau_x}+\tau_x\Lambda_s(\lambda)-\lambda x $ and
	$B_x:= \left(1+\sum\limits_{k=0}^{\tau_x-1} e^{\lambda (V(\omega_k)-x)- (k-\tau_x)\Lambda_s(\lambda)-(S_{k}-S_{\tau_x})}
	\Phi(\mathcal{T}_{\omega_k},V(\omega_k),x) \right)^{-1} $. Note that,  $B_x$ is a positive and bounded random variable (in fact, $0<B_x\leqslant1$).
	
	By \cref{quenched express}, we construct the quenched size-biased probability distribution $\tilde{P}_\xi$, and use some associated random variables such as $R_x$ and $B_x$ to reformulate the quantity $P_\xi(M\geqslant x)$. Similarly to \cref{annealed prob}, there exists a probability measure $\tilde{\P}$ such that
	\begin{equation}
		\iint f(x,\xi) \tilde{\P}(\d x, \d \xi) = \iint f(x,\xi) \tilde{P}_\xi(\d x)P_E(\d \xi),
	\end{equation}
	and abuse use of notations, we also use $P_E\otimes \tilde{P}_\xi$ to represent $\tilde{\P}$.

	\subsection{An Annealed Coupling}\label{Section2.3}
	
	Note that, under $P_E\otimes \tilde{P}_\xi$, besides the two-side environment $\xi$, there is a trajectory of spine particles, denote by $\{V(\omega_0),\cdots,V(\omega_{\tau_x})\}$. For fixed $\lambda>0$, in annealed probability $P_E\otimes \tilde{P}_\xi$,  the trajectory and the environment are independent with each other. Furthermore, conditioned on the trajectory and the environment, the full branching system can be generated according to a suitable rule. This insight inspires us to consider constructing an annealed coupling. Also, inspired by our previous work \cite{Fu2025}, we can look the size-biased branching system standing at time $\tau_x$ and position $x$, and desire to find a suitable coupling.
	
	It is worth mentioning that for homogeneous branching systems, due to homogeneity, 
	no matter  the specific value of $\tau_x$, the branching mechanism remains identical when looking forward and backward from time $\tau_x$, therefore,
	it is not very hard to reconstruct the branching system (see \cite{Fu2025} for more details).
	However, for random environment setting, the time inhomogeneity brings an essential change, making it infeasible to construct a coupling directly under the quenched probability measure $\tilde{P}_\xi$.
	Therefore, we consider constructing a suitable coupling in the annealed sense.

	The construction of the annealed coupling is divided into four steps. 
	
	\begin{itemize}
		\item First, we specify a two-sided random environment. More precisely, we characterize the probabilistic structure of the environment as viewed from time $\tau_x$.
		\item Second, we determine the trajectory of the spine particles. This part is constructed mainly by the excursion theory.
		\item Third, we describe the whole branching system. To do such, we use the language of transfer measure (or can be translated into the language of conditional expectation). Formally, to obtain the whole branching system, conditional on environment and the trajectory of spine particles, we attach some subtrees to the spine.
		\item Finally, based on above constructions, we introduce the annealed coupling measure and several $\sigma$ fields. After that, we adopt the excursion theory and regard each excursion as a time scale. Then we treat space as a new time index and construct the random walk $\{R_x,x\geqslant 0\}$. Also, from the whole branching system, we construct a sequence of random variables $ \{B_x,x\geqslant0\} $. Moreover, we use these random variables to obtain a coupling lemma (i.e. \Cref{Couping Lemma}), which can be used to express the annealed probability.
	\end{itemize}  
	
	\noindent\textbf{Environment:}

	Under $P_E\otimes \tilde{P}_\xi$, the random time $\tau_x$ is measurable with respect to $\mathcal{G}$ (i.e. the $\sigma$ field generated only by the trajectory of spine particles) and independent with $\mathcal{F}_\infty:=\sigma(\xi)$ (i.e. the $\sigma$ field generated only by environment $\xi $). Thus, under the annealed probability, if we look the system at time $\tau_x$, the shifted environment $T_{\tau_x}\xi$ has the same distribution as $\xi$. Therefore, the annealed probability distribution of the environment viewed from time $\tau_x$ is exactly given by $P_E$.
	The reason for constructing a two-sided environment lies in that the behavior of the subtrees attach to the spine constructed subsequently is indeed affected by the environment before $\tau_x$ and after $\tau_x$.

	Therefore, we generate a two side random environment $\xi$ according to $P_E$, and denote by $\mathcal{F}_\infty:=\sigma(\xi)$ the $\sigma$ field generated by the environment.
	
	\
	
	\noindent\textbf{The Trajectory of Spine Particles:}
	
	Recall that, under $\tilde{P}_\xi$, the size-biased branching system has a spine. When looking the trajectory of spine at time $\tau_x$ and position $x$, the trajectory can be reconstructed by excursion theory. Moreover, for different $x$, these trajectories admit a unified treatment in distribution.
	
	Let $\{H_0,H_1,\cdots\}$ be a random walk with one step distribution $e^{\lambda y-\Lambda_s(\lambda y)} \mu(\d y)$. Define $\theta:= \inf\{k\geqslant1:H_k=1\}$, and we have \begin{equation}\label{mean of theta}\E[\theta] = \left( \int ye^{\lambda y-\Lambda_s(\lambda y)} \mu(\d y) \right)^{-1} = \Lambda^\prime_s(\lambda)^{-1}<\infty.\end{equation} From the excursion $\left(H_{j},0\leqslant j\leqslant \theta\right)$, we denote by
	\begin{equation}\label{excursion}
		\varepsilon:=\left(H_{\theta-j}-H_{\theta}=H_{\theta-j}-1,0\leqslant j\leqslant \theta\right)
	\end{equation}
	the inverse excursion.
	
	Let $\{\varepsilon_m:= (\varepsilon_m(j),0\leqslant j\leqslant\theta_m);m\geqslant1\}$ be a sequence of independent copies of $\varepsilon$. We construct an infinite trajectory $\{Y_0,Y_1,\cdots\} $ by connecting these excursions one by one. For a strict mathematical description, for any $x\geqslant1$, we define
	\begin{equation}\label{def of Tx}
		T_x: = \sum_{j=1}^{x}\theta_j,
	\end{equation}
	and set $T_0:=0$. Then the process
	\begin{equation}\label{infinity trajectory}
		Y_k:= -x+\varepsilon_{x+1}(k-T_x), \quad \text{for} \quad T_x<k\leqslant T_{x+1},
	\end{equation}
	with $Y_0:=0$, is the infinite trajectory what we construct.
	Above construction can trace back to Tanaka’s construction \cite{Tanaka1989}.
	From the infinity path $\textbf{Y}:=\{Y_0,Y_1,\cdots\}$, for any $k\leqslant0$, we define
	\begin{equation*}
		\overline{Y}_{k} := Y_{-k},
	\end{equation*}
	then we can obtain a infinity path $\overline{\textbf{Y}}:=\{\cdots,\overline{Y}_{-2},\overline{Y}_{-1},\overline{Y}_0=0\}$ defined on non-positive time.
	Also, according to above construction, for any integer $x\geqslant 0$, we have
	\begin{equation}\label{27}
		T_x = \sup\{k\geqslant 0: Y_k = -x\} = -\inf\{k\leqslant 0:\overline{Y}_k = -x\}.
	\end{equation}
	Abuse use of notations, denote by $\mathcal{G}_x:=\sigma\left(\{\varepsilon_{k}:1\leqslant k \leqslant x\}\right)$ and  $\mathcal{G}:=\sigma\left(\overline{\textbf{Y}}\right)$, then $T_x$ is $\mathcal{G}_x$ measurable random variable. Also, denote by $P_{\overline{Y}}$ the corresponding probability of $\overline{\textbf{Y}}$.
	
	According to above construction, also see \cite{B2,Tanaka1989}, we have
	
	\begin{lemma}\label{Tanaka's construction}
		The distribution of $\left(\overline{Y}_{-T_x},\overline{Y}_{1-T_x},\cdots,\overline{Y}_{0}\right)$ under $P_{\overline{Y}}$
		is the same as the distribution of $(H_0-x,H_1-x,\cdots,H_{\tau_x}-x)$ under $Q_\lambda$, and the distribution of $\left(V(\omega_0)-x,V(\omega_1),\cdots,V(\omega_{\tau_x})-x\right)$ under $\tilde{P}_\xi$.
	\end{lemma}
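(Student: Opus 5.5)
The plan is to decompose the path $(H_0,\dots,H_{\tau_x})$ under $Q_\lambda$ into $x$ i.i.d.\ ladder pieces using the upward skip-free property, and then check that the time reversal of this decomposition is exactly Tanaka's concatenation of reversed excursions. The second identification, with the spine under $\tilde{P}_\xi$, then follows from what was already established in \Cref{Section2.2}. (In the statement, the first entry should of course read $V(\omega_0)-x$ and the others $V(\omega_j)-x$.)

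\textbf{Step 1: ladder decomposition.} Under \Cref{Jump Assumption} the walk is upward skip-free: its jumps are at most $1$. Under $Q_\lambda$ it has drift $\Lambda_s^\prime(\lambda)>0$, so every level $j\geqslant 1$ is reached in finite time, $Q_\lambda$-a.s. Set
\begin{equation*}
\tau_j:=\inf\{k\geqslant 1: H_k=j\},\qquad 1\leqslant j\leqslant x,
\end{equation*}
with $\tau_0:=0$. Because the walk cannot jump over a level, we have $0<\tau_1<\tau_2<\cdots<\tau_x$, and $\tau_j$ is also the first time the walk is $\geqslant j$, so $H_{\tau_j}=j$.

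By the strong Markov property at the stopping times $\tau_{j-1}$, the pieces
\begin{equation*}
\pi_j:=\bigl(H_{\tau_{j-1}+i}-(j-1),\ 0\leqslant i\leqslant \tau_j-\tau_{j-1}\bigr),\qquad 1\leqslant j\leqslant x,
\end{equation*}
are i.i.d. Each $\pi_j$ has the law of $(H_i,0\leqslant i\leqslant\theta)$, where $\theta$ is the first hitting time of $1$.

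\textbf{Step 2: time reversal.} Reverse the whole path and shift it by $x$:
\begin{equation*}
G_j:=H_{\tau_x-j}-x,\qquad 0\leqslant j\leqslant\tau_x.
\end{equation*}
Reading $G$ forward traverses the pieces in the order $\pi_x,\pi_{x-1},\dots,\pi_1$, each one reversed. On the time block of the $m$-th reversed piece (the one coming from $\pi_{x-m+1}$), the path is
\begin{equation*}
G=-(m-1)+\bigl(\pi_{x-m+1}(\theta_{x-m+1}-i)-1\bigr),
\end{equation*}
which is $-(m-1)$ plus the reversed excursion $\varepsilon$ of \cref{excursion} built from $\pi_{x-m+1}$.

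The family $(\pi_x,\dots,\pi_1)$ is again i.i.d.\ with the law of $\pi_1$, since reindexing an i.i.d.\ family preserves its joint law. Hence the reversed pieces form $x$ i.i.d.\ copies of $\varepsilon$. Comparing with \cref{def of Tx} and \cref{infinity trajectory}, $(G_j)_{0\leqslant j\leqslant\tau_x}$ has the law of $(Y_j)_{0\leqslant j\leqslant T_x}$, and in particular $\tau_x\overset{d}{=}T_x$. Since $Y_j=\overline{Y}_{-j}$, this is the same as saying that $(\overline{Y}_{-T_x},\dots,\overline{Y}_0)$ read backward matches $(H_0-x,\dots,H_{\tau_x}-x)$ read backward, which gives the first claimed identity. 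One should also check, via \cref{27}, that $T_x$ is exactly the final time of the $x$-th block, so that the index ranges agree.

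\textbf{Step 3: the spine.} As noted after \cref{first hitting time}, under $\tilde{P}_\xi$ the spine trajectory $(V(\omega_k))_k$ has the law of $(H_k)_k$ under $Q_\lambda$, independently of $\xi$. The hitting time $\tau_x$ in \cref{first hitting time} is defined with ``$\geqslant x$'', but by the absence of overshoot (\Cref{R2}) it coincides with the ``$=x$'' hitting time of \Cref{Section2.1}. Applying the measurable path map $(h_k)_k\mapsto(h_k-x)_{k\leqslant\tau_x(h)}$ to both processes gives the second identity.

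The only delicate point is the bookkeeping in Step 2: matching the index shifts and the $-1$ normalisation in $\varepsilon$ against \cref{infinity trajectory}, and noting explicitly that reversing the order of the pieces does not change their joint law. The probabilistic content is just the strong Markov property combined with skip-freeness.
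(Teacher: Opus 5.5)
Your proposal is correct and takes essentially the same route as the paper, which gives no argument beyond pointing to the construction and to Tanaka's time reversal (\cite{Tanaka1989}, \cite{B2}). Your ladder decomposition (upward skip-freeness plus the strong Markov property), followed by reversing the order of the i.i.d.\ pieces and matching each reversed piece with a copy of $\varepsilon$ in \cref{infinity trajectory}, is exactly that construction made explicit. Your corrections to the statement are also right: every entry should read $V(\omega_j)-x$, and the ``$\geqslant x$'' and ``$=x$'' hitting times coincide by \Cref{R2}. Note that Step 3 only needs the law of the spine up to $\tau_x$, and that is precisely what the stopping-line measure change provides.
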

	
	By \Cref{Tanaka's construction}, the infinite trajectory $\overline{\textbf{Y}}$ constructed herein coincides with the spine trajectory under the original measure, differing only in the observation perspective in time and position.
	
	\
	
	\noindent\textbf{Attach the subtrees to the spine (in the language of transfer measure):}
	
	Given a two-side environment $\xi$ and an infinity trajectory $\overline{\textbf{Y}}$ (we use $P_{\xi,\overline{Y}}$ denote the transfer probability described as follow), for any $k\leqslant -1$, at time $k$, we drive a time-inhomogeneous branching random walk which is rooted at $(k,\overline{Y}_k)$ and denoted by $\mathcal{T}_k$. The branching system $\mathcal{T}_k$ behaves as follow:
	
	\begin{itemize}
		\item Initially, at generation $k$, a single particle is located at $\overline{Y}_{k}$. 
		\item Initial particle dies, and then reproduces several children, whose distribution is the same as $D_k-1$, where $D_k$ is a random variable with distribution given by $F^S_{k+1}$ (the size-biased distribution of $F_{k+1}$, which is defined in \cref{size-biased distribution}).
		After that, every new particle moves according to the jump distribution $\mu(\d x)$ respectively to its parent.
		\item In generation $t$ ($t> k$), every particle behaves as usual, i.e. reproduces independently according to $F_{t+1}$, and offspring particles jump independently according to $\mu(\d x)$.
	\end{itemize}
	
	From any $k\leqslant-1$, we just attach $\mathcal{T}_k$ to the infinite path $\overline{Y}$ at time $k$. Formally, after that, we connect these subtrees together and form a bigger branching system $\mathcal{T}:= \bigcup\limits_{k=-\infty}^{-1} \mathcal{T}_k $.
	For any $\nu \in \mathcal{T}$, denote by $|\nu|$ the time where $\nu$ survives in, 
	and for any $s<|\nu|$, denote by $\nu(s)$ the only ancestor particle of $\nu$ at generation $s$.
	Define an optional line $\mathcal{L}_0$ such that for any $\nu\in\mathcal{T}$,
	\begin{equation}
		\mathcal{L}_0(\nu) = 1 \quad \text{if and only if} \quad 
		V(\nu) = 0 ~\text{and}~ V(\nu(s)) <0,\forall s<|\nu|.
	\end{equation}
	For any $ k\leqslant -1$, with the optional line $\mathcal{L}_0$, define
	\begin{equation}\label{def of Phik}
		\Phi(\mathcal{T}_k,\overline{Y}_k,0):= \sum\limits_{\nu \in \mathcal{T}_k, \mathcal{L}_0(\nu)=1} e^{-\lambda\overline{Y}_k-(|\nu|-k)\Lambda_s(\lambda)-(S_{|\nu|}-S_k)}.
	\end{equation}

	\begin{lemma}\label{mean of Phi}
		Given $\xi$ and $\overline{Y}$, any $k\leqslant -1$, under $P_{\xi,\overline{Y}}$, the distribution of $\Phi(\mathcal{T}_k,\overline{Y},0)$ depends only on the pair $(\overline{Y}_k,\Gamma_k(\xi))$, where $\Gamma_k$ is defined in \cref{cut off operator}, and for different $k$, these random variables are independent with each other.
		And if the pair $ (\overline{Y}_k,\Gamma_k(\xi)) $ is equal to the pair $ (H_{\omega_k}-x,\Gamma_k(\xi)) $ in \Cref{Lemma of spine decomposition}, then these two random variables has the same conditional distribution.
		Moreover, we have
		\begin{equation}\label{eq.mean of phi}
			P_{\xi,\overline{Y}} \left[\Phi(\mathcal{T}_k,\overline{Y}_k,0)\right] = \eta_{k+1}.
		\end{equation}
	\end{lemma}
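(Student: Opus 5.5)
The plan is to treat the structural claims (dependence only on $(\overline{Y}_k,\Gamma_k(\xi))$, independence over $k$, and equality in law with the spine-decomposition summands) as direct consequences of the construction under $P_{\xi,\overline{Y}}$. The only real work is the mean identity \eqref{eq.mean of phi}, which I would get from a first-generation decomposition of $\mathcal{T}_k$ combined with the optional-line identity \eqref{optional line}.

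\textbf{Structural part.} Under $P_{\xi,\overline{Y}}$ the subtree $\mathcal{T}_k$ is generated as follows. Its root sits at $\overline{Y}_k$ at time $k$. It has $D_k-1$ children with $D_k\sim F^S_{k+1}$, and afterwards particles reproduce according to $F_{k+2},F_{k+3},\dots$ with i.i.d. $\mu$-jumps. This uses only $\overline{Y}_k$ and $\Gamma_k(\xi)=(F_{k+1},F_{k+2},\dots)$.

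The weight in \eqref{def of Phik} involves only $-\overline{Y}_k$, the relative generation $|\nu|-k$, and the increments $S_{|\nu|}-S_k=\sum_{j=k+1}^{|\nu|}X_j$. These are all functions of $\Gamma_k(\xi)$ and of $\mathcal{T}_k$ shifted by $\overline{Y}_k$, using spatial homogeneity. The same holds for the event $\{\mathcal{L}_0(\nu)=1\}$ restricted to $\nu\in\mathcal{T}_k$, because $V(\nu(s))<0$ for $s\leqslant k$ is automatic from $\overline{Y}_s<0$.

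Independence over $k$ holds because the families $\mathcal{T}_k$ are driven by independent randomness given $(\xi,\overline{Y})$. Comparing with \Cref{Lemma of spine decomposition}: there the normal subtree at $\omega_k$ is also produced by $D-1$ non-spine children of a size-biased $F^S_{k+1}$ individual, followed by ordinary branching. Translating by $-x$ maps $\mathcal{L}_x$ to $\mathcal{L}_0$ and $x-V(\omega_k)$ to $-\overline{Y}_k$. Hence the two $\Phi$'s have the same conditional law whenever the pairs agree.

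\textbf{Mean.} Condition on the first generation of $\mathcal{T}_k$. Let $c$ be one of the $D_k-1$ children, at position $\overline{Y}_k+h_c$ with $h_c\sim\mu$. Since $\overline{Y}_k\leqslant -1$ and $h_c\leqslant 1$, we have $\overline{Y}_k+h_c\leqslant 0$, and the child lies on $\mathcal{L}_0$ exactly when equality holds. Factor the weight of any descendant $\nu$ of $c$ on $\mathcal{L}_0$ as
\[
e^{-\lambda\overline{Y}_k-(|\nu|-k)\Lambda_s(\lambda)-(S_{|\nu|}-S_k)}
= e^{\lambda h_c-\Lambda_s(\lambda)-X_{k+1}}\cdot e^{-\lambda(\overline{Y}_k+h_c)-(|\nu|-k-1)\Lambda_s(\lambda)-(S_{|\nu|}-S_{k+1})}.
\]
The sum of the second factor over the descendants of $c$ on $\mathcal{L}_0$ is exactly the optional-line variable $W_{\mathcal{L}}(\lambda)$ of \eqref{optional line}. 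It is computed for a branching random walk in environment $T_{k+1}\xi$, started at $\overline{Y}_k+h_c$, with the line at level $0$. By \eqref{optional line}–\eqref{martingale l1 convergence} it has mean $1$; the trivial case $\overline{Y}_k+h_c=0$ also gives $1$.

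Taking expectations over $h_c$ gives $\E_\mu[e^{\lambda h}]e^{-\Lambda_s(\lambda)}=1$. Hence
\[
P_{\xi,\overline{Y}}[\Phi]=e^{-X_{k+1}}\,E[D_k-1]=\frac{1}{\overline{F}_{k+1}}\cdot\frac{\sum_j j(j-1)F_{k+1}[j]}{\overline{F}_{k+1}}=\eta_{k+1}.
\]

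\textbf{Main obstacle.} The delicate point is justifying that the optional-line mean equals $1$ rather than merely $\leqslant 1$, which is all Fatou gives. This needs the line $\mathcal{L}_0$ to be a.s. hit by the spine under the size-biased measure, so that $W_{\mathcal{L}^n}$ is uniformly integrable as in \eqref{martingale l1 convergence}. I would argue this via the spine representation. Under the tilted measure the spine is a random walk with drift $\Lambda_s'(\lambda)>0$ that is independent of the environment. It therefore reaches level $0$ a.s. with no overshoot, by \Cref{Jump Assumption}. This makes the Biggins–Kyprianou results (Lemmas 14.1, 14.3 in \cite{Biggins2004}) applicable for $P_E$-a.e. environment.
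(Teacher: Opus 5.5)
Your proposal is correct and follows essentially the paper's route. The paper applies the Biggins--Kyprianou optional-line identity to the additive martingale $\overline{W}_k(t)$, $t\geqslant k+1$, and obtains $P_{\xi,\overline{Y}}[\Phi]=P_{\xi,\overline{Y}}[\overline{W}_k(k+1)]=\eta_{k+1}$; this is your first-generation decomposition, except that you spell out the several-children start and the case of a child landing directly at $0$. Your argument that each child's optional-line variable has mean exactly $1$ is sound, and it can be made fully elementary by the many-to-one formula, which gives mean $Q_\lambda(\tau<\infty)=1$ for every environment.
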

	
	\begin{proof}[Proof of \Cref{mean of Phi}]
		According to the construction of $\mathcal{T}_k$ in this subsection and $\mathcal{T}_{\omega_k}$ in \cref{37}, the first part of the lemma is obvious.
		
		About \cref{eq.mean of phi}, for $\mathcal{T}_k$, for any $t\geqslant k+1$, we define
		\begin{equation}
			\overline{W}_k(t):=\sum_{\nu\in \mathcal{T}_k,|\nu|=t}
			e^{\lambda(V(\nu)-\overline{Y}_k) - (t-k)\Lambda_s(\lambda)-(S_t-S_k)}.
		\end{equation}
		
		By the branching property,
		it is not hard to verified that $\{\overline{W}_k(t);t\geqslant k+1\}$ is an additive martingale with respectively to the natural filtration generated by the branching system $\mathcal{T}_k$.
		And according to the branching mechanism of $\mathcal{T}_k$ at time $k$, we have
		\begin{equation}
			P_{\xi,\overline{Y}} [\overline{W}_k(k+1)] = \eta_{k+1}.
		\end{equation}
		
		Note that $\mathcal{L}_0$ is a simple optional line for $\mathcal{T}_k$. Therefore,
		according to Lemma 6.1, 14.1 and 14.3 in \cite{Biggins2004} (i.e. the same arguments appeared in \Cref{Section2.2}),
		and we have \[\Phi(\mathcal{T}_k,\overline{Y}_k,0) = \overline{W}_{\mathcal{L}_0} = \sum_{\nu\in\mathcal{T}_k} \mathcal{L}_0(\nu) e^{\lambda(V(\nu)-\overline{Y}_k)-(|\nu|-k)\Lambda_s(\lambda)-(S_{|\nu|}-S_k)}. \]
		Hence, for $\lambda>0$, we have
		\[ \quad P_{\xi,\overline{Y}} [\Phi(\mathcal{T}_k,\overline{Y}_k,0)] = P_{\xi,\overline{Y}} [\overline{W}_k(k+1)] = \eta_{k+1}.
		\]
		
		So far, the lemma is proved.
	\end{proof}

	\noindent\textbf{Annealed coupling measure, several $\sigma$ fields and random variables:}
	
	From $P_E, P_{\overline{Y}}$ and $P_{\xi,\overline{Y}}$, there exist a probability measure $\overline{\P}$  such that for any positive and finite function $f$, we have
	\begin{equation}\label{eq61}
		\iiint f(\xi,\overline{Y},u) \overline{\P}(\d u,\d \xi,\d \overline{Y})
		=
		\iiint f(\xi,\overline{Y},u) P_{\xi,\overline{Y}}(\d u) P_E(\d \xi) P_{\overline{Y}} (\d \overline{Y}).
	\end{equation}
	And to avoid using too much notation, also, abuse use of notations, we use $ P_E\otimes P_{\overline{Y}} $ to represent the annealed coupling measure $\overline{\P}$.
	
	Under the annealed coupling measure $P_E\otimes P_{\overline{Y}}$, 
	for any integer $x\geqslant 0$, define
	\begin{equation}\label{def of Hx}
		\mathcal{H}_x:= \sigma\left(\{ \mathcal{G}_x,\{ F_j:j > -T_x \},\{\mathcal{T}_j:j>-T_x\} \}\right), \quad \mathcal{H}_\infty := \sigma\{\mathcal{H}_x:x\geqslant 0\}.
	\end{equation} 
	For any $m\geqslant 1$, denote by
	\begin{equation}\label{def of zeta}\zeta_m:=(F_{1-T_m},\cdots,F_{-T_{m-1}},\overline{Y}_{1-T_m}+m-1,\cdots,\overline{Y}_{-T_{m-1}}+m-1)
	\end{equation}
	the $m$-th excursion of $\overline{Y}$ and the environments on this interval.
	According to the independence between $\overline{Y}$ and $\xi$ under product measure $P_E\otimes P_{\overline{Y}}$, and by the construction about $\overline{Y}$, it is not hard to find that
	$\zeta_m$ are i.i.d random elements. Moreover, $\zeta_m$ is $\mathcal{H}_m$ measurable and independent with $\mathcal{H}_{m-1}$.
	
	\
	
	Following, we construct several random variables.
	For any $k\leqslant0$, we define
	\begin{equation}\label{def of Jk}
		J_k := \sum_{i=k+1}^{0} X_i -k\Lambda_s(\lambda_1)+\lambda \overline{Y}_{k}= -S_k-k\Lambda_s(\lambda)+\lambda \overline{Y}_k.
	\end{equation}
	And for any $x\geqslant1$, define
	\begin{equation}\label{def of R_x}
		R_x:= J_{-T_x} =  \sum_{i=-T_x+1}^{0} X_i + T_x\Lambda_s(\lambda)-\lambda x,
	\end{equation}
	with $R_0:=0$.
	Note that, $ R_x-R_{x-1} $ is a function about excursion $\zeta_{x}$. Therefore, under $P_E\otimes P_{\overline{Y}}$,
	$\{R_x:x\geqslant 0\}$ is a random walk respectively to $\mathcal{H}_x$ (i.e. for any $x\geqslant0$, $R_x$ is $\mathcal{H}_x$ measurable, and $R_{x+1}-R_x$ is independent with $\mathcal{H}_x$).
	
	Also, under $P_E\otimes P_{\overline{Y}}$, from the whole branching system $\mathcal{T}$, we define
	\begin{equation}\label{def of overlineA}
		B_x:= \left(1+\sum_{k=-T_x}^{-1}e^{J_x}\Phi(\mathcal{T}_k,\overline{Y}_k,0)\right)^{-1},
	\end{equation}
	where $T_x$ is defined in \cref{def of Tx} and has the property in \cref{27}, $\Phi(\mathcal{T}_k,\overline{Y}_k,0)$ is defined in \cref{def of Phik}, and $J_x$ is defined in \cref{def of Jk}.
	
	\begin{lemma}\label{Couping Lemma}(Coupling Lemma)
		For any integer $x\geqslant 1$,
		the law of $(B_x,R_x)$ under $P_E\otimes \tilde{P}_\xi$ is the same as the law of $(B_x,R_x)$ under $P_E\otimes P_{\overline{Y}}$.
		And in particular, we have
		\begin{equation}
			P_E\otimes P_{\xi}(M\geqslant x) = 
			P_E\otimes \tilde{P}_{\xi} (e^{R_x}B_x)
			=
			P_E\otimes P_{\overline{Y}}\left( e^{R_x} B_x \right).
		\end{equation}
	\end{lemma}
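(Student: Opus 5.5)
The coupling lemma is proved by a shift-of-time argument under the annealed measure. Everything in $(B_x,R_x)$ under $P_E\otimes\tilde P_\xi$ is built from four ingredients: the spine path up to $\tau_x$, the environment increments, the subtree functionals $\Phi$, and the hitting time $\tau_x$ itself. We re-index time by $k\mapsto k-\tau_x$ and space by $y\mapsto y-x$. In the new coordinates the target is at $(0,0)$, and we show that the joint law of these ingredients matches the coupled construction under $P_E\otimes P_{\overline Y}$. Once this is done, the identity $P_E\otimes P_\xi(M\geqslant x)=P_E\otimes\tilde P_\xi(e^{R_x}B_x)$ follows by integrating the quenched formula \eqref{quenched express} against $P_E$. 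The second equality is then immediate from equality in law, since $e^{R_x}B_x$ is a measurable function of $(R_x,B_x)$.

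\textbf{Step 1 (spine and environment).} Under $P_E\otimes\tilde P_\xi$, the spine path is independent of $\xi$ and has the law of $H$ under $Q_\lambda$. Since $\tau_x$ is $\mathcal G$-measurable and $\xi$ is i.i.d. and two-sided, conditionally on the spine the shifted environment $T_{\tau_x}\xi$ is again distributed as $P_E$. Combining this with \Cref{Tanaka's construction}, we conclude that the pair
\[
\bigl(T_{\tau_x}\xi,\ (V(\omega_{\tau_x+j})-x)_{-\tau_x\le j\le 0}\bigr)
\]
has the same law as $\bigl(\xi,(\overline Y_j)_{-T_x\le j\le 0}\bigr)$ under $P_E\otimes P_{\overline Y}$, with $\tau_x$ corresponding to $T_x$. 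Under this identification,
\[
S_{\tau_x}=\sum_{i=1}^{\tau_x}X_i
\quad\text{becomes}\quad
\sum_{i=-T_x+1}^{0}X_i .
\]
Hence $R_x=S_{\tau_x}+\tau_x\Lambda_s(\lambda)-\lambda x$ is mapped exactly to the definition \eqref{def of R_x}. Similarly, each weight in $B_x$,
\[
\lambda(V(\omega_k)-x)-(k-\tau_x)\Lambda_s(\lambda)-(S_k-S_{\tau_x}),
\]
with $k'=k-\tau_x$, becomes $\lambda\overline Y_{k'}-k'\Lambda_s(\lambda)-S_{k'}=J_{k'}$ as in \eqref{def of Jk}. (The exponent in \eqref{def of overlineA} should be read as $J_k$.)

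\textbf{Step 2 (subtrees).} Conditionally on $\mathcal G$ and $\xi$, \Cref{Lemma of spine decomposition} tells us that the variables $\Phi(\mathcal T_{\omega_k},V(\omega_k),x)$ are independent. Moreover, the law of each depends only on the pair $(x-V(\omega_k),\Gamma_k\xi)$. By \Cref{mean of Phi}, under $P_{\xi,\overline Y}$ the variables $\Phi(\mathcal T_{k'},\overline Y_{k'},0)$ are also conditionally independent, with laws depending only on $(\overline Y_{k'},\Gamma_{k'}\xi)$, and these conditional laws coincide when the pairs match. After the shift, $\Gamma_k\xi$ becomes $\Gamma_{k'}(T_{\tau_x}\xi)$ and $x-V(\omega_k)$ becomes $-\overline Y_{k'}$, so the pairs do match. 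Therefore the full joint law of
\[
\bigl(\text{environment},\ \text{spine},\ (\Phi_k)_{k}\bigr)
\]
agrees under the two measures. Since $(R_x,B_x)$ is the same measurable functional of these objects in both settings, the equality in law follows.

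\textbf{Main obstacle.} The delicate point is Step 1: justifying that viewing the environment from the random time $\tau_x$ preserves $P_E$ jointly with the spine. This holds because $\tau_x$ depends only on the spine, which is independent of $\xi$, so one can condition on $\tau_x=n$ and use the shift invariance of the i.i.d. two-sided sequence. A second point needs care: $\Phi$ in \cref{37} uses the environment only after time $k$, so $\Gamma_k$ involves indices both before and after $\tau_x$. Both are covered by the two-sided environment.
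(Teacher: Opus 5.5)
Your proposal is correct and follows essentially the same route as the paper's proof. You condition on $\tau_x=k$, use the independence of the spine from $\xi$, shift invariance of the i.i.d.\ two-sided environment and \Cref{Tanaka's construction} to identify the law of $(T_{\tau_x}\xi,\,(V(\omega_j)-x)_{j\le\tau_x})$ with that of $(\xi,(\overline Y_j)_{-T_x\le j\le 0})$, and then invoke \Cref{Lemma of spine decomposition} and \Cref{mean of Phi} to match the subtree functionals, so that $(R_x,B_x)$ is the same measurable functional in both settings; your explicit check that the weights become $J_{k'}$ (reading $e^{J_x}$ in \eqref{def of overlineA} as $e^{J_k}$) supplies details the paper leaves implicit.
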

	
	\begin{proof}[Proof of \Cref{Couping Lemma}]
		By \Cref{Tanaka's construction}, we know that $\left(V(\omega_0)-x,V(\omega_1)-x,\cdots,V(\omega_{\tau_x})-x\right)$ has the same distribution as $ ( \overline{Y}_{-T_x},\overline{Y}_{1-T_x},\cdots,\overline{Y}_0) $. Then for any admissible $k$, and a measurable set $C$ of $\mathcal{P}(\N_0)^{\mathbb{Z}}$, and some measurable sets $A_{-k},A_{1-k},\cdots,A_0$ of $\R$, 
		and by the independence between $\xi$ and the trajectory of spine particles $\left(V(\omega_0),V(\omega_1),\cdots,V(\omega_{\tau_x})\right)$, we have
		\begin{align}
			&P_E\otimes \tilde{P}_\xi (T_k\xi \in C,V(\omega_0)-x \in A_{-k},V(\omega_1)-x\in A_{1-k},\cdots,V(\omega_{\tau_x})-x\in A_0|\tau_x=k)
			\nonumber\\
			=&
			P_E(T_k\xi \in C)
			\times
			P_E\otimes \tilde{P}_\xi \left(
			V(\omega_1)-x\in A_{1-k},\cdots,V(\omega_{\tau_x})-x\in A_0|\tau_x=k
			\right)
			\nonumber\\
			=&
			P_E(T_k\xi \in C)
			\times
			P_{\overline{Y}}
			\left(
			\overline{Y}_{-T_x}\in A_{-k},\overline{Y}_{1-T_x}\in A_{1-k},\cdots,\overline{Y}_0\in A_0|T_x=k
			\right)
			\nonumber\\
			=&
			P_E\otimes
			P_{\overline{Y}}
			\left(T_k\xi \in C,
			\overline{Y}_{-T_x}\in A_{-k},\overline{Y}_{1-T_x}\in A_{1-k},\cdots,\overline{Y}_0\in A_0|T_x=k
			\right)
		\end{align}
		Also, the distribution of $\tau_x$ under $P_E\otimes P_\xi$ is equal to the distribution of $T_x$ under $P_E\otimes P_{\overline{Y}}$.
		Therefore, by the law of total probability, we can deduce that
		the law of $(T_{\tau_x}\xi,V(\omega_0)-x,V(\omega_1)-x,\cdots,V(\omega_{\tau_x})-x)$ under $P\otimes \tilde{P}_\xi$ is equal
		the law of $(\xi,\overline{Y}_{-T_x},\overline{Y}_{1-T_x},\cdots,\overline{Y}_0)$ under $P_E\otimes P_{\overline{Y}}$.
		Therefore, by \Cref{Lemma of spine decomposition} and \Cref{mean of Phi}, and according to the constructions of $B_x$ under $P_E\otimes P_{\overline{Y}}$ and $P_E\otimes \tilde{P}_{\xi}$, the lemma is proved.
	\end{proof}
	
	\section{Proof of Main Results}\label{Section4}
	
	\subsection{Proof of \Cref{Quenched theorem}}
	
	For associated random walk $\{S_0,S_1,\cdots\}$ of environment $\xi$ (see \cref{quenched random walk}), 
	for any $n\geqslant1$, define the event
	\begin{equation*}
		\Delta_n:=\{|S_n-na|>n^{\frac{2}{3}}\}.
	\end{equation*}
	Under \Cref{subcritical}, by the moderate deviation principle (see Theorem 3.7.1 in \cite{LDP1998}), we know there exists some positive number $\theta>0$ such that
	\begin{equation*}
		\lim\limits_{n\rightarrow\infty} n^{-\frac{1}{3}}\log P_E(\Delta_n) = -\theta.
	\end{equation*}
	Thus, we have
	\begin{equation*}
		\sum_{n=1}^{\infty} P_E(\Delta_n)<\infty.
	\end{equation*}
	Therefore, by Borel-Cantelli Lemma, we have
	\begin{equation*}
		P_E\left(
		\bigcap\limits_{m=1}^\infty\bigcup\limits_{n=m}^\infty \Delta_n
		\right) = 0,
	\end{equation*}
	which implies that for $P$ almost surely environment $\xi$, there exists some $N(\xi)$ (depending on $\xi$), such that
	\begin{equation}\label{addition 1}
		|S_n-na|<n^{\frac{2}{3}}, \quad \forall n>N(\xi).
	\end{equation}
	
	In \cref{quenched express}, take $\lambda = \lambda_q = \kappa(-a)$, i.e. $\Lambda_s(\lambda_q) = -a$, then we have $R_x:= S_{\tau_x} -a\tau_x -\lambda_q x$.
	According to the fact $ \tau_x\geqslant x$ (under \cref{Jump Assumption}) and $B_x\leqslant 1$, for $x>N(\xi)$ large enough, by \cref{addition 1}, we have
	\begin{equation}\label{addition 2}
		e^{\lambda_q x} P_\xi(M\geqslant x)
		=
		e^{\lambda_q x} \tilde{P}_\xi\left[
		e^{R_x} B_x
		\right]
		\leqslant \tilde{P}_\xi 
		\left[
		\exp\left\{\tau_x^{\frac{2}{3}}\right\}
		\right].
	\end{equation}
	About the right side of above inequality, for any $\omega>0$, we have
	\begin{align}
		\tilde{P}_\xi 
		\left[
		\exp\left\{\tau_x^{\frac{2}{3}}\right\}
		\right]&\leqslant\tilde{P}_\xi 
		\left[
		\exp\left\{\omega \tau_x\right\}\right]
		+
		\tilde{P}_\xi\left[\exp\left\{\tau_x^{\frac{2}{3}}\right\};\tau_x^{\frac{2}{3}}\geqslant \omega \tau_x\right]
		\nonumber\\
		&= \tilde{P}_\xi 
		\left[
		\exp\left\{\omega \tau_x\right\}\right] + \tilde{P}_\xi\left[\exp\left\{\tau_x^{\frac{2}{3}}\right\};\tau_x\leqslant \omega^{-3}\right]
		\nonumber\\
		&\leqslant
		\tilde{P}_\xi 
		\left[
		\exp\{\omega \tau_x\}\right] + e^{\omega^{-2}}.
		\label{addition 3}
	\end{align}
	And about $\tilde{P}_\xi 
	\left[
	\exp\{\omega \tau_x\}\right]$, under \Cref{Jump Assumption}, by the strong Markov property of a random walk, we know that under $\tilde{P}_\xi$,  $\{\tau_s-\tau_{s-1};s\geqslant 1\}$ is a sequence of i.i.d random variables. Then we have
	\begin{align*}
		\tilde{P}_\xi 
		\left[
		\exp\{\omega \tau_x\}\right] 
		=
		\tilde{P}_\xi 
		\left[ 
		\exp\left\{\omega \sum\limits_{s=1}^x (\tau_s-\tau_{s-1})\right\}\right]
		=
		\left(\tilde{P}_\xi 
		\left[ 
		\exp\left\{\omega \tau_1\right\}\right]\right)^x.
	\end{align*}
	Therefore, 
	\begin{equation*}
		\frac{1}{x}\log \tilde{P}_\xi 
		\left[
		\exp\left\{\omega \tau_x\right\}\right] = \log \tilde{P}_\xi 
		\left[ 
		\exp\left\{\omega \tau_1\right\}\right].
	\end{equation*}
	Note that, under $\tilde{P}_\xi$, the random walk $\left( V(\omega_0),V(\omega_1),\cdots \right)$ has the same distribution as the random walk $(H_0,H_1,\cdots)$ under $ Q_\lambda $ which is defined in \Cref{Section2.1}.
	Therefore, we know that for some $\omega>0$ small enough such that
	\begin{equation*}
		\tilde{P}_\xi 
		\left[ 
		\exp\left\{\omega \tau_1\right\}\right]<\infty.
	\end{equation*}
	By dominated convergence theorem, we have
	\begin{equation*} 
		\lim\limits_{\omega\rightarrow 0^+}  \log \tilde{P}_\xi 
		\left[ 
		\exp\left\{\omega \tau_1\right\}\right] = 0,
	\end{equation*}
	which implies that
	\begin{equation} \label{addition 4}
		\lim\limits_{\omega\rightarrow 0^+} \lim\limits_{x\rightarrow\infty}\frac{1}{x}\log \tilde{P}_\xi 
		\left[
		\exp\{\omega \tau_x\}\right] = 0.
	\end{equation}
	By \cref{addition 4}, due to the arbitrariness of $\omega$ in \cref{addition 3}, we can deduce that
	\begin{equation}
		\lim\limits_{x\rightarrow\infty} \frac{1}{x}\log\tilde{P}_\xi 
		\left[
		\exp\left\{\tau_x^{\frac{2}{3}}\right\}
		\right] =0.
	\end{equation}
	Therefore, together with \cref{addition 2}, we have
	\begin{equation}\label{addition 10}
		\limsup\limits_{x\rightarrow\infty}\frac{1}{x}\log P_\xi(M\geqslant x)+\lambda_q \leqslant 0, \quad P_E~a.s..
	\end{equation}
	
	For lower bound, also, for any $x>N(\xi)$, we have
	\begin{align}\label{addition 9}
		e^{\lambda_q x} P_\xi (M\geqslant x)
		\geqslant
		P_\xi\left[ \exp\left\{-\tau_x^{\frac{2}{3}}\right\} B_x\right]
	\end{align}
	By Cauchy inequality, we have
	\begin{equation}\label{addition 7}
		P_\xi\left[ \exp\left\{-\tau_x^{\frac{2}{3}}\right\} B_x\right]
		P_\xi\left[B_x^{-1}\right] \geqslant
		P_\xi\left[ \exp\left\{-\frac{1}{2}\tau_x^{\frac{2}{3}}\right\} \right]^2
	\end{equation}
	About $P_\xi\left[ \exp\left\{-\frac{1}{2}\tau_x^{\frac{1}{3}}\right\} \right]$, by the same arguments in \cref{addition 3,addition 4}, it is not hard to verify that
	\begin{equation}\label{addition 8}
		\lim\limits_{x\rightarrow\infty} \frac{1}{x}\log P_\xi\left[ \exp\left\{-\frac{1}{2}\tau_x^{\frac{2}{3}}\right\} \right] = 0.
	\end{equation}
	About $P_\xi\left[B_x^{-1}\right]$, according to the construction on $B_x$, and by \Cref{mean of Phi}, we have
	\begin{align}
		P_\xi[B_x^{-1}|\mathcal{G}] &= 1+\sum_{k=0}^{\tau_x-1} \eta_{k+1}e^{\lambda(H_k-x)+(k-\tau_x)a-(S_k-S_{\tau_x})}
		\nonumber\\
		&\leqslant
		1+e^{S_{\tau_x}-a\tau_x}\sum_{k=0}^{\tau_x-1} \eta_{k+1}e^{ka-S_k}\nonumber\\
		&\leqslant
		1+\exp\left\{\tau_x^{\frac{2}{3}}\right\} \left(\sum_{k=0}^{N(\xi)} \eta_{k+1}e^{ka-S_k}+\sum_{k=N(\xi)+1}^{\tau_x-1} \eta_{k+1}e^{k^{\frac{1}{3}}}\right)
		\nonumber\\
		&\leqslant
		1+ M(\xi) \exp\{\tau_x^{\frac{2}{3}}\}
		+\exp\left\{\tau_x^{\frac{2}{3}}\right\} \left(\sum\limits_{k=1}^{\tau_x} \eta_{k}\right)\label{addition 5}
	\end{align}
	where $M(\xi):= \sum\limits_{k=0}^{N(\xi)} \eta_{k+1}e^{ka-S_k}<\infty$ depends only on the environment $\xi$.
	
	About $\sum_{k=1}^{n} \eta_{k}$, under the \cref{addition assumption for quenchde probability} in \Cref{Quenched theorem}, by the law of large number, we know that there exists some large $N_2(\xi)$, such that
	\[
	\sum_{k=1}^{n} \eta_{k} \leqslant n(\E\left[\eta_1\right]+1), \quad \forall ~n>N_2(\xi).
	\]
	Therefore, for $x$ larger enough ($x>N_2(\xi)$), with \cref{addition 5}, we have
	\begin{equation}
		P_\xi[B_x^{-1}] \leqslant
		1+ M(\xi)P_\xi\left[\exp\left\{\tau^{\frac{1}{3}}\right\}\right]
		+(\E\left[\eta_1\right]+1) P_\xi\left[
		\tau_x\exp\left\{\tau^{\frac{2}{3}}\right\}
		\right]
	\end{equation}
	About $P_\xi\left[
	\tau_x\exp\left\{\tau^{\frac{2}{3}}\right\}
	\right]$, for $x$ large enough, we have
	\begin{equation}
		P_\xi\left[\tau_x\exp\left\{\tau^{\frac{2}{3}}\right\} \right] \leqslant P_\xi\left[\exp\left\{\tau_x^{\frac{3}{4}}\right\}\right],
	\end{equation}
	and with the same arguments in \cref{addition 3,addition 4}, we have
	\begin{equation}
		0\leqslant\lim\limits_{x\rightarrow\infty} \frac{1}{x}\log P_\xi\left[\tau_x\exp\left\{\tau^{\frac{1}{3}} \right\}\right] \leqslant
		\lim\limits_{x\rightarrow\infty} \frac{1}{x}\log P_\xi\left[\exp\left\{\tau^{\frac{1}{2}} \right\}\right]=0.
	\end{equation}
	Therefore, combining with above relationships, we have
	\begin{equation}\label{addition 6}
		\lim\limits_{x\rightarrow\infty} \frac{1}{x}\log P_\xi[B_x^{-1}] = 0.
	\end{equation}
	With \cref{addition 6,addition 7,addition 8,addition 9}, we know
	\begin{equation}\label{addition 11}
		\lambda_q +\liminf\limits_{x\rightarrow\infty} \frac{1}{x} \log P_\xi\left(M\geqslant x\right) \geqslant 0, \quad P_E~a.s..
	\end{equation}
	Together with \cref{addition 10,addition 11}, \Cref{Quenched theorem} is proved.

	\subsection{Proof of \Cref{Theorem of strongly cases} and \Cref{Theorem of intermediately cases}}\label{Section3.2}

	For any $k\leqslant -1$ and $l\geqslant 1$, there exists a probability measure $Q_E^{(k,l)}$ on $ \mathcal{P}\left(\N_0\right)^{-k} \otimes \mathcal{P}\left(\N_0\right)^l $ such that for any measurable, bounded and positive function $f$, 
	\begin{equation*}
		Q_E^{(k,l)} (f(F_{k+1},F_{k+2},\cdots,F_0, F_1,\cdots,F_{l}))
		=
		P_E \left(
		f(F_{k+1},F_{k+2},\cdots,F_0, F_1,\cdots,F_{l})
		e^{\sum\limits_{i=k+1}^{0} X_i-k\Lambda_e(1)}
		\right)
	\end{equation*}
	In view of the exponential martingale property, it is not hard to verify the consistency of this family of measures. Hence, the Kolmogorov consistency theorem guarantees the existence of a measure $Q_E$ on $\mathcal{P}\left(\N_0\right)^{\mathbb{Z}}$ such that for any $k\leqslant0$ and $l\geqslant 1$, and any suitable function $f$,
	\begin{equation*}
		Q_E\left(
		f(F_{k+1},F_{k+2},\cdots,F_0,F_1,\cdots,F_l)
		\right) = P_E\left(
		f(F_{k+1},F_{k+2},\cdots,F_0,F_1,\cdots,F_l) e^{\sum\limits_{i=k+1}^{0} X_i-k\Lambda_e(1)}
		\right).
	\end{equation*}
	Subsequently, by a standard monotone class argument, we obtain that for any $k\leqslant 0$ and any suitable function $f$, 
	\begin{equation*}
		Q_E\left(
		f(F_{k+1},F_{k+2},\cdots,F_0,F_1,\cdots)
		\right) = P_E\left(
		f(F_{k+1},F_{k+2},\cdots,F_0,F_1,\cdots) e^{\sum\limits_{i=k+1}^{0} X_i-k\Lambda_e(1)}
		\right).
	\end{equation*}
	Note that, about the two-side environment $\xi$, for any positive index $k$, the marginal distribution of $F_k$ under $Q_E$ is the same as the marginal distribution of $F_k$ under $P_E$, while for any non-positive index $k$, the marginal distribution of $F_k$ is give by a exponent measure change (i.e. for any measurable function $\varphi$, we have $Q_E(\varphi(F_k)) = P_E\left( \varphi(F_k) e^{X_k-\Lambda_e(1)} \right)$). Moreover, no matter under $Q_E$ or $P_E$, all these random elements are independent with each other.
	By straightforward differentiation, for any non-positive index $k\leqslant 0$, we have
	\begin{align}\label{79}
		Q_E\left(X_{k}\right) = \Lambda_e^{\prime}(1).
	\end{align}
	
	Similar to \cref{eq61}, abuse use of notation, we also use $Q_E\otimes P_{\overline{Y}}$ to represent the probability measure such that for any positive and bounded function $f$, we have
	\begin{equation}\label{eq84}
		\iiint f(\xi,\overline{Y},u) Q_E\otimes P_{\overline{Y}}(\d u,\d \xi,\d \overline{Y})
		=
		\iiint f(\xi,\overline{Y},u) P_{\xi,\overline{Y}}(\d u) Q_E(\d \xi) P_{\overline{Y}} (\d \overline{Y}),
	\end{equation}
	where $ P_{\xi,\overline{Y}}(\d u) $ is the transfer measure defined in \Cref{Section2.3}, which is used to describe the subtrees attached to the spine.
	
	Recall that $\lambda_1$ is the unique positive number which satisfies
	\begin{equation}
		\Lambda_e(1)+\Lambda_s(\lambda_1)=0
	\end{equation}
	For any $k\leqslant0$, recall the definition of $J_k$ (see \cref{def of Jk})
	\begin{equation}\label{def of Jk2}
		J_k := \sum_{i=k+1}^{0} X_i -k\Lambda_s(\lambda_1)+\lambda_1 \overline{Y}_{k}= -S_k-k\Lambda_s(\lambda_1)+\lambda_1 \overline{Y}_k.
	\end{equation}
	And for any $x\geqslant 1$, recall the definition of $R_x$ (see \cref{def of R_x}),
	\begin{equation}\label{def of R_x2}
		R_x :=J_{-T_k}= \sum_{i=-T_x+1}^{0} X_i +T_x\Lambda_s(\lambda_1)-\lambda_1 x
		=
		-S_{-T_x} +T_x\Lambda_s(\lambda_1)-\lambda_1 x,
	\end{equation}
	with $R_0 = 0$, where $T_k$ is defined in \cref{def of Tx} and has the property in \cref{27}. 
	
	It is not hard to verify the independence of $\xi$ and $\overline{Y}$ (under $Q_E\otimes P_{\overline{Y}}$). By the independence and the construction of $\overline{Y}$, we know that $\{R_x,x\geqslant0\}$ is also a random walk under $Q_E\otimes P_{\overline{Y}}$, and the drift (or mean) of the random walk is given by
	\begin{align}
		Q_E\otimes P_{\overline{Y}} \left( R_1\right)
		&=
		P_{\overline{Y}}\otimes Q_E\left( \sum_{i=-T_1+1}^{0} X_i +T_1\Lambda_s(\lambda_1)-\lambda_1 \right)
		\nonumber\\
		&=
		P_{\overline{Y}} \left( T_1\Lambda_e^\prime(1) +T_1\Lambda_s(\lambda_1)-\lambda_1 \right)
		\nonumber\\
		&= \frac{\Lambda_e^\prime(1) +\Lambda_s(\lambda_1)-\lambda_1\Lambda_s^\prime(\lambda_1)}{\Lambda_s^\prime(\lambda_1)} = \frac{\varTheta(1)}{\Lambda_s^\prime(\lambda_1)}.\label{mean of R}
	\end{align}
	By \Cref{Def of classfy}, under $Q_E\otimes P_{\overline{Y}}$, for Class \Romannum{1} cases, the random walk has negative drift. While for Class \Romannum{2} cases, the random walk has zero drift.
	
	The relationship between $Q_E\otimes P_{\overline{Y}}$ and $P_E\otimes P_{\overline{Y}}$ can be described by following lemma.
	\begin{lemma}\label{Measure change Relationship 1}
		For any integer $x\geqslant 1$, for any measurable function $\phi$, we have
		\begin{equation}
			e^{\lambda_1 x}P_E\otimes P_{\overline{Y}} (e^{R_x}\phi(B_x,R_x)) = Q_E\otimes P_{\overline{Y}} ( \phi(B_x,R_x) ).
		\end{equation}
		where $B_x$ is defined in \cref{def of overlineA} from branching system $(\mathcal{T},V)$.
		Especially, we have
		\begin{equation}
			e^{\lambda_1 x} P_E\otimes P_\xi(M\geqslant x)
			=
			e^{\lambda_1 x} P_E\otimes P_{\overline{Y}}(e^{R_x}B_x)
			=
			Q_E\otimes P_{\overline{Y}} (B_x).
		\end{equation}
	\end{lemma}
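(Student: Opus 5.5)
The identity is a Radon--Nikodym computation. The key is that $e^{R_x+\lambda_1 x}$ is exactly the density of $Q_E$ with respect to $P_E$ restricted to the environment on the random window $\{-T_x+1,\dots,0\}$.

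First compute the exponent. With $\lambda=\lambda_1$ and $\Lambda_s(\lambda_1)=-\Lambda_e(1)$, the definition \cref{def of R_x2} gives
\begin{equation*}
R_x+\lambda_1 x=\sum_{i=-T_x+1}^{0}X_i+T_x\Lambda_s(\lambda_1)=\sum_{i=-T_x+1}^{0}X_i-T_x\Lambda_e(1).
\end{equation*}
This is precisely the exponent appearing in the definition of $Q_E$ with $k=-T_x$.

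Next, condition on the spine trajectory $\overline{\textbf{Y}}$. Under both $P_E\otimes P_{\overline{Y}}$ and $Q_E\otimes P_{\overline{Y}}$, the environment is independent of $\overline{\textbf{Y}}$. Moreover, the conditional law of the attached subtrees given $(\xi,\overline{\textbf{Y}})$ is the same transfer kernel $P_{\xi,\overline{Y}}$ in both measures, by \cref{eq61,eq84}.

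So fix a realization of $\overline{\textbf{Y}}$ with $T_x=t$. The quantity $P_{\xi,\overline{Y}}\big(\phi(B_x,R_x)\big)$ is then a measurable function $g_t(\xi)$. A priori it may depend on the whole environment $(F_{1-t},F_{2-t},\dots)$, including positive indices, since the subtrees $\mathcal{T}_k$ for $k\in[-t,-1]$ live for all later times.

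Apply the monotone-class extension of the defining identity of $Q_E$ with $k=-t$. It covers functions of $(F_{k+1},F_{k+2},\dots)$ for all coordinates, including positive ones:
\begin{equation*}
P_E\big(g_t(\xi)\,e^{\sum_{i=1-t}^{0}X_i-t\Lambda_e(1)}\big)=Q_E\big(g_t(\xi)\big).
\end{equation*}
Here $R_x$ is itself a function of $(\xi,t)$, so $e^{R_x}\phi(B_x,R_x)$ fits this form. Integrating over $\overline{\textbf{Y}}$ with respect to $P_{\overline{Y}}$ and using Fubini (for nonnegative $\phi$, then by linearity for integrable $\phi$) yields
\begin{equation*}
e^{\lambda_1 x}P_E\otimes P_{\overline{Y}}\big(e^{R_x}\phi(B_x,R_x)\big)=Q_E\otimes P_{\overline{Y}}\big(\phi(B_x,R_x)\big).
\end{equation*}

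The special case takes $\phi(b,r)=b$ and combines this with \Cref{Couping Lemma}.

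The only delicate point is the measurability and dependence structure just used. Since $T_x$ is random, one must apply the measure-change identity conditionally on $\overline{\textbf{Y}}$. That is legitimate only because $\overline{\textbf{Y}}$ is independent of $\xi$ under both measures and the kernel $P_{\xi,\overline{Y}}$ is unchanged. The extended identity must also cover functions that depend on environments at positive indices. This holds because $Q_E$ leaves the marginals of $F_k$, $k\geqslant1$, unchanged and keeps all $F_k$ independent; this is exactly the monotone-class statement established before \cref{79}.
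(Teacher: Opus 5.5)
Your proposal is correct and follows essentially the same route as the paper: both recognize $e^{R_x+\lambda_1 x}=e^{\sum_{i=1-T_x}^{0}X_i-T_x\Lambda_e(1)}$ as the density of $Q_E$ with respect to $P_E$ on $\sigma(F_{1-T_x},F_{2-T_x},\dots)$, and both conclude by Fubini, using that $\xi$ and $\overline{\textbf{Y}}$ are independent and that the transfer kernel $P_{\xi,\overline{Y}}$ is the same under both measures. The differences are minor: the paper splits on $\{T_x=k\}$ and sums over $k$, whereas you condition on the whole trajectory, and your normalization $-t\Lambda_e(1)$ is the one the paper's proof actually uses (the paper's displayed definition of $Q_E$ has a sign slip there).
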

	
	\begin{proof}[Proof of \Cref{Measure change Relationship 1}]
		For any $k\geqslant1$, we have
		\begin{align*}
			e^{\lambda_1 x}P_E\otimes P_{\overline{Y}} \left(e^{R_x}\phi(B_x,R_x),T_x=k\right)
			=
			P_E\otimes P_{\overline{Y}}\left( e^{\sum\limits_{i=-k+1}^{0} X_i +k\Lambda_s(\lambda_1)}\phi(B_x,R_x),T_x=k \right)
		\end{align*}		
		
		Thus, by the relationship between $P_E$ and $Q_E$, by Fubini's Theorem, we have
		\begin{align*}
			&P_E\otimes P_{\overline{Y}}\left( e^{\sum\limits_{i=-k+1}^{0} X_i +k\Lambda_s(\lambda_1)}\phi(B_x,R_x),T_x=k \right)
			\\=&
			\iiint e^{\sum\limits_{i=-k+1}^{0} X_i +k\Lambda_s(\lambda_1)}\phi(B_x,R_x) \ind{T_x =k} P_{\xi,\overline{Y}}(\d u) P_E(\d \xi) P_{\overline{Y}} (\d \overline{Y})
			\\=&
			\int\left(\iint \phi(B_x,R_x) \ind{T_x=k} P_{\xi,\overline{Y}}(\d u) P_{\overline{Y}} (\d \overline{Y}) \right) e^{\sum\limits_{i=-k+1}^{0} X_i +k\Lambda_s(\lambda_1)}  P_E(\d \xi) 
			\\=&
			\int\left(\iint \phi(B_x,R_x) \ind{T_x=k} P_{\xi,\overline{Y}}(\d u) P_{\overline{Y}} (\d \overline{Y}) \right) Q_E(\d \xi) 
			\\
			=&
			\iiint \phi(B_x,R_x) \ind{T_x=k}  P_{\xi,\overline{Y}}(\d u) Q_E(\d \xi) P_{\overline{Y}} (\d \overline{Y})
			\\
			=&
			Q_E\otimes P_{\overline{Y}} \left(
			\phi(B_x,R_x),T_x=k
			\right).
		\end{align*}
		Combining with above equation and summing over $k$, we have
		\begin{align}
			e^{\lambda_1 x}P_E\otimes P_{\overline{Y}} \left(e^{R_x}\phi(B_x,R_x)\right)
			&=
			\sum_{k=1}^{\infty}e^{\lambda_1 x}P_E\otimes P_{\overline{Y}} \left(e^{R_x}\phi(B_x,R_x),T_x=k\right)
			\nonumber\\
			&=\sum_{k=1}^{\infty}
			Q_E\otimes P_{\overline{Y}} \left( \phi(B_x,R_x),T_x=k \right)
			=
			Q_E\otimes P_{\overline{Y}} \left( \phi(B_x,R_x)\right).\nonumber
		\end{align}
		Combining with \Cref{Couping Lemma}, the proof is finished.
	\end{proof}
	
	\begin{remark}
		The \Cref{Measure change Relationship 1} can be translate by the Randon-Nikodym derivative. That is, on $\sigma$ filed $\mathcal{H}_x$ (see \cref{def of Hx}), we have
		\begin{equation}
			\left.\frac{\d Q_E\otimes P_{\overline{Y}}}{\d P_E\otimes P_{\overline{Y}}} \right|_{\mathcal{H}_x}
			= e^{R_x+\lambda_1x}.
		\end{equation}
	\end{remark}

	\
	
	Firstly, we consider the Class \Romannum{1} cases, and provide the following lemma.
	\begin{lemma}\label{Lemma of Strongly}
		Under the Assumptions of \Cref{Theorem of strongly cases}, under $Q_E\otimes P_{\overline{Y}}$, $\{R_x,x\geqslant0\}$ is a random walk with negative drift.
		And for $Q_E\otimes P_{\overline{Y}}$ almost sure $(\xi,\overline{\textbf{Y}})$, we have
		\begin{equation*} \sum_{k=-\infty}^{-1}e^{J_x}\Phi(\mathcal{T}_k,\overline{Y}_k,0) <\infty, \quad P_{\xi,\overline{Y}} \quad  a.s..
		\end{equation*}
		Therefore, according to the definition of $B_x$ (see \cref{def of overlineA}), we have
		\begin{equation*}
			\lim\limits_{x\rightarrow\infty}B_x =B_\infty \in (0,1],\quad Q_E\otimes P_{\overline{Y}}\quad a.s..
		\end{equation*}
	\end{lemma}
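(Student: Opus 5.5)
The negative drift is already recorded in \cref{mean of R}: under $Q_E\otimes P_{\overline{Y}}$, $R_x$ is a random walk with mean increment $\varTheta(1)/\Lambda_s^\prime(\lambda_1)<0$. For the almost sure finiteness of the series, I read $e^{J_x}$ in \cref{def of overlineA} as $e^{J_k}$, since the summation index is $k$. Conditionally on $(\xi,\overline{\textbf{Y}})$, every summand is nonnegative. The conditional mean of the full series is therefore, by \Cref{mean of Phi},
\begin{equation*}
P_{\xi,\overline{Y}}\Big[\sum_{k\leqslant -1} e^{J_k}\Phi(\mathcal{T}_k,\overline{Y}_k,0)\Big]=\sum_{k\leqslant -1} e^{J_k}\eta_{k+1}.
\end{equation*}
It is enough to show that this right-hand side is finite $Q_E\otimes P_{\overline{Y}}$-a.s. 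Then the series itself is finite $P_{\xi,\overline{Y}}$-a.s. The terms are nonnegative and the partial sums over $-T_x\leqslant k\leqslant -1$ increase in $x$ with $T_x\uparrow\infty$. Hence $B_x$ decreases monotonically to $B_\infty=(1+\sum_{k\leqslant-1}e^{J_k}\Phi_k)^{-1}$, which lies in $(0,1]$.

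To bound $\sum_k e^{J_k}\eta_{k+1}$ I group the indices by excursions. Write $I_m=\{-T_m,\dots,-T_{m-1}-1\}$. For $k\in I_m$ the definitions of $J_k$ and $R_x$ give
\begin{equation*}
J_k=R_{m-1}+\big(J_k-J_{-T_{m-1}}\big).
\end{equation*}
Note that $\overline{Y}_k\leqslant -(m-1)$ on this block, since the $m$-th excursion of $\overline{Y}$ stays at or below level $-(m-1)$. Define
\begin{equation*}
U_m:=\sum_{k\in I_m}\exp\big\{J_k-J_{-T_{m-1}}\big\}\,\eta_{k+1}.
\end{equation*}
Each $U_m$ is a function of the excursion block $\zeta_m$ together with the $\eta$'s on that block, so the $U_m$ are i.i.d. and $U_m$ is independent of $R_{m-1}$. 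The sum then reads $\sum_{m\geqslant1}e^{R_{m-1}}U_m$. This is a classical perpetuity-type series. It converges a.s. provided $R_m/m\to\varTheta(1)/\Lambda_s^\prime(\lambda_1)<0$ a.s., which holds by the SLLN under \Cref{subcritical}, and provided $\E\log_+U_1<\infty$. The second condition gives $\log_+U_m=o(m)$ a.s. by Borel–Cantelli, so the terms decay geometrically.

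The main obstacle is verifying $Q_E\otimes P_{\overline{Y}}(\log_+U_1)<\infty$; this is where \cref{addition assumption for strongly cases} enters. I use $\log_+(\sum_{i=1}^n a_i)\leqslant \log n+\max_i\log_+a_i$ and bound the max by the sum. This gives
\begin{equation*}
\log_+U_1\leqslant \log\theta_1+\sum_{k\in I_1}\Big(\big|J_k\big|+\log_+\eta_{k+1}\Big).
\end{equation*}
Here $\theta_1=T_1$ has exponential moments under $P_{\overline{Y}}$, because the excursion is for a walk with positive drift and jumps bounded above. Under $Q_E$ the variables $X_i$ for $i\leqslant0$ have finite mean, since $\Lambda_e$ is finite near $1$. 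The step from $\overline{Y}$ contributes at most $\lambda_1\theta_1$ plus a term of comparable size. By Wald's identity, the $|J_k|$-part therefore has finite mean, of order $\E[\theta_1^2]$ times moments of $|X|$ and $\Lambda_s(\lambda_1)$. For the $\eta$-part, note that for $k\leqslant-1$ the index $k+1\leqslant 0$, so under $Q_E$ the marginal law of $\eta_{k+1}$ is tilted by $e^{X-\Lambda_e(1)}$. Wald then gives $\E\theta_1\cdot\E(e^{X_1-\Lambda_e(1)}\log_+\eta_1)<\infty$. The single boundary index $k+1=1$ does not occur, since $k\leqslant -1$. Care is needed because $\theta_1$ is a stopping time for the reversed excursion but independent of $\xi$, so Wald applies directly by conditioning on $\theta_1$. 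Combining these steps gives the lemma.
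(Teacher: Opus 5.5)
Your proof is correct, but it follows a different route from the paper's. The paper also splits $\sum_{k\leqslant -1}\eta_{k+1}e^{J_k}$ along the excursion blocks. It then bounds each ingredient pathwise and separately:
\begin{itemize}
\item the law of large numbers for $-S_k$ under $Q_E$, with a margin $\delta$ coming from $\varTheta(1)<0$;
\item Borel--Cantelli applied to the individual $\eta_k$, giving $\eta_k<e^{-\delta k/2}$ eventually, which is where \cref{addition assumption for strongly cases} enters;
\item the crude bound $\overline{Y}_k\leqslant -x$ on the $(x+1)$-st block;
\item the law of large numbers $T_x\sim x/\Lambda_s^\prime(\lambda_1)$.
\end{itemize}
These combine into a geometrically decaying bound.

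You instead package each block into the i.i.d.\ functional $U_m$ and apply the perpetuity criterion to $\sum_m e^{R_{m-1}}U_m$. What you need is then just the negative drift of $R$ and $\E\log_+U_1<\infty$.

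The paper's route uses only first-moment laws of large numbers for each ingredient and never estimates a block functional. Yours is more structural: it isolates exactly the two things that matter. It is also the same decomposition $\sum_x\chi(\zeta_{x+1})e^{R_x}$ that the paper itself uses for Class II. The price is a Wald-type computation requiring $\E\theta_1^2<\infty$ and $Q_E|X_0|<\infty$. Both hold, since $\theta_1$ has exponential moments and $\Lambda_e$ is finite on $[0,\infty)$.

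You can simplify the log-moment step. You do not need $|J_k|$: since $\log_+(e^u\eta)\leqslant u_++\log_+\eta$, only $(J_k)_+$ matters. On $I_1$ you have $\overline{Y}_k\leqslant -1$, so the spatial term $\lambda_1\overline{Y}_k$ is nonpositive and can be dropped, leaving $(J_k)_+\leqslant \sum_{i=k+1}^{0}|X_i|+\theta_1\Lambda_s(\lambda_1)$. The same holds on every block $I_m$, because $\overline{Y}_k+m-1\leqslant 0$ there.

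This also removes your remark that $\overline{Y}$ contributes ``at most $\lambda_1\theta_1$ plus a term of comparable size''. As stated, that remark is imprecise, because the downward jumps inside an excursion are unbounded, so $|\overline{Y}_k|$ is not controlled by $\theta_1$. It would still be integrable, since under the tilted step law the negative part of a jump has all moments, but you do not need it at all.
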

	
	\begin{proof}[Proof of \Cref{Lemma of Strongly}]
		Under the Assumptions of \Cref{Theorem of strongly cases}, and by \cref{mean of R}, we know that $\{R_x,x\geqslant0\}$ is a random walk with negative drift.
		And then, there exists some $\delta>0$ small enough, such that 
		\begin{equation*}
			\Lambda_e^\prime(1)+\Lambda_s(\lambda_1)-\lambda_1\Lambda_s^\prime(\lambda_1)<-\delta<0, \quad \text{and} \quad 
			\delta<\lambda_1\Lambda_s^{\prime}(\lambda_1).
		\end{equation*}
		Note that, by \cref{79}, under $Q_E\otimes P_{\overline{Y}}$, $\left\{-S_{k} = \sum\limits_{i=-k+1}^{0} X_i,k\leqslant 0\right\}$ is a random walk with mean $\Lambda_e^\prime(1)$. Therefore, by the law of large number, there exists some finite and negative integer $K_1$ (may depend on $\xi$), such that for any $k<K_1<0$,
		\begin{equation}\label{58}
			-S_{k}-k\Lambda_s(\lambda_1) \leqslant
			-k\Lambda_e^\prime(1)-k \Lambda_s(\lambda_1)
			\leqslant -k(\lambda_1\Lambda_s^\prime(\lambda_1) -\delta).
		\end{equation}
		By \cref{addition assumption for strongly cases} of \Cref{Theorem of strongly cases}, under $Q_E\otimes P_{\overline{Y}}$, for any $k\leqslant 0$, $\{\eta_{k}:k\leqslant 0\}$ are i.i.d random variables, and satisfy
		\begin{equation*}
			Q_E\otimes P_{\overline{Y}} \left( \log_+\eta_{k} \right)<\infty.
		\end{equation*}
		So, by a simple application of Fubini's theorem, for any $\delta>0$, we have
		\begin{equation*}
			\sum_{k=1}^{\infty}Q_E\otimes P_{\overline{Y}} \left( \log_+\eta_{k}>\frac{\delta}{2}k \right) <\infty
		\end{equation*}
		Then, by Borel-Cantelli Lemma, we deduce that
		\begin{equation*}
			Q_E\otimes P_{\overline{Y}} \left(\bigcup\limits_{m\leqslant 0}\bigcap\limits_{k\leqslant m} \{ \eta_{k} <e^{-\frac{\delta}{2}k}\}\right) =1,
		\end{equation*}
		which implies that there exists some negative integer $K_2$ (also may depend on $\xi$), such that for any $k<K_2$, we have
		\begin{align}\label{59}
			\eta_{k}<e^{-\frac{\delta}{2}k}.
		\end{align}
		
		Combining with \cref{58,59}, by \cref{def of Jk}, and according to \cref{27} (if $k<-T_x$, we have $\overline{Y}_{k}\leqslant-x$), we have
		\begin{align}
			\sum_{k=-\infty}^{-1} \eta_{k+1} e^{J_k}
			&=
			\sum_{k=-\infty}^{-1} \eta_{k+1} e^{-S_k-k\Lambda_s(\lambda_1)+\lambda_1\overline{Y}_k}
			\nonumber\\
			&\preccurlyeq
			\sum_{k=-\infty}^{-1} e^{-\frac{\delta}{2}k} e^{\delta k-k\lambda_1\Lambda^\prime_s(\lambda_1)+\lambda_1\overline{Y}_k}
			\nonumber\\
			&=\sum_{x=0}^{\infty} 
			\sum_{k=-T_{x+1}}^{-T_{x}-1}  e^{\frac{\delta}{2} k-k\lambda_1\Lambda^\prime_s(\lambda_1)+\lambda_1\overline{Y}_k}
			\nonumber\\
			&\leqslant
			\sum_{x=0}^{\infty} 
			\sum_{k=-T_{x+1}}^{-T_{x}-1} e^{(\frac{\delta}{2}-\lambda_1\Lambda^\prime_s(\lambda_1))k} e^{-\lambda_1 x}
			\nonumber\\
			&\leqslant\sum_{x=0}^{\infty} (T_{x+1}-T_x)e^{-(\frac{\delta}{2}-\lambda_1\Lambda_s^\prime(\lambda_1))T_{x+1}-\lambda_1 x}
			\nonumber\\
			&\leqslant\sum_{x=0}^{\infty} T_{x+1} e^{-(\frac{\delta}{2}-\lambda_1\Lambda_s^\prime(\lambda_1))T_{x+1}-\lambda_1 x}.\label{73}
		\end{align}
		By \cref{mean of theta,def of Tx}, we know that under $Q_E\otimes P_{\overline{Y}}$, $\{T_x,x\geqslant 1\}$ is a random walk with positive drift $\Lambda_s^\prime(\lambda_1)^{-1}$. Again, by the law of large number,
		there exists some $N_2$ large enough, such that for any $x>N_2$, we have
		\begin{equation}
			T_x \leqslant \left(\Lambda_s^\prime(\lambda_1)^{-1}+ \frac{\delta}{2\lambda_1 \Lambda_s^\prime(\lambda_1)^{-2}}\right)x.
		\end{equation}
		Then, by \cref{73}, we have
		\begin{align}
			\sum_{k=-\infty}^{-1} \eta_{k+1} e^{J_k}
			&\preccurlyeq \sum_{x=0}^{\infty}  x \exp\left(
			\left(\lambda_1\Lambda_s^\prime(\lambda_1)-\frac{\delta}{2}\right)\left(
			\Lambda_s^\prime(\lambda_1)^{-1}+ \frac{\delta}{2\lambda_1 \Lambda_s^\prime(\lambda_1)^{-2}}
			\right)x-\lambda_1 x
			\right)
			\nonumber\\
			&\leqslant
			\sum_{x=0}^{\infty} x\exp\left(
			-\frac{\delta^2}{4\lambda_1 \Lambda_s^\prime(\lambda_1)^{-1}}x
			\right) <\infty.
		\end{align}
		which implies the series does converge.
		
		Given $ (\xi,\overline{\textbf{Y}}) $, under $ P_{\xi,\overline{Y}} $, by \Cref{mean of Phi}, for any $x\geqslant1$, for $Q_E\otimes P_{\overline{Y}}$ almost sure $(\xi,\overline{\textbf{Y}})$, we have
		\begin{align*}
			P_{\xi,\overline{Y}}\left(\sum_{k=-T_x}^{-1}e^{J_x}\Phi(\mathcal{T}_k,\overline{Y}_k,0)\right) &= \sum_{k=-T_x}^{-1} e^{J_k}P_{\xi,\overline{Y}} \left[ \Phi(\mathcal{T}_k,\overline{Y}_k,0) \right]
			&=\sum_{k=-T_x}^{-1} \eta_{k+1}e^{J_k}\leqslant\sum_{k=-\infty}^{-1} \eta_{k+1} e^{J_k}<\infty.
		\end{align*}
		And naturally, $ \{e^{J_x}\Phi(\mathcal{T}_k,\overline{Y}_k,0);k\leqslant1\} $ are  non-negative random variables. Therefore, we have
		\begin{equation*}
			\sum_{k=-\infty}^{-1}e^{J_x}\Phi(\mathcal{T}_k,\overline{Y}_k,0) <\infty, \quad P_{\xi,\overline{Y}} \quad a.s..
		\end{equation*}
		And the remaining part is a trivial conclusion.
		So far, the lemma is proved.
	\end{proof}
	
	Now, we are ready to prove \Cref{Theorem of strongly cases}.
	
	\begin{proof}[Proof of \Cref{Theorem of strongly cases}]
		According to \Cref{Couping Lemma,Measure change Relationship 1,Lemma of Strongly}, by dominated convergence theorem ($B_x\leqslant 1$), we have
		\begin{equation*}
			\lim\limits_{x\rightarrow\infty}e^{\lambda_1 x} \P(M\geqslant x)
			=
			\lim\limits_{x\rightarrow\infty} e^{\lambda_1 x}P_E\otimes P_{\overline{Y}} (e^{R_x}B_x)
			=
			\lim\limits_{x\rightarrow\infty} Q_E\otimes P_{\overline{Y}} (B_x)
			=
			Q_E\otimes P_{\overline{Y}} (B_\infty) \in (0,1].
		\end{equation*}
		Thus, \Cref{Theorem of strongly cases} is proved.
	\end{proof}
	\
	
	Next, we focus on the Class \Romannum{2} cases.
	
	Under the Assumptions of \Cref{Theorem of intermediately cases}, by \cref{mean of R}, we know that under $Q_E\otimes P_{\overline{Y}}$, $\{R_x:x\geqslant 1\}$ is a recurrent random walk (its mean is $0$).
	Also, under Assumptions \ref{subcritical} and \ref{Jump Assumption}, the variance of the random walk satisfies
	\begin{align}
		Q_E\otimes P_{\overline{Y}} \left(R_1^2\right)
		&=
		P_{\overline{Y}}\otimes Q_E\left(R_1^2\right)
		\nonumber\\
		&=
		P_{\overline{Y}} \left(
		T_1 \text{Var}_{Q_E}(X_0)+ \lambda_1^2(T_1\Lambda_s^{\prime}(\lambda_1)-1)^2
		\right)
		\nonumber\\
		&=
		P_{\overline{Y}} (T_1)\text{Var}_{Q_E}(X_0)
		+\lambda_1^2  \text{Var}_{P_{\overline{Y}}} \left( \Lambda_s^{\prime}(\lambda_1)T_1\right)<\infty.
		\label{finite variance}
	\end{align}
	By \cref{finite variance},
	it is well know that there exists some finite and positive constant $C$ (also see the Chapter 4 in \cite{B2}), such that 
	\begin{align}\label{sqrtx probability decay}
		\lim\limits_{x\rightarrow\infty} \sqrt{x}Q_E\otimes P_{\overline{Y}} \left( \max\limits_{0\leqslant j\leqslant x} R_j\leqslant 0 \right) = C.
	\end{align}
	And indeed, there exists some finite constant $C_0$, such that for any $x\geqslant 0$, we have
	\begin{equation}\label{60}
		\sqrt{x}Q_E\otimes P_{\overline{Y}} \left( \max\limits_{0\leqslant j\leqslant x} R_j\leqslant 0 \right) \leqslant C_0.
	\end{equation}
	
	Define $\sigma_0=0$, and for $s\geqslant0$, let
	$\sigma_{s+1}: = \inf\{k\geqslant \sigma_s+1:R_k > R_{\sigma_s}\}$.
	Then $\{\sigma_k:k\geqslant0\}$ is the strictly ascending ladder epochs of the random walk.
	Define the renewal function
	\begin{equation}
		V(x) := I\{x\leqslant 0\} +\sum_{s=1}^{\infty} Q_E\otimes P_{\overline{Y}} \left(
		R_{\sigma_s}\leqslant -x
		\right), \quad  x\in \R.
	\end{equation}
	By \cref{finite variance}, we know that under $Q_E\otimes P_{\overline{Y}}$,
	\begin{equation*}
		\left\{V(R_x) \ind{\max\limits_{0\leqslant i\leqslant x} R_i \leqslant 0},x\geqslant 0\right\} ~\text{is a non-negative martingale with mean $1$,}
	\end{equation*}
	respectively to the filtration $\{\mathcal{H}_x;x\geqslant 0\}$, where $\mathcal{H}_k$ is defined in \cref{def of Hx}.
	Thus, by the martingale property and Kolmogorov's extension theorem (i.e the Doob-h transform), there exists a probability measure $(Q_E\otimes P_{\overline{Y}})^{-}$ on $\mathcal{H}_\infty$, such that for any $x\geqslant 0$,
	\begin{equation}
		\d (Q_E\otimes P_{\overline{Y}})^{-} \big|_{\mathcal{H}_x} = V(R_x) \ind{\max\limits_{0\leqslant i\leqslant x} R_i \leqslant 0} \d Q_E\otimes P_{\overline{Y}} \big|_{\mathcal{H}_x}.
	\end{equation}
	Under the probability $ (Q_E\otimes P_{\overline{Y}})^{-} $, the Markov process $\{R_x:x\geqslant0\}$ is called the random walk conditional to stay non-positive.
	
	About the relationship between $Q_E\otimes P_{\overline{Y}}$ and $
	\left(Q_E\otimes P_{\overline{Y}}\right)^-$, we have following lemma:
	
	\begin{lemma}\label{lemma2 for intermediately}
		For a sequence uniformly bounded random variable $ \{\Pi_x:x\geqslant 0\} $, such that
		for any $x$, $\Pi_x$ is $\mathcal{H}_x$ measurable, and $\lim\limits_{x\rightarrow\infty}\Pi_x = \Pi_\infty,~ (Q_E\otimes P_{\overline{Y}})^- $ almost surely.
		Then, we have
		\begin{equation}
			\lim\limits_{x\rightarrow\infty}Q_E\otimes P_{\overline{Y}} \left( \Pi_x \big| \max\limits_{0\leqslant i\leqslant x} R_i \leqslant 0 \right)
			=
			(Q_E\otimes P_{\overline{Y}})^{-} ( \Pi_\infty ).
		\end{equation}
	\end{lemma}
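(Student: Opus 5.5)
This is the classical argument for random walks conditioned to stay non-positive (cf. Afanasyev--Geiger--Kersting--Vatutin, Lemma 2.5, and Chapter 5 of \cite{B2}). Write $\mathbb{Q}:=Q_E\otimes P_{\overline{Y}}$, $A_x:=\{\max_{0\leqslant i\leqslant x}R_i\leqslant 0\}$ and $\mathbb{Q}^-:=(Q_E\otimes P_{\overline{Y}})^-$.

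\textbf{Step 1: fixed index.} Fix $m$ and a bounded $\mathcal{H}_m$-measurable $\Pi_m$. For $x>m$, the increments $R_{m+j}-R_m$ are independent of $\mathcal{H}_m$, since they depend only on the excursions $\zeta_{m+1},\zeta_{m+2},\dots$. The Markov property therefore gives
\begin{equation*}
\mathbb{Q}(\Pi_m;A_x)=\mathbb{Q}\bigl(\Pi_m \ind{A_m}\, g_{x-m}(R_m)\bigr),\qquad g_n(y):=\mathbb{Q}\bigl(\max_{0\leqslant j\leqslant n}R_j\leqslant -y\bigr).
\end{equation*}
For a zero-mean, finite-variance walk (\cref{finite variance}) one has the standard asymptotics $g_n(y)/\mathbb{Q}(A_n)\to V(y)$ for each $y\leqslant 0$, together with the uniform bound $g_n(y)\leqslant C V(y)\,\mathbb{Q}(A_n)$. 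Both follow from \cref{sqrtx probability decay,60} and the renewal-function estimates of Chapter 4 in \cite{B2}.

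Since $\mathbb{Q}(A_x)\sim C x^{-1/2}\sim \mathbb{Q}(A_{x-m})$ and $\mathbb{Q}(V(R_m)\ind{A_m})=1$, dominated convergence then yields
\begin{equation*}
\mathbb{Q}(\Pi_m\mid A_x)\to \mathbb{Q}\bigl(\Pi_m V(R_m)\ind{A_m}\bigr)=\mathbb{Q}^-(\Pi_m).
\end{equation*}

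\textbf{Step 2: diagonal approximation.} Now let $\Pi_x\to\Pi_\infty$ $\mathbb{Q}^-$-a.s. with $|\Pi_x|\leqslant K$. For $m<x$ write
\begin{equation*}
\bigl|\mathbb{Q}(\Pi_x\mid A_x)-\mathbb{Q}^-(\Pi_\infty)\bigr|\leqslant \mathbb{Q}\bigl(|\Pi_x-\Pi_m|\wedge 2K\mid A_x\bigr)+\bigl|\mathbb{Q}(\Pi_m\mid A_x)-\mathbb{Q}^-(\Pi_m)\bigr|+\mathbb{Q}^-\bigl(|\Pi_m-\Pi_\infty|\bigr).
\end{equation*}
The second term vanishes as $x\to\infty$ by Step 1. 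The third term vanishes as $m\to\infty$ by bounded convergence.

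\textbf{Step 3: the main obstacle.} The first term is the hard part, because $\Pi_x$ is only $\mathcal{H}_x$-measurable, so Step 1 does not apply to it directly. I would introduce $\varepsilon_m:=\sup_{n\geqslant m}|\Pi_n-\Pi_m|\wedge 2K$, which tends to $0$ $\mathbb{Q}^-$-a.s., and truncate at $\ell\geqslant m$ via $\varepsilon_{m,\ell}:=\max_{m\leqslant n\leqslant \ell}|\Pi_n-\Pi_m|\wedge 2K$. This variable is $\mathcal{H}_\ell$-measurable.

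For $x\geqslant \ell$, the first term is bounded by $\mathbb{Q}(\varepsilon_{m,\ell}\mid A_x)$ plus $\mathbb{Q}(|\Pi_x-\Pi_m|\wedge 2K;\,\text{index beyond }\ell\mid A_x)$. The tail piece is the delicate one. To control it, I would compare $\mathbb{Q}(\cdot\mid A_x)$ with $\mathbb{Q}^-$ on $\mathcal{H}_x$. The density of the former relative to the latter on $\mathcal{H}_{x/2}$ is
\begin{equation*}
\frac{g_{x/2}(R_{x/2})}{\mathbb{Q}(A_x)\,V(R_{x/2})},
\end{equation*}
which is uniformly bounded by the bound from Step 1. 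This suggests replacing the fixed $\ell$ by $\ell=\lfloor x/2\rfloor$ and handling the remaining window $(x/2,x]$ separately, using the time-reversal/duality of the walk conditioned on $A_x$. Should this become cumbersome, the simplest fallback is to prove the lemma only for $\Pi_x=\phi(\text{finite-dimensional }\mathcal{H}_{x'}\text{ data})$ with $x'\leqslant x/2$, which is all the $B_x$-application requires. Note that $B_x$ is monotone in $x$, so bounding $B_x$ between $B_{\lfloor x/2\rfloor}$-type quantities suffices.

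With the uniform density bound, $\mathbb{Q}(\varepsilon_{m,\lfloor x/2\rfloor}\mid A_x)\leqslant C\,\mathbb{Q}^-(\varepsilon_m)\to 0$. Letting first $x\to\infty$ and then $m\to\infty$ concludes the proof.
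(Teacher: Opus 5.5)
Your Steps 1 and 2 are correct. They are the standard first half of the argument in Lemma 2.5 of \cite{Afanasyev2005}, which is all the paper cites.

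The gap is in Step 3. The quantity you finally bound, $\mathbb{Q}(\varepsilon_{m,\lfloor x/2\rfloor}\mid A_x)$, does not dominate the first term of Step 2. The reason is that $|\Pi_x-\Pi_m|$ is only $\mathcal{H}_x$-measurable and is not bounded by $\varepsilon_{m,\lfloor x/2\rfloor}$. On $\mathcal{H}_x$ the density of $\mathbb{Q}(\cdot\mid A_x)$ with respect to $\mathbb{Q}^-$ is $\mathbbm{1}_{A_x}/(\mathbb{Q}(A_x)V(R_x))$, which is of order $\sqrt{x}$ where $R_x=O(1)$. So the window $(x/2,x]$ is exactly where the difficulty lives, and the ``time-reversal/duality'' step that would handle it is never carried out. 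Your concluding sentence therefore does not follow.

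The fallback does not prove the lemma as stated. It also does not obviously cover the application: the lemma is used for quantities built from $B_x$, which depends on all excursions $\zeta_1,\dots,\zeta_x$. Monotonicity only gives $B_x\leqslant B_{\lfloor x/2\rfloor}$. A matching lower bound again requires showing that the excursions in $(x/2,x]$ are negligible under $\mathbb{Q}(\cdot\mid A_x)$.

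The missing idea is to lengthen the conditioning horizon rather than shorten the measurable window. Fix $\gamma>1$ and set $n=\lceil\gamma x\rceil$. For nonnegative $\mathcal{H}_x$-measurable $f$, your Markov-property identity and the uniform bound $g_k(y)\leqslant CV(y)\,\mathbb{Q}(A_k)$ give
\[
\mathbb{Q}(f\mid A_n)=\frac{\mathbb{Q}\bigl(f\,\mathbbm{1}_{A_x}\,g_{n-x}(R_x)\bigr)}{\mathbb{Q}(A_n)}\leqslant C\,\frac{\mathbb{Q}(A_{n-x})}{\mathbb{Q}(A_n)}\,\mathbb{Q}^-(f)\leqslant C_\gamma\,\mathbb{Q}^-(f).
\]
The last bound holds uniformly in $x$ by \eqref{sqrtx probability decay}. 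Take $f=|\Pi_x-\Pi_m|\leqslant\varepsilon_m$ and combine with your Steps 1 and 2, run with $A_n$ in place of $A_x$. This yields $\mathbb{Q}(\Pi_x\mid A_{\lceil\gamma x\rceil})\to\mathbb{Q}^-(\Pi_\infty)$ for every $\gamma>1$.

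To remove $\gamma$, use $A_n\subset A_x$:
\[
\mathbb{Q}(\Pi_x\mid A_x)=\frac{\mathbb{Q}(A_n)}{\mathbb{Q}(A_x)}\,\mathbb{Q}(\Pi_x\mid A_n)+\frac{\mathbb{Q}(\Pi_x;\,A_x\setminus A_n)}{\mathbb{Q}(A_x)}.
\]
Since $\mathbb{Q}(A_n)/\mathbb{Q}(A_x)\to\gamma^{-1/2}$ and $|\Pi_x|\leqslant K$, you get $\limsup_{x}|\mathbb{Q}(\Pi_x\mid A_x)-\mathbb{Q}^-(\Pi_\infty)|\leqslant 2K(1-\gamma^{-1/2})$. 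Now let $\gamma\downarrow 1$. This closes the proof for general $\mathcal{H}_x$-measurable $\Pi_x$, with no time reversal needed.
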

	
	\begin{proof}[Proof of \Cref{lemma2 for intermediately}]
		See the proof of the Lemma 2.5 in \cite{Afanasyev2005} or the Chapter 5.2 in \cite{B2} for more details.
	\end{proof}
	
	Similarly to the \Cref{Lemma of Strongly}, we have following lemma for Class \Romannum{2} cases:
	
	\begin{lemma}\label{lemma1 for intermediately}
		Under the Assumptions of \Cref{Theorem of intermediately cases}, for $ (Q_E\otimes P_{\overline{Y}})^{-} $ almost surely $(\xi,\overline{\textbf{Y}})$, we have
		\begin{equation*} \sum_{k=-\infty}^{-1}e^{J_x}\Phi(\mathcal{T}_k,\overline{Y}_k,0) <\infty, \quad P_{\xi,\overline{Y}} \quad  a.s..
		\end{equation*}
		Therefore, we have
		\begin{equation*}
			\lim\limits_{x\rightarrow\infty}B_x =B_\infty \in (0,1],\quad \left(Q_E\otimes P_{\overline{Y}}\right)^{-}\quad a.s..
		\end{equation*}
	\end{lemma}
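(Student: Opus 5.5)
We follow the proof of \Cref{Lemma of Strongly}. The negative drift of $R_x$ is replaced by the behaviour of the random walk conditioned to stay non-positive. By \Cref{mean of Phi} and Fubini, it suffices to show that
\[
\Sigma:=\sum_{k=-\infty}^{-1}\eta_{k+1}e^{J_k}<\infty,\qquad (Q_E\otimes P_{\overline{Y}})^{-}\text{-a.s.}
\]
Indeed, conditionally on $(\xi,\overline{\textbf{Y}})$, the $P_{\xi,\overline{Y}}$-mean of the non-negative series $\sum_k e^{J_k}\Phi(\mathcal{T}_k,\overline{Y}_k,0)$ equals $\Sigma$. Hence the series is $P_{\xi,\overline{Y}}$-a.s. finite, the partial sums up to $-T_x$ increase, and $B_x$ decreases to some $B_\infty\in(0,1]$.

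\textbf{Step 1: reduce to excursions.} Split $\Sigma$ along the excursions $\zeta_{x+1}$, $x\geqslant0$. For $-T_{x+1}\leqslant k<-T_x$ we write
\[
J_k=R_x+(J_k-J_{-T_x}).
\]
The increment $J_k-J_{-T_x}$ is a function of $\zeta_{x+1}$ only: it equals $\sum_{i=k+1}^{-T_x}X_i-(k+T_x)\Lambda_s(\lambda_1)+\lambda_1(\overline{Y}_k+x)$. Since $\overline{Y}_k+x\leqslant 0$ on the excursion, this gives
\[
\Sigma\leqslant\sum_{x\geqslant0}e^{R_x}\,\Xi_{x+1},\qquad
\Xi_{m}:=\sum_{k=-T_m}^{-T_{m-1}-1}\eta_{k+1}\exp\Big\{\Big(\sum_{i=k+1}^{-T_{m-1}}X_i+(-T_{m-1}-k)\Lambda_s(\lambda_1)\Big)_+\Big\}.
\]
Here $\Xi_m$ is a function of $\zeta_m$, and a crude bound is
\[
\Xi_m\leqslant \theta_m\Big(\max_{\text{excursion}}\eta\Big)\exp\Big\{\sum_{\text{excursion}}\big(|X_i|+\Lambda_s(\lambda_1)\big)\Big\}.
\]
The $\Xi_m$ are i.i.d. under $Q_E\otimes P_{\overline{Y}}$.

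\textbf{Step 2: growth of $R_x$ under the conditioned measure.} Under $(Q_E\otimes P_{\overline{Y}})^{-}$ the walk $R_x$ has mean zero and finite variance by \cref{finite variance}. The standard result for random walks conditioned to stay non-positive (Chapter 5 of \cite{B2}, as in Afanasyev et al.) gives, for any $\epsilon>0$, $R_x\leqslant -x^{1/2-\epsilon}$ for all large $x$ almost surely. I would quote this directly.

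\textbf{Step 3: control of $\Xi_{x+1}$ — the main obstacle.} We need $\log_+\Xi_{x+1}=o(x^{1/2-\epsilon})$ almost surely under the conditioned measure. Under the unconditioned measure this follows from Borel--Cantelli. Indeed, $\sum_x Q(\log_+\Xi>x^{1/2-\epsilon})<\infty$ holds once $Q[(\log_+\Xi)^{2+\delta}]<\infty$, choosing $\epsilon$ small relative to $\delta$. This moment bound comes from \cref{additional assumption for intermediately} for the $\eta$ part. The $X$ and $\theta$ parts have exponential moments, by Assumption \ref{subcritical} and because $\theta$ has exponential moments for the walk with positive drift.

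The delicate point is transferring this to $(Q_E\otimes P_{\overline{Y}})^{-}$. There $\zeta_{x+1}$ is no longer independent of the past, and its law is tilted by $V(R_{x+1})/V(R_x)$ and by the event $\{R_{x+1}\leqslant 0\}$. I would use the bound
\[
(Q_E\otimes P_{\overline{Y}})^{-}\big(\log_+\Xi_{x+1}>x^{1/2-\epsilon}\big)=Q_E\otimes P_{\overline{Y}}\big[V(R_{x+1})\ind{\max_{i\leqslant x+1}R_i\leqslant0}\,;\,\log_+\Xi_{x+1}>x^{1/2-\epsilon}\big].
\]
Then apply $V(y)\leqslant C(1+|y|)$ and $|R_{x+1}|\leqslant|R_x|+|R_{x+1}-R_x|$, together with independence of $\zeta_{x+1}$ from $\mathcal{H}_x$. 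This bounds the probability by
\[
C\,Q\big[(1+|R_1|)\,;\,\log_+\Xi>x^{1/2-\epsilon}\big]+C\,Q\big[(1+|R_x|)\ind{\max_{i\leqslant x}R_i\leqslant0}\big]\,Q\big(\log_+\Xi>x^{1/2-\epsilon}\big).
\]
The first term is summable by Hölder and the moment bound. The second expectation equals $Q\big[(1+|R_x|)\ind{\max_{i\leqslant x}R_i\leqslant0}\big]\leqslant C\,Q^{-}[(1+|R_x|)/V(R_x)]\leqslant C$, so the second term is also summable. If the $2+\delta$ moment proves slightly insufficient here, I would adjust the exponent $\epsilon$.

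With Steps 2 and 3, the terms $e^{R_x}\Xi_{x+1}\leqslant e^{-x^{1/2-\epsilon}/2}$ eventually, so $\Sigma<\infty$ almost surely, which finishes the proof.
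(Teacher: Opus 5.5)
Your proposal is correct and follows essentially the same route as the paper: the excursion decomposition $\sum_k\eta_{k+1}e^{J_k}\leqslant\sum_x e^{R_x}\chi(\zeta_{x+1})$, the quoted almost-sure envelope $R_x\leqslant -x^{1/2-\epsilon}$ under $(Q_E\otimes P_{\overline{Y}})^{-}$, and Borel--Cantelli under the $h$-transform using the $(2+\delta)$-moment assumption \cref{additional assumption for intermediately}. The differences are cosmetic. The paper uses subadditivity $V(x+y)\leqslant V(x)+V(y)$, the identity $Q_E\otimes P_{\overline{Y}}\bigl(V(R_x)\ind{\max_{i\leqslant x}R_i\leqslant 0}\bigr)=1$, Cauchy--Schwarz, and the extra $x^{-1/2}$ factor from \cref{sqrtx probability decay}; you instead use the linear growth of $V$ and H\"older with an exponent close to $1$, which works equally well because $R_1$ has all moments under $Q_E\otimes P_{\overline{Y}}$.
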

	
	\begin{proof}[Proof of \Cref{lemma1 for intermediately}]
		Firstly, We have
		\begin{align}
			\sum_{k=-\infty}^{-1} \eta_{k+1} e^{J_k}
			&=
			\sum_{x=0}^{\infty} 
			\sum_{k=-T_{x+1}}^{-T_x-1} \eta_{k+1} e^{J_k}
			\nonumber
			\\
			&=
			\sum_{x=0}^{\infty} e^{\sum\limits_{i=-T_x+1}^0 X_i +T_x\Lambda_s(\lambda_1)+\lambda_1 \overline{Y}_{-T_x}} 
			\left(\sum_{k=-T_{x+1}}^{-T_x-1} \eta_{k+1} e^{\sum\limits_{i=k+1}^{-T_x} X_i - (k+T_x)\Lambda_s(\lambda_1)+\lambda_1(\overline{Y}_{k}+x)} 
			\right)
			\nonumber\\
			&=\sum_{x=0}^{\infty}
			\chi(\zeta_{x+1})e^{R_x} \nonumber
		\end{align}
		where $\chi(\zeta_{x+1}):= \sum\limits_{k=-T_{x+1}}^{-T_x-1} \eta_{k+1} e^{\sum\limits_{i=k+1}^{-T_x} X_i - (k+T_x)\Lambda_s(\lambda_1)+\lambda_1(\overline{Y}_k+x)} $ is a measurable function about $\zeta_{x+1}$ (defined in \cref{def of zeta}).
		
		About $R_x$, similar to the proof of Lemma 2.7 in \cite{Afanasyev2005}, we know that for any $\epsilon>0$, we have
		\begin{equation}\label{Decay of R_x}
			e^{R_x} = O(e^{-x^{\frac{1}{2}-\frac{\epsilon}{2}}}), \quad (Q_E\otimes P_{\overline{Y}})^- \quad a.s..
		\end{equation}
		
		Next, we want to bound the probability$ \left(Q_E\otimes P_{\overline{Y}}\right)^- (\chi(\zeta_{x+1}) > y).$
		
		Note that, for any $x\geqslant0$, under $ Q_E\otimes P_{\overline{Y}} $, $\chi(\zeta_{x+1})$ is a measurable function about $\zeta_{x+1}$, thus is independent of $\mathcal{H}_x$ and measurable respectively with $\mathcal{H}_{x+1}$, and has the identical distribution with $\chi(\zeta_{1})$.
		Because $V(x)$ is a renewal function, the inequality
		\begin{equation*}
			V(x+y) \leqslant V(x)+V(y)
		\end{equation*}
		is valid.
		And according to the Doob-h transform of $\left(Q_E\otimes P_{\overline{Y}}\right)^-$, we have
		\begin{align}
			\left(Q_E\otimes P_{\overline{Y}}\right)^- (\chi(\zeta_{x+1}) > y)
			&=
			Q_E\otimes P_{\overline{Y}} \left(\ind{\chi(\zeta_{x+1}) > y} V(R_{x+1}) \ind{ \max\limits_{0\leqslant i\leqslant x+1} R_i \leqslant 0} \right)
			\nonumber\\
			&\leqslant
			Q_E\otimes P_{\overline{Y}} \left(\ind{\chi(\zeta_{x+1}) > y} \left(V(R_{x})+V(R_{x+1}-R_x)\right) \ind{ \max\limits_{0\leqslant i\leqslant x} R_i \leqslant 0} \right)
			\nonumber\\
			&\leqslant
			Q_E\otimes P_{\overline{Y}} \left(\chi(\zeta_{x+1})>y\right)
			\cdot Q_E\otimes P_{\overline{Y}} \left( V(R_x)  \ind{ \max\limits_{0\leqslant i\leqslant x} R_i \leqslant 0} \right)
			\nonumber\\
			&~~~+
			Q_E \otimes P_{\overline{Y}} \left(
			\ind{\chi(\zeta_{x+1}) > y} V(R_{x+1}-R_x) \right)
			\cdot  Q_E\otimes P_{\overline{Y}}
			\left( \max\limits_{0\leqslant i\leqslant x} R_i \leqslant 0
			\right)
			\nonumber\\
			:&= I_1(y) + I_2(y)\label{100}.
		\end{align}
		
		By \cref{additional assumption for intermediately} ($\delta$ is the constant in \cref{additional assumption for intermediately}) and the construction of $Q_E$, using inequality $
		\left(\sum\limits_{k=1}^{n} a_k\right)^{2+\delta}
		\leqslant n^{1+\delta} \sum\limits_{k=1}^{n} a_k^{2+\delta} $ (by H\"older inequality), we have
		\begin{align*}
			Q_E\otimes P_{\overline{Y}}\left(
			\left(
			\log_+ \max\limits_{-T_1\leqslant i< 0} \eta_{k+1}
			\right)^{2+\delta}\right)
			&\leqslant
			Q_E\otimes P_{\overline{Y}}
			\left(
			T_1^{1+\delta} \sum_{i=-T_x}^{-1} \left(\log_+\eta_{k+1}\right)^{2+\delta}
			\right)
			\\
			&= P_{\overline{Y}} (T_1^{2+\delta}) \cdot \E[ e^{X_1-\Lambda_e(1)}\left(\log_+\eta_1\right)^{2+\delta} ] <\infty.
		\end{align*}
		Also, we have
		\begin{align*}
			Q_E\otimes P_{\overline{Y}} \left(\left(
			\max\limits_{-T_1\leqslant k< 0}\left|\sum_{i=k+1}^{0} X_i\right|
			\right)^{2+\delta}\right) 
			&\leqslant
			Q_E\otimes P_{\overline{Y}}
			\left(  T_1^{1+\delta} \sum_{k=-T_1}^{-1}\left(
			|X_k|
			\right)^{2+\delta} \right)
			\\
			&=
			P_{\overline{Y}} (T_1^{2+\delta}) Q_E(|X_0|^{2+\delta})<\infty.
		\end{align*}
		And we have
		\begin{equation*}
			\chi(\zeta_1) \leqslant
			\left(\max\limits_{-T_1\leqslant k< 0} \eta_{k+1}\right)
			e^{ \max\limits_{-T_1\leqslant k< 0}\left|\sum\limits_{i=k+1}^{0} X_i\right| } e^{T_1 \Lambda_s(\lambda_1)}.
		\end{equation*}
		Then, by Markov inequality and H\"older inequality, for any $y>1$, we have
		\begin{align}
			I_1(y)=Q_E\otimes P_{\overline{Y}}
			\left(
			\chi(\zeta_1)>y\right)
			&\leqslant
			Q_E\otimes P_{\overline{Y}}
			\left(
			\log_+ \max\limits_{-T_1\leqslant i< 0} \eta_{k+1}
			+
			\max\limits_{-T_1\leqslant k< 0}\left|\sum_{i=k+1}^{0} X_i\right|
			+ T_1\Lambda_s(\lambda_1) > \log y
			\right)
			\nonumber\\
			&\leqslant
			\frac{3^{1+\delta}}{(\log y)^{2+\delta}}
			\bigg(Q_E\otimes P_{\overline{Y}}\left(
			\left(
			\log_+ \max\limits_{-T_1\leqslant i< 0} \eta_{k+1}
			\right)^{2+\delta}\right)
			\nonumber\\
			&~~~+
			Q_E\otimes P_{\overline{Y}} \left(\left(
			\max\limits_{-T_1\leqslant k< 0}\left|\sum_{i=k+1}^{0} X_i\right|
			\right)^{2+\delta}\right) 
			+
			\Lambda_s^{2+\delta}(\lambda_1)Q_E\otimes P_{\overline{Y}}
			\left(
			T_1^{2+\delta}
			\right)
			\bigg)
			\nonumber\\
			&\leqslant
			\frac{K_1}{(\log y)^{2+\delta}},\label{I1}
		\end{align}
		where $K_1$ is some finite constant.
		
		About $I_2(y)$, by the identically distribution property and Cauchy-Schwarz inequality, we have
		\begin{align}
			Q_E\otimes P_{\overline{Y}}
			\left(
			\ind{\chi(\zeta_{x+1}) > y} V(R_{x+1}-R_x)
			\right)
			&=
			Q_E\otimes P_{\overline{Y}}
			\left(
			\ind{\chi(\zeta_{x+1}) > y} V(R_{1})
			\right)
			\nonumber\\
			&\leqslant
			Q_E\otimes P_{\overline{Y}}
			\left(
			\chi(\zeta_{x+1}) > y\right)^{\frac{1}{2}}
			\cdot
			Q_E\otimes P_{\overline{Y}}
			\left( V(R_1) ^{2} \right)^{\frac{1}{2}}
			\nonumber\\
			&\leqslant
			K_1 (\log y)^{-1-\frac{\delta}{2}}
			\cdot K_2 =
			K_3 (\log y)^{-1-\frac{\delta}{2}},\label{I2}
		\end{align}
		where $K_2$ is some finite constant such that $  	Q_E\otimes P_{\overline{Y}}
		\left( V(R_1) ^{2} \right)^{\frac{1}{2}}<K_2 $, and $K_3 = K_1 K_2$.
		
		Then, together with \cref{sqrtx probability decay,100,I1,I2}, for $y>1$, and $x$ large enough, there exists some finite constant $K_4<\infty$, such that
		\begin{equation}
			(Q_E\otimes P_{\overline{Y}})^- (\chi(\zeta_{x+1})>y)
			\leqslant
			K_4\left(\frac{1}{(\log y)^{2+\delta}}
			+\frac{1}{(\log y)^{1+\frac{\delta}{2}}}\frac{1}{\sqrt{x}}\right).
		\end{equation}
		Especially, for $\varepsilon>0$, we take $y_x = e^{x^{\frac{1}{2}-\varepsilon}} $, we have
		\begin{align*}
			\sum_{x=1}^{\infty} 
			(Q_E\otimes P_{\overline{Y}})^- (\chi(\zeta_{x+1})> e^{x^{\frac{1}{2}-\varepsilon}}) 
			\leqslant
			K_4\sum_{x=1}^{\infty} 
			\left(
			x^{-(1+\frac{\delta}{2}-2\varepsilon-\varepsilon\delta)}
			+x^{-(1+\frac{\delta}{4}-\varepsilon-\frac{\delta\varepsilon}{2})}
			\right)
		\end{align*}
		We can take $\varepsilon$ small enough such that above series converges.
		Then, by Borel-Cantelli Lemma, we have
		\begin{equation}\label{Decay of Chi_x}
			\chi(\zeta_{x+1}) = O(e^{x^{\frac{1}{2}-\epsilon}}), \quad  \left(Q_E\otimes P_{\overline{Y}}\right)^-\quad a.s..
		\end{equation}
		Therefore, combining with \cref{Decay of R_x,Decay of Chi_x}, for $\varepsilon>0$ small enough, we have
		\begin{equation}
			\chi(\zeta_{x+1}) e^{R_x} = O\left(  e^{x^{\frac{1}{2}-\varepsilon} - x^{\frac{1}{2}-\frac{\varepsilon}{2}}}  \right),
		\end{equation}
		which implies
		\begin{equation}
			\sum_{k=-\infty}^{-1} \eta_{k+1}e^{J_k}=\sum_{x=0}^{\infty} \chi(\zeta_{x+1}) e^{R_x}<\infty, \quad (Q_E\otimes P_{\overline{Y}})^-\quad a.s..
		\end{equation}
		
		The remaining part of the proof is the same as that of \Cref{Lemma of Strongly}. And we omit these contents. 
	\end{proof}

	Differently from Class \Romannum{1} cases, we need an additional estimate to control the upper bound of the target probability for Class \Romannum{2} cases. To do this, we provide following lemma:
	
	\begin{lemma}\label{Modified Upper Estimate}
		Under the Assumptions of \Cref{Theorem of intermediately cases}, for any $p>1$, there exists some finite constant $C_p$, such that
		\begin{equation}
			Q_E\otimes P_{\overline{Y}} \left( e^{\frac{1}{p}R_x}B_x \right) \leqslant C_p<\infty.
		\end{equation}
	\end{lemma}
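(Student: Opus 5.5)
The plan is to split on the value of the running maximum $\overline{R}_x:=\max_{0\leqslant j\leqslant x}R_j$ and to use that $B_x$ punishes a high past maximum. By \cref{def of overlineA} and Jensen's inequality (the map $u\mapsto (1+u)^{-1}$ is convex) we only have the trivial bound $B_x\leqslant 1$; the point is that $B_x$ is small when $\overline{R}_x$ is large. Write $A_j:=\{\overline{R}_x\in(j-1,j]\}$ for $j\geqslant 1$ and $A_0:=\{\overline{R}_x\leqslant 0\}$. On $A_0$ we bound $e^{R_x/p}B_x\leqslant 1$. On $A_j$ with $j\geqslant1$, let $y\leqslant x$ be the first index with $R_y>j-1$. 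The sum in $B_x^{-1}$ contains the term with $k=-T_y$, which equals $e^{R_y}\Phi(\mathcal{T}_{-T_y},\overline{Y}_{-T_y},0)$ because $J_{-T_y}=R_y$. Hence
\[
B_x\leqslant \bigl(1+e^{j-1}\Phi_y\bigr)^{-1},\qquad \Phi_y:=\Phi(\mathcal{T}_{-T_y},\overline{Y}_{-T_y},0).
\]

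Since $R_x\leqslant \overline{R}_x\leqslant j$, each block contributes at most $e^{j/p}\,Q_E\otimes P_{\overline{Y}}\bigl(A_j\cap\{\Phi_y\ \text{small}\}\bigr)+e^{j/p}e^{-(j-1)/2}\,Q_E\otimes P_{\overline{Y}}(A_j)$, where "small" means $\Phi_y\leqslant e^{-(j-1)/2}$. The second part is summable over $j$ as soon as $p>2$. For $1<p\leqslant 2$, replace $e^{-(j-1)/2}$ by $e^{-(j-1)(1-\epsilon)}$ with $\epsilon<1-1/p$; the exponent then stays negative. The first part needs the conditional law of $\Phi_y$, given the past up to the ladder time, to put small mass near $0$. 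By \Cref{mean of Phi}, given $(\xi,\overline{Y})$ the variable $\Phi_y$ is the optional-line weight of a subtree started with $D-1$ children, where $D$ is size-biased. Its law depends only on the shifted environment $\Gamma_{-T_y}\xi$ and on $\overline{Y}_{-T_y}=-y$ relative to the level $0$. A lower bound on the probability that this subtree produces a particle hitting level $0$ is therefore needed. Rather than bounding $\Phi_y$ below directly, I would instead bound $B_x$ by a sum over many ladder times: at each strict ascending ladder epoch $\sigma_s\leqslant x$ of $R$ we get an independent chance of a large contribution. That makes $Q(B_x>e^{-(j-1)(1-\epsilon)}\mid A_j)$ decay geometrically in the number of ladder epochs below level $j$, and that number is of order $j$.

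The remaining input is a tail bound of the form $Q_E\otimes P_{\overline{Y}}(\overline{R}_x>j)$ of order at most polynomial in $j$, uniformly in $x$. This holds because \cref{finite variance} gives finite variance and zero drift, and the ladder heights have finite mean. Combined with the geometric decay $e^{-c j}$ from the previous step and the factor $e^{j/p}$, summing over $j$ gives a finite constant $C_p$, provided the constant satisfies $c>1/p$.

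I expect the main obstacle to be this last comparison of rates: making the per-ladder-epoch failure probability small enough. The decay must beat $e^{j/p}$ for every $p>1$, that is, with $c$ close to $1$. I would try first to use the full strength of the term $e^{R_{\sigma_s}}\Phi$, noting that at ladder height $R_{\sigma_s}\approx h$ a single nonzero $\Phi$ already forces $B_x\lesssim e^{-h}\Phi^{-1}$. Concretely, I would bound $Q(e^{R_x/p}B_x)\leqslant Q\bigl(e^{\overline{R}_x/p}\min(1,e^{-\overline{R}_x}\Phi_{\sigma^*}^{-1})\bigr)$, where $\sigma^*$ is the time attaining the maximum. Using the independence of $\Phi_{\sigma^*}$ from the past given $\zeta$, this reduces to the harmonic-type moment $\E[\Phi^{-1/p}\wedge e^{\overline{R}_x/p}]$. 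That in turn needs a bound on $P(\Phi\leqslant u)$, obtained from the probability, conditionally on the environment, that the attached subtree reaches $0$. Assumption \cref{additional assumption for intermediately} would control this through the logarithmic moments of $\eta$.
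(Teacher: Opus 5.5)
There is a genuine gap. Abbreviate $Q_E\otimes P_{\overline{Y}}$ by $\mathbf{Q}$. Because $R$ is a centred finite-variance walk, $\mathbf{Q}(\overline R_x>j)\to1$ as $x\to\infty$ for each fixed $j$, so the tail of $\overline R_x$ gives no decay uniformly in $x$. Your argument therefore rests entirely on a bound $\mathbf{Q}\bigl(B_x>e^{-(1-\epsilon)(j-1)},\ \overline R_x\in(j-1,j]\bigr)\lesssim e^{-cj}$ with $c>1/p$, that is, with $c$ arbitrarily close to $1$ as $p\downarrow1$. Nothing in the proposal gives this.

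First, $\Phi_y$ does not merely put "small mass near $0$". It equals $0$ exactly whenever the subtree $\mathcal{T}_{-T_y}$, started at position $-y$, never sends a particle up to level $0$ (for instance if it dies out). There is no reason for this conditional probability to be $O(e^{-cj})$. The same atom breaks your final reduction:
\[
\mathbf{Q}\bigl(e^{\overline R_x/p}\min(1,e^{-\overline R_x}\Phi_{\sigma^*}^{-1})\bigr)\geqslant\mathbf{Q}\bigl(e^{\overline R_x/p};\Phi_{\sigma^*}=0\bigr).
\]
The right-hand side is unbounded in $x$ unless $\mathbf{Q}(\Phi_{\sigma^*}=0\mid\overline R_x)$ decays like $e^{-\overline R_x/p}$, which is exactly the missing estimate.

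Second, the ladder-epoch repair does not supply the rate. An epoch at height $h$ can push $B_x$ below $e^{-(1-\epsilon)j}$ with $\Phi$ of order one only if $h\gtrsim(1-\epsilon)j$. So only about $\epsilon j$ epochs are useful, and even granting independence you would get $e^{-C\epsilon j}$ with an uncontrolled $C$, not $c>1/p$. Moreover the epochs are not independent chances. $\Phi(\mathcal{T}_{-T_y},\overline Y_{-T_y},0)$ is driven by $\Gamma_{-T_y}\xi$, i.e. by the same environment $F_{-T_y+1},\dots,F_0$ that builds $R_1,\dots,R_y$, and all of them share $F_1,F_2,\dots$. Conditioning on the ladder structure of $R$, or on the argmax $\sigma^*$ (not a stopping time), therefore changes their laws. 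In effect you are trying to bound $\mathbf{Q}(e^{\overline R_x/p}B_x)$, which is much stronger than the lemma and is the kind of sharp harmonic-moment control the paper explicitly says it lacks.

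The missing idea is that no lower bound on $\Phi$ is needed: return to the forward quenched picture.

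\emph{Reduction.} By \Cref{Measure change Relationship 1} and \Cref{Couping Lemma},
\[
\mathbf{Q}(e^{R_x/p}B_x)=e^{\lambda_1x}\,P_E\otimes\tilde P_\xi\bigl(e^{(1+1/p)R_x}B_x\bigr).
\]
For every $\xi$ one has $\tilde P_\xi(e^{R_x}B_x)=P_\xi(M\geqslant x)\leqslant1$.

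\emph{Dominating $R_x$ by the environment.} $R_x=S_{\tau_x}+\tau_x\Lambda_s(\lambda_1)-\lambda_1x\leqslant U-\lambda_1x$, where $U:=\sup_{n\geqslant0}(S_n-n\Lambda_e(1))$ depends on the environment alone. Since $e^{S_n-n\Lambda_e(1)}$ is a mean-one $P_E$-martingale, $P_E(U\geqslant u)\leqslant e^{-u}$.

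\emph{Splitting on $U$.} On $\{U<\lambda_1x\}$, use $e^{R_x/p}\leqslant1$, $B_x\leqslant1$ and $P_E\otimes\tilde P_\xi(e^{R_x})=e^{-\lambda_1x}$. On $\{U\geqslant\lambda_1x\}$, pull $e^{(U-\lambda_1x)/p}$ out of the quenched expectation and use $P_\xi(M\geqslant x)\leqslant1$. This leaves $P_E\bigl(e^{(U-\lambda_1x)/p};U\geqslant\lambda_1x\bigr)\leqslant\frac{p}{p-1}e^{-\lambda_1x}$.

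Hence $C_p=1+\frac{p}{p-1}$ works. The restriction $p>1$ comes from the tail exponent $1$ of $U$, not from any finer property of $B_x$.
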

	
	\begin{proof}[Proof of \Cref{Modified Upper Estimate}]
		Take $q>1$ such that $\frac{1}{p}+\frac{1}{q}=1$.
		
		By \Cref{Couping Lemma,Measure change Relationship 1}, we have
		\begin{equation*}
			Q_E\otimes P_{\overline{Y}} \left( e^{\frac{1}{p}R_x}B_x \right)
			=
			e^{\lambda_1 x}
			P_E\otimes P_{\overline{Y}} \left(e^{(1+\frac{1}{p})R_x} B_x\right)
			=
			e^{\lambda_1 x}
			P\otimes \tilde{P}_\xi\left(e^{(1+\frac{1}{p})R_x}B_x\right).
		\end{equation*}
		
		For $P_E~a.s.$ environment $\xi$, 
		define $U:=\sup\{S_n-n\lambda_e(1);n\geqslant 0\}$.
		Note that, $\{S_n-n\lambda_e(1);n\geqslant 0\}$ is a random walk with log-Laplace exponent $ \overline{\Lambda}(\theta)=\Lambda_e(\theta)-\theta \Lambda_e(1) $, which has a positive zero point at $\theta=1$.
		Then, $P_E$ almost surely, we have $U<\infty$. 
		Moreover, we have
		\begin{equation*}
			P_E(U\geqslant y) \sim e^{-y}.
		\end{equation*}
		Note that, if $\xi\in \{U <\lambda x\}$, we have $R_x\leqslant 0,~ \tilde{P}_\xi~a.s.$. So, we have
		\begin{align*}
			\tilde{P}_\xi(e^{(1+\frac{1}{p})R_x}B_x)
			\leqslant\tilde{P}_\xi(e^{R_x}B_x),
		\end{align*}
		which implies that
		\begin{equation*}
			P\otimes \tilde{P}_\xi (e^{(1+\frac{1}{p})R_x}B_x;U<\lambda x)\leqslant
			P\otimes \tilde{P}_\xi (e^{R_x}B_x)\leqslant P\otimes \tilde{P}_\xi (e^{R_x})=e^{-\lambda_1 x}.
		\end{equation*}
		And if $\xi\in \{U\geqslant \lambda_1 x\}$, and the inequality $R_x\leqslant U-\lambda_1 x$ holds. Then we have
		\begin{equation*}
			\tilde{P}_\xi(e^{(1+\frac{1}{p})R_x}B_x)
			\leqslant e^{\frac{1}{p}(U-\lambda_1x)}
			\tilde{P}_\xi(e^{R_x}B_x)\leqslant e^{\frac{1}{p}(U-\lambda_1x)},
		\end{equation*}
		which implies that
		\begin{equation*}
			P_E\otimes \tilde{P}_\xi (e^{(1+\frac{1}{p})R_x}B_x;U\geqslant\lambda x)\leqslant
			P\left( e^{\frac{1}{p}(U-\lambda_1 x)};U\geqslant \lambda_1 x \right)
		\end{equation*}
		And according to the tail probability of $U$, it is not hard to verify that as $x\rightarrow\infty$, we have
		\begin{equation*}
			P\left( e^{\frac{1}{p}(U-\lambda_1 x)} |U\geqslant \lambda_1 x \right) \sim \int_{0}^{\infty} e^{\frac{1}{p}y} e^{-y} \d x <\infty.
		\end{equation*}
		Combining with above relationships, we know that there exists some finite constant $C_p$, such that
		\begin{equation*}
			P\otimes \tilde{P}_\xi\left(e^{\left(1+\frac{1}{p}\right)R_x}B_x\right)
			\leqslant C_p e^{-\lambda_1 x}.
		\end{equation*}
		The lemma is proved.
	\end{proof}
	
	Define $ \gamma_x = \inf\left\{0\leqslant k\leqslant x, R_k=\max\limits_{0\leqslant j\leqslant x} R_j\right\}$. 
	With \Cref{lemma1 for intermediately,lemma2 for intermediately,Modified Upper Estimate}, we have following lemma:
	
	\begin{lemma}\label{intermediately lemma}
		Under the Assumptions of \Cref{Theorem of intermediately cases}, for any $j\geqslant 0$, there exists some constant $ M_j\in (0,\infty)$ such that
		\begin{align}
			\lim\limits_{x\rightarrow\infty}\sqrt{x}Q_E\otimes P_{\overline{Y}}
			\left( B_x,\gamma_x = j
			\right) = M_j<\infty.
		\end{align}
		In addition, there exist some $1<q<\frac{3}{2}$ and finite constant $C_1$, such that \[M_j\leqslant C Q_E\otimes P_{\overline{Y}} \left(
		B_j,\gamma_j=j\right) \leqslant C_1 j^{-\frac{3}{2q}}. \]
		where $C$ is the same constant in \cref{sqrtx probability decay}.
	\end{lemma}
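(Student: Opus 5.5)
We split the path at the time $\gamma_x=j$ of the first maximum of $R$ on $[0,x]$, which is the standard route for the intermediate regime of BPRE (cf.\ \cite{Afanasyev2005}, Chapter 5 of \cite{B2}). On $\{\gamma_x=j\}$ the pre-$j$ piece $(R_0,\dots,R_j)$ satisfies $R_k<R_j$ for $k<j$; this event is $\mathcal{H}_j$-measurable and has the law of $\{\gamma_j=j\}$. The post-$j$ increments $\{R_{j+i}-R_j\}_{0\le i\le x-j}$ are independent of $\mathcal{H}_j$ and stay $\le 0$.

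The first step is to write $B_x$ as a function of both pieces. Split the sum in \cref{def of overlineA} into $k\in[-T_j,-1]$ and $k\in[-T_x,-T_j-1]$. The second part equals $e^{R_j}$ times a sum $\widehat\Sigma_{x-j}$ built from the shifted environment, the shifted excursions $\zeta_{j+1},\zeta_{j+2},\dots$ and the attached trees. This $\widehat\Sigma_{x-j}$ has the same law as $\sum_{k=-T_{x-j}}^{-1}e^{J_k}\Phi(\mathcal{T}_k,\overline{Y}_k,0)$ and is independent of $\mathcal{H}_j$. Hence
\[
B_x=\Big(B_j^{-1}+e^{R_j}\widehat\Sigma_{x-j}\Big)^{-1}.
\]

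Next we condition on $\mathcal{H}_j$ and use independence:
\[
Q_E\otimes P_{\overline{Y}}(B_x,\gamma_x=j)=Q_E\otimes P_{\overline{Y}}\Big(\ind{\gamma_j=j}\,\Psi_{x-j}(B_j,R_j)\Big),
\]
where
\[
\Psi_n(b,r):=Q_E\otimes P_{\overline{Y}}\Big((b^{-1}+e^{r}\Sigma_n)^{-1};\max_{i\le n}R_i\le 0\Big).
\]
The event inside $\Psi_n$ is the shifted walk staying non-positive (with strict versus weak inequality handled as usual by lattice conventions). For fixed $(b,r)$ we apply \Cref{lemma2 for intermediately} with $\Pi_n=(b^{-1}+e^r\Sigma_n)^{-1}$. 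This is bounded by $b$ and $\mathcal{H}_n$-measurable. It converges $(Q_E\otimes P_{\overline{Y}})^-$-a.s.\ because, by \Cref{lemma1 for intermediately}, $\Sigma_n\uparrow\Sigma_\infty<\infty$ a.s. Together with \cref{sqrtx probability decay} this gives
\[
\sqrt{n}\,\Psi_n(b,r)\to C\,(Q_E\otimes P_{\overline{Y}})^-\big((b^{-1}+e^r\Sigma_\infty)^{-1}\big).
\]
Since $\sqrt{x}/\sqrt{x-j}\to1$, and \cref{60} gives the domination $\sqrt n\Psi_n\le C_0 b$ with $B_j\le1$, dominated convergence over $\mathcal{H}_j$ yields the limit $M_j$. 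Positivity of $M_j$ follows because $\Sigma_\infty<\infty$ a.s.\ and $Q(\gamma_j=j)>0$. The same domination gives
\[
M_j\le C\,Q_E\otimes P_{\overline{Y}}(B_j,\gamma_j=j).
\]
For this we drop $\Sigma_\infty$ and note that the constant is the limit constant $C$, since $(b^{-1}+\cdot)^{-1}\le b$ and $\lim\sqrt n\,Q(\max\le0)=C$.

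The main obstacle is the bound $Q_E\otimes P_{\overline{Y}}(B_j,\gamma_j=j)\le C_1 j^{-3/(2q)}$. The plan is Hölder with exponents $p,q$, $\frac1p+\frac1q=1$:
\[
Q(B_j;\gamma_j=j)\le Q\big(e^{\frac1pR_j}B_j\big)^{1/p}\,Q\big(e^{-\frac{q}{p}R_j}B_j;\gamma_j=j\big)^{1/q}.
\]
This uses $B_j\le B_j^{1/p}B_j^{1/q}$ and inserts $e^{\pm R_j/p}$. The first factor is $\le C_p^{1/p}$ by \Cref{Modified Upper Estimate}. For the second factor, $B_j\le1$, and on $\{\gamma_j=j\}$ we have $R_j>0$ with $R_j>R_k$ for all $k<j$. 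By time reversal, $\{\gamma_j=j\}$ becomes the event that the reversed walk stays strictly positive up to time $j$, with endpoint $R_j$. The standard local estimate for a driftless finite-variance walk then gives
\[
Q(e^{-sR_j};\ R\ \text{reversed stays positive up to } j)\asymp j^{-3/2}\int_0^\infty e^{-sy}U(y)\,\d y
\]
with $U$ the renewal function and $s=q/p>0$. The integral is finite, so the second factor is $\le Kj^{-3/2}$, and the total is $\le C_1 j^{-3/(2q)}$. Choosing $p$ large makes $q$ close to $1$, so some $q\in(1,3/2)$ works; this choice is what makes $\sum_j M_j<\infty$ later. The delicate point is the lattice/local version of the $j^{-3/2}$ bound for $R$, which need not be integer valued. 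If no exact local limit theorem is available, we can use a Stone-type concentration bound $Q(R_j\in[y,y+1],\text{stay positive})\le K(1+y)j^{-3/2}$, which still suffices.
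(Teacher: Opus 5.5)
Your overall route is the paper's. You split $\{\gamma_x=j\}=\{\gamma_j=j\}\cap\{\max_{j\le s\le x}(R_s-R_j)\le 0\}$, condition on $\mathcal H_j$, and combine \Cref{lemma2 for intermediately} with \cref{sqrtx probability decay}. You then get $M_j\le C\,Q_E\otimes P_{\overline{Y}}(B_j,\gamma_j=j)$ from $B_x\le B_j$, and finish with H\"older, \Cref{Modified Upper Estimate} and a $j^{-3/2}$ bound for $Q_E\otimes P_{\overline{Y}}(e^{-sR_j};\gamma_j=j)$. The paper quotes that last bound from Chapter 4 of \cite{B2}; your time-reversal and Stone-type bound is an acceptable substitute.

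However, the step where you go beyond the paper is false. The remainder $\widehat\Sigma_{x-j}=e^{-R_j}\sum_{k=-T_x}^{-T_j-1}e^{J_k}\Phi(\mathcal T_k,\overline Y_k,0)$ is neither independent of $\mathcal H_j$ nor distributed as $\Sigma_{x-j}$. There are two reasons.
\begin{enumerate}
\item A tree $\mathcal T_k$ with $k<-T_j$ reproduces according to $F_{t+1}$ at every time $t>k$, and its particles may live past time $-T_j$. So its offspring laws and the weights $e^{-(S_{|\nu|}-S_k)}$ in $\Phi$ involve $(F_t)_{t>-T_j}$. That part of the environment is $\mathcal H_j$-measurable and also determines $R_j$, $\gamma_j$ and $B_j$.
\item $\Phi(\mathcal T_k,\overline Y_k,0)$ stops particles at the original level $0$, which is $j$ above the shifted origin $(-T_j,-j)$. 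By contrast, $\Sigma_{x-j}$ stops them at the shifted origin.
\end{enumerate}
Hence $\Psi_{x-j}$ is not a function of $(B_j,R_j)$ alone, and \Cref{lemma2 for intermediately} cannot be applied ``for fixed $(b,r)$''. Your positivity argument for $M_j$ rests on the same false independence.

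The repair is the paper's formulation: keep $Q_E\otimes P_{\overline{Y}}\big(B_x\,\big|\,\max_{j\le s\le x}(R_s-R_j)\le 0,\mathcal H_j\big)$ as an $\mathcal H_j$-measurable random variable. Only the increments $R_{j+i}-R_j$, which are functions of $\zeta_{j+1},\zeta_{j+2},\dots$, are independent of $\mathcal H_j$. That is all the proof of \Cref{lemma2 for intermediately} uses when it is run conditionally on $\mathcal H_j$ along $(\mathcal H_{j+n})_{n\ge 0}$. Convergence of $B_{j+n}$ is automatic, because $B_x$ is nonincreasing in $x$. For positivity, look at the conditional mean of the post-$j$ part given environment and path, $\sum_{k<-T_j}\eta_{k+1}e^{J_k}=e^{R_j}\sum_{k<-T_j}\eta_{k+1}e^{J_k-R_j}$. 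It depends only on $R_j$ and the post-$j$ excursions, so the argument of \Cref{lemma1 for intermediately} applies to it. Your domination $\sqrt{x}(\cdot)\le C_0B_j$ and the bound $M_j\le C\,Q_E\otimes P_{\overline{Y}}(B_j,\gamma_j=j)$ then go through.

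Separately, fix the H\"older step. As displayed, your two integrands multiply to $e^{-(\frac1p-\frac1{p^2})R_j}B_j\ind{\gamma_j=j}\le B_j\ind{\gamma_j=j}$, so the inequality does not follow. Inserting $e^{\pm R_j/p}$, as your text says, would instead give the first factor $Q_E\otimes P_{\overline{Y}}(e^{R_j}B_j)$, which \Cref{Modified Upper Estimate} does not control (it needs an exponent strictly below $1$). Insert $e^{\pm R_j/p^2}$, as the paper does. This gives the second factor $Q_E\otimes P_{\overline{Y}}(e^{-\frac{q}{p^2}R_j};\gamma_j=j)^{1/q}$, and any positive exponent there suffices for the $j^{-3/2}$ bound.
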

	
	\begin{proof}[Proof of \Cref{intermediately lemma}]
		Recall the $\sigma$ field $\mathcal{H}_x$ defined in \cref{def of Hx}, we have
		\begin{align}
			Q_E\otimes P_{\overline{Y}}
			\left( B_x,\gamma_x = j
			\right)
			&=
			Q_E\otimes P_{\overline{Y}}
			\left( B_x,\gamma_j = j, \max_{j \leqslant s\leqslant x} R_s-R_j \leqslant 0
			\right)
			\nonumber\\
			&=
			Q_E\otimes P_{\overline{Y}}\left(
			Q_E\otimes P_{\overline{Y}} \left( B_x,\max_{ j \leqslant s\leqslant x} R_s-R_j \leqslant 0 \bigg| \mathcal{H}_j
			\right),\gamma_j=j \right)\nonumber
		\end{align}
		And due to the event $\left\{\max\limits_{ j \leqslant s\leqslant x} R_s-R_j \leqslant 0\right\}$ is independent with $\mathcal{H}_j$, we have
		\begin{equation*}
			Q_E\otimes P_{\overline{Y}}
			\left( B_x, \max_{ j \leqslant s\leqslant x} R_s-R_j \leqslant 0\bigg| \mathcal{H}_j
			\right)
			=
			Q_E\otimes P_{\overline{Y}}
			\left( B_x\bigg| \max_{ j \leqslant s\leqslant x} R_s-R_j \leqslant 0, \mathcal{H}_j
			\right)
			Q_E\otimes P_{\overline{Y}} \left(
			\max_{0 \leqslant s\leqslant x-k} R_s \leqslant 0
			\right)
		\end{equation*}
		And by \Cref{lemma1 for intermediately,lemma2 for intermediately} (\Cref{lemma2 for intermediately} ensures that the conditions of \Cref{lemma1 for intermediately} are satisfied), we know that there exists some $\mathcal{G}_j$ measurable random variable $V_j$, such that
		\begin{align}
			\lim\limits_{x\rightarrow\infty}Q_E\otimes P_{\overline{Y}}
			\left( B_x \bigg| \max_{ j \leqslant s\leqslant x} R_s-R_j \leqslant 0, \mathcal{H}_j
			\right)=V_j.
		\end{align}
		And by \cref{sqrtx probability decay}, we have
		\begin{equation*}
			\lim\limits_{x\rightarrow\infty}\sqrt{x} Q_E\otimes P_{\overline{Y}} \left(
			\max_{0 \leqslant s\leqslant x-j} R_s \leqslant 0
			\right) = C.
		\end{equation*}
		Therefore, we can take $M_j = C Q_E\otimes P_{\overline{Y}}
		\left(
		V_j, \gamma_j = j
		\right)<\infty$.
		
		Next, we observe that $B_x\leqslant B_j$. Therefore, we have
		\begin{align*}
			M_j\leqslant
			CQ_E\otimes P_{\overline{Y}}
			\left(B_j,\gamma_j=j\right)
		\end{align*}
		Take $p>3$ large enough such that \Cref{Modified Upper Estimate} holds, and thus $q\in (1,\frac{3}{2})$ such that $\frac{1}{p}+\frac{1}{q} = 1$. 
		By \Cref{Modified Upper Estimate} and H\"older inequality, due to the fact $B_j\leqslant1$, we have
		\begin{align}
			Q_E\otimes P_{\overline{Y}}
			\left(B_j,\gamma_j=j\right)
			&\leqslant
			Q_E\otimes P_{\overline{Y}}
			\left(B_je^{\frac{1}{p^2}R_j} e^{-\frac{1}{p^2}R_j},\gamma_j=j\right)
			\nonumber\\
			&\leqslant
			Q_E\otimes P_{\overline{Y}}
			\left(B_j^{\frac{1}{p}}e^{\frac{1}{p^2}R_j} e^{-\frac{1}{p^2}R_j},\gamma_j=j\right)
			\nonumber\\
			&\leqslant
			Q_E\otimes P_{\overline{Y}}
			\left(
			e^{\frac{1}{p}R_j}B_j\right)^{\frac{1}{p}}
			\cdot Q_E\otimes P_{\overline{Y}}
			\left(
			e^{-\frac{q}{p^2}R_j},\gamma_j=j \right)^{\frac{1}{q}} \nonumber\\
			&\leqslant
			C_p^{\frac{1}{p}} Q_E\otimes P_{\overline{Y}}
			\left(
			e^{-\frac{q}{p^2}R_j},\gamma_j=j \right)^{\frac{1}{q}}
		\end{align}
		Under the Assumptions of \Cref{Theorem of intermediately cases}, by \cref{mean of R,finite variance}, we know that the random walk $\{R_x;x\geqslant 0\}$ has mean 0 and finite variance. Hence, it is well know (also, one can see the Chapter 4.4 in \cite{B2} for rigorous arguments) that there exists some constant $C_5$, such that as $j\rightarrow\infty$, we have
		\begin{equation*}
			Q_E\otimes P_{\overline{Y}}
			\left(
			e^{-\frac{q}{p^2}R_j},\gamma_j=j	\right) \sim  C_5 j^{-\frac{3}{2}}
		\end{equation*}
		Therefore, combining with above inequalities and relationships, we know there exists some constant $C_1$, such that we have
		\begin{equation*}
			M_j\leqslant CQ_E\otimes P_{\overline{Y}} (B_j,\gamma_j = j)
			\leqslant
			C_1j^{-\frac{3}{2q}}.
		\end{equation*}
		So far, the proof is finished.
	\end{proof}
	
	With above lemmas, we are ready to prove \Cref{Theorem of intermediately cases}.
	
	\begin{proof}[Proof of \Cref{Theorem of intermediately cases}]
		First, we claim that for any $\epsilon>0$, there exists $N(\epsilon)$ large enough, such that for any $x>N$,
		\begin{equation*}
			\sum_{j=N+1}^{x} Q_E\otimes P_{\overline{Y}}
			\left(
			B_x,\gamma_x = j
			\right)\leqslant \epsilon ~ Q_E\otimes P_{\overline{Y}}
			\left(
			\max_{0\leqslant j\leqslant x} R_j \leqslant 0
			\right)
		\end{equation*}
		In fact, by \cref{60}, for any $\delta\in (0,1)$, we have
		\begin{align*}
			\sum_{j=N+1}^{x} Q_E\otimes P_{\overline{Y}}
			\left(
			B_x,\gamma_x = j
			\right)
			&\leqslant
			\sum_{j=N+1}^{\abs{(1-\delta)x}}
			Q_E\otimes P_{\overline{Y}} \left(
			B_j,\gamma_j = j
			\right) Q_E\otimes P_{\overline{Y}}
			\left(
			\max_{j\leqslant s\leqslant x}R_s-R_j \leqslant 0
			\right)
			\nonumber\\
			&~~~+
			\sum_{j=\abs{(1-\delta)x}}^{x} Q_E\otimes P_{\overline{Y}} \left(
			B_j,\gamma_j = j
			\right) Q_E\otimes P_{\overline{Y}}
			\left(
			\max_{j\leqslant s\leqslant x}R_s-R_j \leqslant 0
			\right)
			\\
			&\leqslant
			Q_E\otimes P_{\overline{Y}}
			\left(
			\max_{0\leqslant s\leqslant \delta x}R_s \leqslant 0
			\right)
			\left(\sum_{j=N+1}^{\infty} Q_E\otimes P_{\overline{Y}} \left(
			B_j,\gamma_j = j
			\right)\right)
			\\
			&~~~+ C_1(\abs{(1-\delta)x})^{-\frac{3}{2q}} \sum_{j=0}^{\abs{\delta x+1}}  Q_E\otimes P_{\overline{Y}}
			\left(
			\max_{0\leqslant s\leqslant j}R_s \leqslant 0
			\right)
			\\
			&\leqslant
			\frac{C_0}{\sqrt{\delta}}
			\left(\sum_{j=N+1}^{\infty} Q_E\otimes P_{\overline{Y}} \left(
			B_j,\gamma_j = j
			\right)\right) x^{-\frac{1}{2}}
			+ C_0C_1 x^{\frac{1}{2}-\frac{3}{2q}} \frac{\sqrt{\delta}}{1-\delta}
		\end{align*}
		where $C_0$ is defined in \cref{60}, and $C_1$ is the constant in \Cref{intermediately lemma} .Therefore, for $q\in (1,\frac{3}{2})$, by \Cref{intermediately lemma}, we have $\sum\limits_{j=0}^{\infty} Q_E\otimes P_{\overline{Y}} \left(
		B_j,\gamma_j = j
		\right)<\infty$, then by choose suitable $\delta$ and $N$, such that
		\begin{equation*}
			\frac{C_0}{\sqrt{\delta}}\left(\sum_{j=N+1}^{\infty} Q_E\otimes P_{\overline{Y}} \left(
			B_j,\gamma_j = j
			\right)\right) +C_0C_1 \frac{\sqrt{\delta}}{1-\delta}<\epsilon.
		\end{equation*}
		Thus, the claim is proved.
		
		Go back to \Cref{Theorem of intermediately cases}, for any $\epsilon>0$, for $x>N(\epsilon)$ large enough and for any $K>N(\epsilon)$, we have
		\begin{align}
			\sqrt{x}Q_E\otimes P_{\overline{Y}} (B_x)
			&=
			\sum_{j=0}^{x} \sqrt{x}Q_E\otimes P_{\overline{Y}} (B_x,\gamma_x =j)
			\nonumber\\
			&=
			\sum_{j=0}^{K} \sqrt{x}Q_E\otimes P_{\overline{Y}} (B_x,\gamma_x = k)
			+
			\sum_{j=K+1}^{x} \sqrt{x}Q_E\otimes P_{\overline{Y}} (B_x,\gamma_x = j)
			\nonumber\\
			&:=I_1(x,K)+I_2(x,K).\nonumber
		\end{align}
		About $I_1(x,K)$, by \Cref{intermediately lemma}, we have
		\begin{equation*}
			\lim\limits_{x\rightarrow\infty}I_1(x,K) = \sum_{k=0}^{K} M_k<\infty.
		\end{equation*}
		About $I_2(x,K)$, by previous claim, we have
		\begin{equation*}
			I_2(x,K)\leqslant I_2(x,N(\epsilon)+1) \leqslant \epsilon \sqrt{x} Q_E\otimes P_{\overline{Y}}
			\left(
			\max_{0\leqslant j\leqslant x} R_j \leqslant 0
			\right)\leqslant \epsilon C_0.
		\end{equation*}
		where $C_0$ is defined in \cref{60}.
		Then, we have
		\begin{equation*}
			\liminf\limits_{x\rightarrow\infty}\sqrt{x}Q_E\otimes P_{\overline{Y}} (B_x)
			\geqslant
			\lim\limits_{x\rightarrow\infty} I_1(x) = 
			\sum_{j=0}^{K} M_j
		\end{equation*}
		On the other side, we have
		\begin{equation*}
			\limsup\limits_{x\rightarrow\infty}\sqrt{x}Q_E\otimes P_{\overline{Y}} (B_x)
			\leqslant
			\lim\limits_{x\rightarrow\infty} I_1(x,K) +I_2(x,K) = 
			\sum_{j=0}^{K} M_j +\epsilon C_3.
		\end{equation*}
		And due to the arbitrariness of $K$ and $\epsilon$, we have
		\begin{equation*}
			\lim\limits_{x\rightarrow\infty}\sqrt{x}Q_E\otimes P_{\overline{Y}} (B_x)
			= 
			\sum_{j=0}^{\infty} M_j<\infty
		\end{equation*}
		Together with \Cref{Couping Lemma,Measure change Relationship 1}, \Cref{Theorem of intermediately cases} is proved.
	\end{proof}

	\subsection{Proof of \Cref{Theorem of weakly cases}}
	
	In this subsection, denote by $\lambda_\rho$ the solution of equation $\Lambda_e(\rho)+\rho\Lambda_s(\lambda) =0$.
	
	With the same arguments in \Cref{Section3.2}, there exists a probability measure  $Q^\rho_E$ on $\mathcal{P}^{\mathbb{Z}}$ such that for $k\leqslant 0$, and any suitable measurable function $f$, we have
	\begin{equation*}
		Q^{\rho}_E\left(
		f(F_{k+1},F_{k+2},\cdots,F_0,F_1,\cdots)
		\right) = P_E\left(
		f(F_{k+1},F_{k+2},\cdots,F_0,F_1,\cdots) e^{\rho\sum\limits_{i=k+1}^{0} X_i-k\Lambda_e(\rho)}
		\right).
	\end{equation*}
	Note that, about the two-side environment $\xi$, for any positive index $k$, the marginal distribution of $F_k$ under $Q_E$ is the same as the marginal distribution of $F_k$ under $P_E$, while for any non-positive index $k$, the marginal distribution of $F_k$ is give by a exponent measure change (i.e. for any measurable function $\varphi$, we have $Q^\rho_E(\varphi(F_k)) = P_E\left( \varphi(F_k) e^{\rho X_k-\Lambda_e(\rho)} \right)$). Moreover, no matter under $Q^\rho_E$ or $P_E$, all these random elements are independent with each other.
	Also, by straightforward differentiation, for any non-positive index $k\leqslant 0$, we have
	\begin{align*}
		Q^\rho_E\left(X_{k}\right) = \Lambda_e^{\prime}(\rho).
	\end{align*}
	
	Again, similar to \cref{eq61,eq84}, abuse use of notation, we also use $Q_E^\rho\otimes P_{\overline{Y}}$ to represent the probability measure such that for any positive and bounded function $f$, we have
	\begin{equation}
		\iiint f(\xi,\overline{Y},u) Q^\rho_E\otimes P_{\overline{Y}}(\d u,\d \xi,\d \overline{Y})
		=
		\iiint f(\xi,\overline{Y},u) P_{\xi,\overline{Y}}(\d u) Q^\rho_E(\d \xi) P_{\overline{Y}} (\d \overline{Y}),
	\end{equation}
	where $ P_{\xi,\overline{Y}}(\d u) $ is the transfer measure defined in \Cref{Section2.3}.
	
	Under $Q^\rho_E\otimes P_{\overline{Y}}$, 
	for any $k\leqslant0$, recall the definition of $J_k$ (also see \cref{def of Jk})
	\begin{equation}\label{def of Jk3}
		J_k := \sum_{i=k+1}^{0} X_i -k\Lambda_s(\lambda_\rho)+\lambda_\rho \overline{Y}_{k}= -S_k-k\Lambda_s(\lambda_\rho)+\lambda_\rho \overline{Y}_k.
	\end{equation}
	And for any $x\geqslant 1$, recall the definition of $R_x$ (also see \cref{def of R_x}),
	\begin{equation}\label{def of R_x3}
		R_x :=J_{-T_k}= \sum_{i=-T_x+1}^{0} X_i +T_x\Lambda_s(\lambda_\rho)-\lambda_\rho x
		=
		-S_{-T_x} +T_x\Lambda_s(\lambda_\rho)-\lambda_\rho x,
	\end{equation}
	with $R_0 = 0$, where $T_k$ is defined in \cref{def of Tx} and has the property in \cref{27}. 
	By identical computations as \Cref{Section3.2}, it follows that under the Assumptions of \Cref{Theorem of weakly cases}, $\{R_x,x\geqslant0\}$ is also a random walk under $Q^\rho_E\otimes P_{\overline{Y}}$, and the drift (or mean) of the random walk is 0.

	The relationship between $Q^\rho_E\otimes P_{\overline{Y}}$ and $P_E\otimes P_{\overline{Y}}$ is described by following lemma.
	\begin{lemma}\label{Measure change Relationship 2}
		For any integer $x\geqslant 1$, for any measurable function $\phi$, we have
		\begin{equation}
			e^{\rho\lambda_\rho x}P_E\otimes P_{\overline{Y}} (e^{\rho R_x}\phi(B_x,R_x)) = Q^\rho_E\otimes P_{\overline{Y}} ( \phi(B_x,R_x) ),
		\end{equation}
		where $B_x$ is defined in \cref{def of overlineA} from branching system $(\mathcal{T},V)$.
		Especially, we have
		\begin{equation}
			e^{\rho\lambda_\rho x} P\otimes P_\xi(M\geqslant x)
			=
			e^{\rho\lambda_\rho x} P_E\otimes P_{\overline{Y}}(e^{R_x}B_x)
			=
			Q^\rho_E\otimes P_{\overline{Y}} (e^{(1-\rho)R_x}B_x).
		\end{equation}
	\end{lemma}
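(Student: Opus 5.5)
We follow the proof of \Cref{Measure change Relationship 1} almost verbatim, with the exponent $1$ replaced by $\rho$ and $\lambda_1$ replaced by $\lambda_\rho$. The plan is to decompose on the value of $T_x$, identify the exponential weight appearing on $\{T_x=k\}$ with the Radon--Nikodym density of $Q^\rho_E$ with respect to $P_E$ restricted to the environment indices $\{-k+1,\dots,0\}$, and then apply Fubini.

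Fix $x\geqslant 1$ and $k\geqslant 1$. On $\{T_x=k\}$, by \cref{def of R_x3},
\[
\rho\lambda_\rho x+\rho R_x=\rho\sum_{i=-k+1}^{0}X_i+\rho k\Lambda_s(\lambda_\rho)=\rho\sum_{i=-k+1}^{0}X_i-k\Lambda_e(\rho),
\]
where the last step uses the defining relation $\Lambda_e(\rho)+\rho\Lambda_s(\lambda_\rho)=0$. This is exactly the density $e^{\rho\sum_{i=-k+1}^0X_i-k\Lambda_e(\rho)}$ that defines $Q^\rho_E$ on the coordinates $F_{-k+1},\dots,F_0,F_1,\dots$, with the indices $\leqslant -k$ unaffected.

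The key structural point is that under the product structure $P_{\xi,\overline{Y}}(\d u)P_E(\d\xi)P_{\overline{Y}}(\d\overline{Y})$, the trajectory $\overline{Y}$ (and hence $\ind{T_x=k}$) is independent of $\xi$. Also, on $\{T_x=k\}$ the variables $B_x$ and $R_x$ depend on $\xi$ only through $\Gamma_{-k}(\xi)$, that is, through $F_j$ with $j>-k$ (see \Cref{mean of Phi} and \cref{def of overlineA}). We integrate first over $u$ and $\overline{Y}$ to obtain a measurable function of $\xi$, and then use the defining identity of $Q^\rho_E$, which holds for all bounded measurable functionals of $(F_{-k+1},F_{-k+2},\dots)$. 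This is the only point requiring care, because the change of measure is valid only for functionals of coordinates with index $>-k$. The measurability property just noted guarantees this, so Fubini plus the monotone class extension already established for $Q^\rho_E$ gives
\[
e^{\rho\lambda_\rho x}P_E\otimes P_{\overline{Y}}\big(e^{\rho R_x}\phi(B_x,R_x);T_x=k\big)=Q^\rho_E\otimes P_{\overline{Y}}\big(\phi(B_x,R_x);T_x=k\big)
\]
for nonnegative $\phi$. Summing over $k\geqslant1$ by monotone convergence yields the first identity; for general $\phi$, apply this to $\phi^\pm$.

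For the ``especially'' part, \Cref{Couping Lemma} gives $\P(M\geqslant x)=P_E\otimes P_{\overline{Y}}(e^{R_x}B_x)$. We write $e^{R_x}B_x=e^{\rho R_x}\cdot e^{(1-\rho)R_x}B_x$ and apply the first identity with $\phi(b,r)=e^{(1-\rho)r}b$. This $\phi$ is nonnegative and measurable, so no integrability issue arises in the monotone-convergence argument. The result is $Q^\rho_E\otimes P_{\overline{Y}}(e^{(1-\rho)R_x}B_x)$, which completes the proof.
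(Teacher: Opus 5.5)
Your proposal is correct and follows the paper's route: the paper repeats the proof of the $\rho=1$ measure-change lemma. That means decomposing on $\{T_x=k\}$, using $\Lambda_e(\rho)+\rho\Lambda_s(\lambda_\rho)=0$ to identify $e^{\rho\lambda_\rho x+\rho R_x}$ with the density $e^{\rho\sum_{i=-k+1}^{0}X_i-k\Lambda_e(\rho)}$, applying Fubini, summing over $k$, and then taking $\phi(b,r)=e^{(1-\rho)r}b$. Your explicit check that on $\{T_x=k\}$ the integrand depends on $\xi$ only through $F_j$ with $j>-k$, which is what licenses replacing $P_E$ by $Q^\rho_E$, is a point the paper leaves implicit.
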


	\begin{proof}[Proof of \Cref{Measure change Relationship 2}]
		This proof differs nothing from that of the \Cref{Measure change Relationship 1}. And the results follow by verification in the same manner.
	\end{proof}
	
	Define $ \gamma_x = \inf\{0\leqslant k\leqslant x, R_k=\max\limits_{0\leqslant j\leqslant x} R_j\}$.
	
	\begin{lemma}\label{lemma of weakly 1}
		For any fixed integer $j\geqslant0$, we have
		\begin{equation}
			\liminf\limits_{x\rightarrow\infty} x^{\frac{3}{2}}
			Q_E^\rho\otimes P_{\overline{Y}}
			\left(e^{(1-\rho)R_x}B_x,\gamma_x=j\right) >0.
		\end{equation}
	\end{lemma}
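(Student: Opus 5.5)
Fix $j$ and decompose at time $j$ via the $\sigma$-field $\mathcal{H}_j$ of \cref{def of Hx}. Under $Q^\rho_E\otimes P_{\overline{Y}}$ the walk $\{R_x\}$ has zero drift and finite variance; the variance computation is identical to \cref{finite variance} with $\lambda_1$ replaced by $\lambda_\rho$. On $\{\gamma_x=j\}$ we have $\gamma_j=j$ and $R_x-R_j=:\widehat R_{x-j}\leqslant 0$ along the whole post-$j$ path. Here $\widehat R$ is an independent copy of the walk, built from the excursions $\zeta_{j+1},\zeta_{j+2},\dots$.

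\textbf{Step 1: conditioned limit for $B_x$.} $B_x$ is decreasing in $x$, so it suffices to handle the tail beyond $j$. I would first prove the analogue of \Cref{lemma1 for intermediately} for the post-$j$ walk under the Doob $h$-transform $(Q^\rho_E\otimes P_{\overline{Y}})^-$ for staying non-positive. The integrability \cref{additional assumption for weakly} plays the role of \cref{additional assumption for intermediately}. Repeating the Borel--Cantelli estimate on $\chi(\zeta_{x+1})$ and the decay $e^{R_x}=O(e^{-x^{1/2-\epsilon}})$ gives
\[
\sum_{k\leqslant -1}\eta_{k+1}e^{J_k}<\infty \quad\text{a.s.},
\]
hence $B_x\to B_\infty>0$ a.s.

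\textbf{Step 2: the local factor.} The statement carries the extra weight $e^{(1-\rho)R_x}$, which forces $R_x$ to stay $O(1)$ rather than merely non-positive. So I would use the conditioned local limit theorem for a mean-zero, finite-variance walk staying non-positive (Chapter 4 in \cite{B2}, in the same spirit as the asymptotic $\sim C_5 j^{-3/2}$ used in the proof of \Cref{intermediately lemma}):
\[
Q^\rho_E\otimes P_{\overline{Y}}\Big(\widehat R_n\in[-y-1,-y],\ \max_{s\leqslant n}\widehat R_s\leqslant 0\Big)\sim c\,U(y)\,n^{-3/2}
\]
for fixed $y\geqslant0$, with $U$ a renewal function. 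To combine this with Step 1, I would pass to the bridge/excursion-type measure, i.e.\ the walk conditioned on staying non-positive and ending in $[-y-1,-y]$. As $n\to\infty$ its initial segment converges to the $h$-transform $(Q^\rho_E\otimes P_{\overline{Y}})^-$, by an extension of \Cref{lemma2 for intermediately}.

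\textbf{Step 3: lower bound.} I would restrict to $\{\gamma_j=j,\ R_j\geqslant -1\}$, which has positive probability, and to $\{R_x-R_j\in[-1,0]\}$. On this event $e^{(1-\rho)R_x}\geqslant e^{-2(1-\rho)}$. Hence
\[
x^{3/2}\,Q^\rho_E\otimes P_{\overline{Y}}\Big(e^{(1-\rho)R_x}B_x,\gamma_x=j\Big)\geqslant e^{-2(1-\rho)}\,x^{3/2}\,Q^\rho_E\otimes P_{\overline{Y}}\Big(B_x;\ \gamma_j=j,\ R_j\geqslant-1,\ \widehat R_{x-j}\in[-1,0],\ \max\widehat R\leqslant0\Big).
\]
By the local estimate and the convergence of the conditioned law, the right side tends to a constant times
\[
Q^\rho_E\otimes P_{\overline{Y}}\Big(\mathbf 1\{\gamma_j=j,R_j\geqslant-1\}\,\widetilde V_j\Big),
\]
where $\widetilde V_j$ is the limiting conditional mean of $B_\infty$ given $\mathcal H_j$. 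It is strictly positive because $B_\infty>0$ almost surely. Fatou's lemma alone already gives the $\liminf$ bound, so no uniform integrability is needed, since $0<B_x\leqslant1$.

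\textbf{Main obstacle.} The hard step is Step 2: transferring the almost-sure convergence $B_x\to B_\infty>0$ from the meander measure $(\cdot)^-$ to the measure conditioned additionally on $\widehat R_{x-j}$ being near $0$ (the excursion). I would handle it by truncation. For fixed $m$, the first $m$ excursions after $j$ have, under the local conditioning, law converging to the $h$-transformed law; this follows from the Markov property and the local limit ratio $U(\cdot)$ asymptotics. Then $B_x\geqslant B^{(m)}\cdot(\text{tail factor})$, and the tail sum over excursions near the endpoint is controlled by showing the contributions $\chi(\zeta)e^{R}$ close to time $x$ stay bounded in probability (time-reversal of the excursion). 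A positive-probability lower bound on the tail factor suffices for the $\liminf$.
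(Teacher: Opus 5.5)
\textbf{Comparison with the paper.} Your architecture matches the paper's. On $\{\gamma_x=j\}=\{\gamma_j=j\}\cap\{\max_{j\le s\le x}R_s-R_j\le 0\}$, the beginning of the post-$j$ path is governed by the meander law $(Q^\rho_E\otimes P_{\overline{Y}})^-$, its time-reversed end by $(Q^\rho_E\otimes P_{\overline{Y}})^+$, and the factor $x^{-3/2}$ comes from a conditioned limit theorem. The paper proceeds as follows:
\begin{itemize}
\item It replaces $B_x$ by $A_x=(1+\sum_k e^{J_k}\eta_{k+1})^{-1}$ via Jensen's inequality, which removes the trees.
\item It bounds $A_x^{-1}\le 1+G_x+e^{R_x}\overline{K}_x$, with $G_x$ built from the first $\delta x$ excursions and $\overline{K}_x$ from the last $\delta x$. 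Since $\delta>\frac{1}{2}$, every excursion is covered.
\item It cites Lemma 7.3 of \cite{B2} (Theorem 2.7 of \cite{Afanasyev2012}) for the ratio limit $C_j>0$, and Proposition 2.1 of \cite{Afanasyev2012} for $D_j$.
\end{itemize}
Your restriction to $R_x-R_j\in[-1,0]$ trades the weight $e^{(1-\rho)R_x}$ for the local limit theorem of the walk conditioned to stay non-positive. That is a legitimate simplification for a lower bound. Two small remarks: $R_j\ge 0$ is automatic on $\{\gamma_j=j\}$, so your extra condition $R_j\ge -1$ is redundant. If $R_1$ is lattice, take a window of length at least the span; otherwise the restricted probability can vanish along a subsequence of $x$.

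\textbf{The gap: the transfer step.} This is exactly the step the paper outsources to \cite{Afanasyev2012}, and your sketch does not yet cover it.

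First, Step 3 misidentifies the limit. Under your conditioning $R_x=R_j+O(1)$, so the last excursions carry weights $e^{R_i}$ of order one. They add a non-degenerate random amount to $B_x^{-1}$; already the last excursion does, with conditional mean $\chi(\zeta_x)e^{R_{x-1}}$. The limit is therefore a two-sided functional, not the meander mean of $B_\infty$. Fatou's lemma is also not the right tool, since the conditioning measure varies with $x$. What you actually need is tightness of $B_x^{-1}$ under the conditioned laws, uniformly in $x$.

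Second, your truncation does not cover the bulk of the path. It controls a fixed number $m$ of excursions after $j$, via convergence of fixed-length segments to the $h$-transform, and, by reversal, a fixed number before $x$. The roughly $x-2m$ excursions in between are covered by neither argument.

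\textbf{How to close it.} You need a bound, uniform in $x$, on the Radon--Nikodym density of the first half of the conditioned path with respect to $(Q^\rho_E\otimes P_{\overline{Y}})^-$. You need the same for the reversed second half with respect to $(Q^\rho_E\otimes P_{\overline{Y}})^+$. Both follow from the uniform local estimate $\mathbb{P}_z(\widehat R_n\in[-y-1,-y],\ \max_{s\le n}\widehat R_s\le 0)\le C\,V(z)\,n^{-3/2}$ for the walk started at $z\le 0$, together with your Step 2 asymptotics at $z=0$. Then combine this with:
\begin{itemize}
\item the a.s.\ finiteness of $\sum_i\chi(\zeta_{i+1})e^{R_i}$ under $(\cdot)^-$, which is your Step 1;
\item the a.s.\ finiteness of the reversed sum under $(\cdot)^+$, which you must also prove from \cref{additional assumption for weakly}.
\end{itemize}
This gives tightness of $\sum_k\eta_{k+1}e^{J_k}$ under the conditioning. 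Tightness of $B_x^{-1}$ then follows by Markov's inequality given $(\xi,\overline{\mathbf{Y}})$, or directly via the paper's Jensen step. Alternatively, cite the Afanasyev--B\"oinghoff--Kersting--Vatutin result as the paper does.
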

	
	\begin{proof}[Proof of \Cref{lemma of weakly 1}]
		Recall the $\sigma$ field $\mathcal{H}_x,\mathcal{H}_\infty$ defined in \cref{def of Hx}.
		According to the construction about $B_x$, by Jensen's inequality and \Cref{mean of Phi}, we have
		\begin{equation}
			Q_E^{\rho}\otimes P_{\overline{Y}}
			[B_x|\mathcal{H}_\infty]
			\geqslant
			\left(
			1+\sum_{k=-T_x}^{-1} e^{J_k}\eta_{k+1}
			\right)^{-1}=:A_x,
		\end{equation}
		And $R_x$ is a $\mathcal{H}_x$ (also $\mathcal{H}_\infty$) measurable random variable. Thus, it is enough to prove that
		\begin{equation}
			\liminf\limits_{x\rightarrow\infty} x^{\frac{3}{2}}
			Q_E^\rho\otimes P_{\overline{Y}}
			\left(e^{(1-\rho)R_x}A_x,\gamma_x=j\right) >0.
		\end{equation}
		
		Recall $\zeta_{j+1}$ (defined in \cref{def of zeta}), we have
		\begin{align}
			A_x^{-1}
			&= 1+\sum_{k=-T_x}^{-1} e^{J_k}\eta_{k+1}
			\nonumber\\
			&= 1+\sum_{j=0}^{x-1} e^{\sum\limits_{i=-T_j+1}^0 X_i +T_x\Lambda_s(\lambda_1)+\lambda_1 \overline{Y}_{-T_j}} 
			\left(\sum_{k=-T_{j+1}}^{-T_j-1} \eta_{k+1} e^{\sum\limits_{i=k+1}^{-T_x} X_i - (k+T_j)\Lambda_s(\lambda_1)+\lambda(\overline{Y}_{k}+j-1)} 
			\right)
			\nonumber\\
			&=1+\sum_{j=0}^{x-1}
			\chi(\zeta_{j+1})e^{R_j} \nonumber
		\end{align}
		where $\chi(\zeta_{j}):= \sum\limits_{k=-T_{j+1}}^{-T_j-1} \eta_{k+1} e^{\sum\limits_{i=k+1}^{-T_j} X_i - (k+T_j)\Lambda_s(\lambda_1)}$ is a measurable function about excursion $\zeta_{j+1}$.
		Also, we have
		\begin{align}
			A_x^{-1}&= 1+\sum_{j=0}^{x-1}
			\chi(\zeta_{j+1})e^{R_j}\nonumber\\
			&=
			1+ e^{R_x} \sum_{j=0}^{x-1} \chi(\zeta_{j})e^{R_{j}-R_{j+1}}e^{R_{j+1}-R_x}
			\nonumber\\
			&=1+ e^{R_x} \sum_{j=0}^{x-1} \overline{\chi}(\zeta_{j}) e^{R_{j+1}-R_x}.
		\end{align}
		where $\overline{\chi}(\zeta_{x-j}):=\chi(\zeta_{x-j})e^{R_{j+1}-R_j}$ is a measurable function about $\zeta_{x-j}$.
		
		For any $\delta\in (0,1)$, define 
		\begin{equation*}
			G_x(\zeta_1,\zeta_2,\cdots,\zeta_{\delta x}):= 
			\sum_{j=0}^{\delta x-1} \chi(\zeta_{j+1}) e^{R_j}, \quad 
			\text{and}\quad  K_x(\zeta_1,\zeta_2,\cdots,\zeta_{\delta x})
			:= \sum_{j=1}^{\delta x} \overline{\chi}_j e^{-R_j}.
		\end{equation*}
		Repeating the proof of \Cref{lemma1 for intermediately}, with additional assumption \cref{additional assumption for weakly}, we have
		\begin{align*}
			\lim\limits_{x\rightarrow\infty}G_x = G_\infty<\infty,\quad (Q_E^{\rho}\otimes P_{\overline{Y}})^- \quad a.s..
			\nonumber\\
			\lim\limits_{x\rightarrow\infty}K_x = K_\infty<\infty,\quad (Q_E^{\rho}\otimes P_{\overline{Y}})^+ \quad a.s..
		\end{align*}
	
		Note that, for $\delta>\frac{1}{2}$, we have
		\begin{equation}
			A_x^{-1}
			\leqslant
			1+ G_x(\zeta_1,\zeta_1,\cdots,\zeta_{\delta x})
			+
			e^{R_x} K_x(\zeta_{x},\zeta_{x-1},\cdots,\zeta_{x+1-\delta x})=
			1+G_x+e^{R_x} \overline{K_x},
		\end{equation}
		Therefore, we have
		\begin{align}
			&Q_E^\rho\otimes P_{\overline{Y}}
			\left(e^{(1-\rho)R_x}A_x,\gamma_x=j\right)
			\nonumber\\
			\geqslant&
			Q_E^\rho\otimes P_{\overline{Y}}
			\left(e^{(1-\rho)R_x} \left(1+G_x+e^{R_n} \overline{K}_x \right)^{-1} ,\gamma_x=j\right)
			\nonumber\\
			= &Q_E^\rho\otimes P_{\overline{Y}}
			\left(e^{(1-\rho)R_x} \left(1+G_x+e^{R_n} \overline{K}_x\right)^{-1} ,\gamma_j=j,\max_{j\leqslant s\leqslant x} R_s-R_j\leqslant 0\right)\nonumber
		\end{align}
		By the Lemma 7.3 in \cite{B2} (i.e the Theorem 2.7 in \cite{Afanasyev2012}), we know that there exists some finite constant $C_j>0$, such that
		\begin{equation}
			\lim\limits_{x\rightarrow\infty} \dfrac{Q_E^\rho\otimes P_{\overline{Y}}
				\left(e^{(1-\rho)R_x} \left(A_x(m)+\overline{A}_x(m)\right)^{-1} ,\gamma_j=j,\max_{j\leqslant s\leqslant x} R_s-R_j\leqslant 0\right)}{Q_E^\rho\otimes P_{\overline{Y}}\left(
				e^{(1-\rho)R_x}, \gamma_j=j, \max_{j\leqslant s\leqslant x} R_s-R_j\leqslant 0
				\right) } =C_j.
		\end{equation}
		Also, by the Theorem 4.10 in \cite{B2} or the Proposition 2.1 in \cite{Afanasyev2012}, there exists some finite constant $D_j>0$, such that
		\begin{equation}
			\lim\limits_{x\rightarrow\infty}x^{\frac{3}{2}}Q_E^\rho\otimes P_{\overline{Y}}\left(
			e^{(1-\rho)R_x}, \gamma_j=j, \max_{j\leqslant s\leqslant x} R_s-R_j\leqslant 0
			\right) = D_j.
		\end{equation}
		Combining with above relationships, we have
		\begin{equation}
			\liminf\limits_{x\rightarrow\infty} x^{\frac{3}{2}}
			Q_E^\rho\otimes P_{\overline{Y}}
			\left(e^{(1-\rho)R_x}B_x,\gamma_x=j\right)\geqslant C_j D_j>0
		\end{equation}
		
		The proof is finished.
	\end{proof}

	About the upper bound of target probability, we have following lemma. Although it indeed provides control via an upper bound inequality, it is not sharp and thus insufficient to derive the exact polynomial decay rate of the annealed probability.
	\begin{lemma}\label{lemma of weakly 2}
		For any $\delta\in (0,\rho)$, there exists some constant $ C_\delta <\infty $, such that for any $x\geqslant1$, we have
		\begin{equation*}
			Q^\rho_E\otimes P_{\overline{Y}} \left(
			e^{(1-\delta)R_x} B_x
			\right) \leqslant C_\delta.
		\end{equation*}
	Then by the H\"older inequality, we have
	\begin{equation}
		Q^\rho_E\otimes P_{\overline{Y}} \left(
		e^{(1-\delta)R_x} B_x
		\right) \leqslant C_\delta^{\frac{1-\rho}{1-\delta}}<\infty.
	\end{equation}
\end{lemma}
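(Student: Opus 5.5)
The plan is to transfer the quantity back to the original annealed measure, as in the proof of \Cref{Modified Upper Estimate}, and there use the quenched identity $\tilde{P}_\xi(e^{R_x}B_x)=P_\xi(M\geqslant x)\leqslant 1$ together with a Cramér-type bound for the supremum of the tilted environment walk.

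\emph{Step 1 (change of measure).} Apply \Cref{Measure change Relationship 2} with $\phi(B_x,R_x)=e^{(1-\delta)R_x}B_x$. Then \Cref{Couping Lemma} gives
\[
Q^\rho_E\otimes P_{\overline{Y}}\left(e^{(1-\delta)R_x}B_x\right)
= e^{\rho\lambda_\rho x}\,P_E\otimes P_{\overline{Y}}\left(e^{(1+\rho-\delta)R_x}B_x\right)
= e^{\rho\lambda_\rho x}\,P_E\otimes \tilde{P}_\xi\left(e^{(1+\rho-\delta)R_x}B_x\right),
\]
where on the right $R_x=S_{\tau_x}+\tau_x\Lambda_s(\lambda_\rho)-\lambda_\rho x$ is the forward-time quantity of \eqref{quenched express} with $\lambda=\lambda_\rho$. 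It therefore suffices to show $P_E\otimes\tilde{P}_\xi\left(e^{(1+\rho-\delta)R_x}B_x\right)\leqslant C_\delta e^{-\rho\lambda_\rho x}$.

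\emph{Step 2 (environment supremum).} Define $U:=\sup_{n\geqslant0}\{S_n+n\Lambda_s(\lambda_\rho)\}$. Since $\Lambda_s(\lambda_\rho)=-\Lambda_e(\rho)/\rho$, the walk $S_n+n\Lambda_s(\lambda_\rho)$ has log-Laplace exponent $\theta\mapsto\Lambda_e(\theta)-\theta\Lambda_e(\rho)/\rho$, which vanishes at $\theta=\rho$. Hence $e^{\rho(S_n+n\Lambda_s(\lambda_\rho))}$ is a mean-one $P_E$-martingale, and Doob's maximal inequality gives $P_E(U\geqslant y)\leqslant e^{-\rho y}$ for all $y\geqslant0$. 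We also have $R_x\leqslant U-\lambda_\rho x$ pointwise for every spine trajectory.

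\emph{Step 3 (splitting on $U$).} Write $e^{(1+\rho-\delta)R_x}B_x=e^{(\rho-\delta)R_x}\,e^{R_x}B_x$.
\begin{itemize}
\item On $\{U<\lambda_\rho x\}$ we have $R_x\leqslant0$, so $e^{(1+\rho-\delta)R_x}B_x\leqslant e^{\rho R_x}$. Moreover $P_E\otimes\tilde{P}_\xi(e^{\rho R_x})=e^{-\rho\lambda_\rho x}$, which is \Cref{Measure change Relationship 2} with $\phi\equiv1$.
\item On $\{U\geqslant\lambda_\rho x\}$ we condition on $\xi$ and use $\tilde{P}_\xi(e^{R_x}B_x)=P_\xi(M\geqslant x)\leqslant1$. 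This gives the quenched bound $\tilde{P}_\xi\left(e^{(1+\rho-\delta)R_x}B_x\right)\leqslant e^{(\rho-\delta)(U-\lambda_\rho x)}$.
\end{itemize}
For the second case, integrating against the tail of $U$ gives
\[
P_E\left(e^{(\rho-\delta)(U-\lambda_\rho x)};U\geqslant\lambda_\rho x\right)\leqslant e^{-\rho\lambda_\rho x}\Bigl(1+\int_0^\infty (\rho-\delta)e^{(\rho-\delta)y}e^{-\rho y}\,\d y\Bigr)=\Bigl(1+\tfrac{\rho-\delta}{\delta}\Bigr)e^{-\rho\lambda_\rho x}.
\]
The integral is finite precisely because $\delta>0$. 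Adding the two pieces yields $C_\delta=2+(\rho-\delta)/\delta$.

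\emph{Step 4 (the displayed consequence).} Since $0<B_x\leqslant1$ and $\frac{1-\rho}{1-\delta}\in(0,1)$, we have $e^{(1-\rho)R_x}B_x\leqslant\left(e^{(1-\delta)R_x}B_x\right)^{\frac{1-\rho}{1-\delta}}$. Applying Jensen's (or Hölder's) inequality to the concave power then gives the bound $C_\delta^{\frac{1-\rho}{1-\delta}}$.

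The main point requiring care is Step 2 together with the quenched bound in Step 3. One has to make sure the Cramér bound is taken for the environment walk in the forward representation, where $\tau_x$ is independent of $\xi$ and $\tilde{P}_\xi(e^{R_x}B_x)\leqslant1$ holds for each fixed $\xi$. One should not work in the time-reversed coupling picture, where the environment has been tilted. Getting the exponent right is what forces $\delta>0$: the tail of $U$ decays like $e^{-\rho y}$, while the surplus factor grows only like $e^{(\rho-\delta)y}$.
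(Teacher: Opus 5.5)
Your proof is correct. It shares the paper's skeleton: the change of measure to $P_E\otimes\tilde{P}_\xi$, the split on $U=\sup_{n\geqslant0}\{S_n+n\Lambda_s(\lambda_\rho)\}$, and the quenched bound $\tilde{P}_\xi(e^{R_x}B_x)=P_\xi(M\geqslant x)\leqslant 1$ on $\{U\geqslant\lambda_\rho x\}$. It closes differently on the event $\{U<\lambda_\rho x\}$.

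On that event the paper keeps $B_x$ and bounds $e^{(1+\rho-\delta)R_x}B_x\leqslant e^{R_x}B_x$. After multiplying by $e^{\rho\lambda_\rho x}$, this reproduces the unknown $b:=Q^\rho_E\otimes P_{\overline{Y}}(e^{(1-\rho)R_x}B_x)$. So the paper only obtains $a\leqslant b+C$ for $a:=Q^\rho_E\otimes P_{\overline{Y}}(e^{(1-\delta)R_x}B_x)$. It then closes the loop with the convexity bound $a\geqslant b^{(1-\delta)/(1-\rho)}$, which forces $b$, and then $a$, to be bounded.

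You instead discard $B_x$ and use $e^{(1+\rho-\delta)R_x}B_x\leqslant e^{\rho R_x}$, valid since $R_x\leqslant0$ and $\delta<1$. Its annealed mean is exactly $e^{-\rho\lambda_\rho x}$ by $\Lambda_e(\rho)+\rho\Lambda_s(\lambda_\rho)=0$. This has three consequences:
\begin{itemize}
\item it removes the bootstrap;
\item it gives the explicit constant $C_\delta=1+\rho/\delta$;
\item Jensen is needed only to pass from exponent $1-\delta$ to $1-\rho$, matching the order in which the statement is phrased. You rightly read the second display as a bound on $e^{(1-\rho)R_x}B_x$.
\end{itemize}

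Likewise, Ville's inequality $P_E(U\geqslant y)\leqslant e^{-\rho y}$ is uniform in $y$, so your estimate holds for every $x\geqslant1$. The paper's appeal to the Cramér asymptotic $P(U\geqslant y)\sim e^{-\rho y}$ only controls large $x$ directly.

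Keeping $B_x$ on the good event, as the paper does, is pointwise sharper but buys nothing for an $O(1)$ bound. Your route is the more economical of the two.
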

	
	\begin{proof}[Proof of \Cref{lemma of weakly 2}]
	By \Cref{Measure change Relationship 2}, we have
	\begin{align}
		Q^\rho_E\otimes P_{\overline{Y}} \left(
		e^{(1-\delta)R_x} B_x
		\right) &= 
		e^{\rho\lambda_\rho x}P_E\otimes P_{\overline{Y}} \left(
		e^{(1+\rho-\delta) R_x} B_x
		\right)
		\nonumber\\
		&= e^{\rho\lambda_\rho x}
		P\otimes P_{\xi} \left( e^{(1+\rho-\delta) R_x} B_x \right).
	\end{align}
	
	Under $P\otimes \tilde{P}_\xi$, 
	for $P~a.s.$ environment $\xi$, 
	define $U:=\sup\{S_n+n\Lambda_s(\lambda_\rho);n\geqslant 0\}$.
	Note that, $\{S_n-n\Lambda_s(\lambda_\rho);n\geqslant 0\}$ is a random walk with log-Laplace exponent $ \overline{\Lambda}(\theta)=\Lambda_e(\theta)+\theta \Lambda_s(\lambda_\rho) $, which has a positive zero point at $\theta=\rho$ (under the Assumption of \Cref{Theorem of weakly cases}).
	Then, $P$ almost surely, we have $U<\infty$. 
	Moreover, we have
	\begin{equation*}
		P(U\geqslant y) \sim e^{-\rho y}.
	\end{equation*}
	Note that, if $\xi\in \{U <\lambda_\rho x\}$, we have $R_x\leqslant 0,~ \tilde{P}_\xi~a.s.$. So, we have
	\begin{align*}
		\tilde{P}_\xi(e^{(1+\rho-\delta)R_x}B_x)
		\leqslant\tilde{P}_\xi(e^{R_x}B_x),
	\end{align*}
	which implies that
	\begin{equation*}
		P\otimes \tilde{P}_\xi (e^{(1+\rho-\delta)R_x}B_x;U<\lambda x)\leqslant
		P\otimes \tilde{P}_\xi (e^{R_x}B_x).
	\end{equation*}
	
	And if $\xi\in \{U\geqslant \lambda_\rho x\}$, and the inequality $R_x\leqslant U-\lambda_1 x$ holds. Then we have
	\begin{equation*}
		\tilde{P}_\xi(e^{(1+\rho-\delta)R_x}B_x)
		\leqslant e^{(\rho-\delta)(U-\lambda_1x)}
		\tilde{P}_\xi(e^{R_x}B_x)\leqslant e^{\frac{1}{p}(U-\lambda_1x)},
	\end{equation*}
	which implies that
	\begin{equation*}
		P\otimes \tilde{P}_\xi (e^{(1+\frac{1}{p})R_x}B_x;U\geqslant\lambda x)\leqslant
		P\left( e^{\frac{1}{p}(U-\lambda_1 x)};U\geqslant \lambda_1 x \right)
	\end{equation*}
	And according to the tail probability of $U$, it is not hard to verify that as $x\rightarrow\infty$, we have
	\begin{equation*}
		P\left( e^{(\rho-\delta)(U-\lambda_1 x)}|U\geqslant \lambda_1 x \right) \sim \int_{0}^{\infty} e^{(\rho-\delta)y} e^{-\rho y} \d x <\infty.
	\end{equation*}
	Combining above relationships, there exists some finite constant $C$, such that
	\begin{equation*}
		Q_E^{\rho}\otimes P_{\overline{Y}} \left(
		e^{(1-\delta)R_x} B_x
		\right)\leqslant Q_E^{\rho} \left(
		e^{(1-\rho)R_x} B_x
		\right) +C.
	\end{equation*}
	And by H\"older inequality, we have
	\begin{equation*}
		Q_E^{\rho}\otimes P_{\overline{Y}} \left(
		e^{(1-\delta)R_x} B_x
		\right)
		\geqslant
		Q_E^{\rho}\otimes P_{\overline{Y}} \left(
		e^{(1-\rho)R_x} B_x
		\right) ^{\frac{1-\delta}{1-\rho}}
	\end{equation*}
    As long as $\delta \in (0,\rho)$, we know that the inequality $x^{\frac{1-\delta}{1-\rho}}\leqslant x+C$ implies that there exists some finite constant $C_\delta$ such that $x\leqslant C_\delta$. Therefore, we have
    \begin{equation*}
    	Q_E^{\rho}\otimes P_{\overline{Y}} \left(
    	e^{(1-\delta)R_x} B_x
    	\right) \leqslant C_\delta<\infty.
    \end{equation*}
    
    So far, the proof is finished.
    \end{proof}
	
	\begin{proof}[Proof of \Cref{Theorem of weakly cases}]
		With \Cref{Measure change Relationship 2,lemma of weakly 1,lemma of weakly 2}, \Cref{Theorem of weakly cases} is proved.
	\end{proof}
	
	\begin{remark}\label{R final}
		As mentioned earlier, for Class \Romannum{3} subcritical BRWRE, we fail to derive the exact polynomial decay rate due to the absence of crucial upper bound estimates. Specifically, we elaborate on this issue by drawing on relevant results concerning branching processes in random environments (BPRE). In the calculation of the extinction probability for BPRE, a robust upper bound is established such that $ P_\xi(Z_n>0)\leqslant \min \{e^{S_k};0\leqslant k\leqslant n\} = e^{L_n} $, where $L_n:=\min\{S_k;0\leqslant k\leqslant n\}$ is the minimal of the associated random walk $\{S_n,x\geqslant 0\}$, and this estimate is sharp, which is sufficient to accomplish the required upper bound control, see \cite{Geiger2003,B2} and references therein for detail discussions.
		
		However, for subcritical branching random walk in random environment, borrow some notations in \cref{Section2}, by the fact $\{M\geqslant x\}\subset \{M\geqslant y\}$ for any $y\leqslant x$, we can deduce that
		\[
		P_\xi\left(M\geqslant x\right) \leqslant 
		\min \{
		\tilde{P}_\xi \left(e^{R_y}\right),0\leqslant y\leqslant x
		\}.
		\]
		While, under the measure \(\tilde{P}_\xi\), we technically introduce additional information about the spine, which prevents the minimum operation in the above formula from being commuted into the expectation without any loss. And this is the intrinsic reason why we can't obtain the exactly polynomial decay.
	\end{remark}

	\bibliographystyle{plain}
	\bibliography{subcritical.bib}

@article {Geiger2003,
	AUTHOR = {Geiger, J. and Kersting, G. and Vatutin, V. A.},
	TITLE = {Limit theorems for subcritical branching processes in random
	environment},
	JOURNAL = {Ann. Inst. H. Poincar\'e{} Probab. Statist.},
	FJOURNAL = {Annales de l'Institut Henri Poincar\'e. Probabilit\'es et
	Statistiques},
	VOLUME = {39},
	YEAR = {2003},
	NUMBER = {4},
	PAGES = {593--620},
	ISSN = {0246-0203},
	MRCLASS = {60J80 (60F05 60K37)},
	MRNUMBER = {1983172},
	MRREVIEWER = {Utkir\ Rozikov},
	DOI = {10.1016/S0246-0203(02)00020-1},
	URL = {https://doi.org/10.1016/S0246-0203(02)00020-1},
}

@article {Afanasyev2012,
	AUTHOR = {Afanasyev, V. I. and B\"oinghoff, C. and Kersting, G. and
	Vatutin, V. A.},
	TITLE = {Limit theorems for weakly subcritical branching processes in
	random environment},
	JOURNAL = {J. Theoret. Probab.},
	FJOURNAL = {Journal of Theoretical Probability},
	VOLUME = {25},
	YEAR = {2012},
	NUMBER = {3},
	PAGES = {703--732},
	ISSN = {0894-9840,1572-9230},
	MRCLASS = {60J80 (60F17 60G50 60K37)},
	MRNUMBER = {2956209},
	MRREVIEWER = {Zhun\ Wei\ Lu},
	DOI = {10.1007/s10959-010-0331-6},
	URL = {https://doi.org/10.1007/s10959-010-0331-6},
}

@article {Tanaka1989,
	AUTHOR = {Tanaka, H.},
	TITLE = {Time reversal of random walks in one-dimension},
	JOURNAL = {Tokyo J. Math.},
	FJOURNAL = {Tokyo Journal of Mathematics},
	VOLUME = {12},
	YEAR = {1989},
	NUMBER = {1},
	PAGES = {159--174},
	ISSN = {0387-3870},
	MRCLASS = {60J15},
	MRNUMBER = {1001739},
	MRREVIEWER = {M.\ G.\ Shur},
	DOI = {10.3836/tjm/1270133555},
	URL = {https://doi.org/10.3836/tjm/1270133555},
}

@article {Fu2026,
	AUTHOR = {Fu, W. and Hong, W.},
	TITLE = {On the maximal displacement of critical branching random walk in random environment},
	YEAR = {2026+},
}

@article {Fu2025,
	AUTHOR = {Fu, W. and Hong, W.},
	TITLE = {The exact rate for the tail probability of the maximal
	displacement of subcritical branching random walk},
	JOURNAL = {Electron. J. Probab.},
	FJOURNAL = {Electronic Journal of Probability},
	VOLUME = {30},
	YEAR = {2025},
	PAGES = {--},
	ISSN = {1083-6489},
	MRCLASS = {60J80 (60F17 60G50 60G70)},
	MRNUMBER = {4973692},
	DOI = {10.1214/25-ejp1412},
	URL = {https://doi.org/10.1214/25-ejp1412},
}

@book{B2,
	AUTHOR = {Kersting, G. and  Vatutin, V.},
	TITLE = {Discrete Time Branching Processes in Random Environment},
	YEAR = {2017},
	PUBLISHER = {(J. Wiley Sons, Hoboken, NJ},
}

@article {Addario2009,
	AUTHOR = {Addario-Berry, L. and Reed, B.},
	TITLE = {Minima in branching random walks},
	JOURNAL = {Ann. Probab.},
	FJOURNAL = {The Annals of Probability},
	VOLUME = {37},
	YEAR = {2009},
	NUMBER = {3},
	PAGES = {1044--1079},
	ISSN = {0091-1798,2168-894X},
	MRCLASS = {60J80 (60G50)},
	MRNUMBER = {2537549},
	MRREVIEWER = {Sebastian\ M\"uller},
	DOI = {10.1214/08-AOP428},
	URL = {https://doi.org/10.1214/08-AOP428},
}

@misc{Huang2014,
	title={Branching random walk with a random environment in time}, 
	author={C. Huang and Q. Liu},
	year={2014},
	eprint={1407.7623},
	archivePrefix={arXiv},
	primaryClass={math.PR},
	url={https://arxiv.org/abs/1407.7623}, 
}

@article {Afanasyev2005,
	AUTHOR = {Afanasyev, V. I. and Geiger, J. and Kersting, G. and Vatutin,
	V. A.},
	TITLE = {Criticality for branching processes in random environment},
	JOURNAL = {Ann. Probab.},
	FJOURNAL = {The Annals of Probability},
	VOLUME = {33},
	YEAR = {2005},
	NUMBER = {2},
	PAGES = {645--673},
	ISSN = {0091-1798,2168-894X},
	MRCLASS = {60J80 (60F17 60G50 60K37)},
	MRNUMBER = {2123206},
	MRREVIEWER = {Luis\ G.\ Gorostiza},
	DOI = {10.1214/009117904000000928},
	URL = {https://doi.org/10.1214/009117904000000928},
}

@article {Mallein2019,
	AUTHOR = {Mallein, B. and Mi\l o\'s, P.},
	TITLE = {Maximal displacement of a supercritical branching random walk
	in a time-inhomogeneous random environment},
	JOURNAL = {Stochastic Process. Appl.},
	FJOURNAL = {Stochastic Processes and their Applications},
	VOLUME = {129},
	YEAR = {2019},
	NUMBER = {9},
	PAGES = {3239--3260},
	ISSN = {0304-4149,1879-209X},
	MRCLASS = {60J80 (60K37)},
	MRNUMBER = {3985561},
	MRREVIEWER = {David\ A.\ Croydon},
	DOI = {10.1016/j.spa.2018.09.008},
	URL = {https://doi.org/10.1016/j.spa.2018.09.008},
}

@book {LDP1998,
	AUTHOR = {Dembo, A. and Zeitouni, O.},
	TITLE = {Large deviations techniques and applications},
	SERIES = {Applications of Mathematics (New York)},
	VOLUME = {38},
	EDITION = {Second},
	PUBLISHER = {Springer-Verlag, New York},
	YEAR = {1998},
	PAGES = {xvi+396},
	ISBN = {0-387-98406-2},
	MRCLASS = {60F10},
	MRNUMBER = {1619036},
	DOI = {10.1007/978-1-4612-5320-4},
	URL = {https://doi.org/10.1007/978-1-4612-5320-4},
}

@article {Kesten1995,
	AUTHOR = {Kesten, H.},
	TITLE = {Branching random walk with a critical branching part},
	JOURNAL = {J. Theoret. Probab.},
	FJOURNAL = {Journal of Theoretical Probability},
	VOLUME = {8},
	YEAR = {1995},
	NUMBER = {4},
	PAGES = {921--962},
	ISSN = {0894-9840,1572-9230},
	MRCLASS = {60J15 (60J80)},
	MRNUMBER = {1353560},
	MRREVIEWER = {David\ J.\ Aldous},
	DOI = {10.1007/BF02410118},
	URL = {https://doi.org/10.1007/BF02410118},
}

@article {Biggins2004,
	AUTHOR = {Biggins, J. D. and Kyprianou, A. E.},
	TITLE = {Measure change in multitype branching},
	JOURNAL = {Adv. in Appl. Probab.},
	FJOURNAL = {Advances in Applied Probability},
	VOLUME = {36},
	YEAR = {2004},
	NUMBER = {2},
	PAGES = {544--581},
	ISSN = {0001-8678,1475-6064},
	MRCLASS = {60J80 (60G42)},
	MRNUMBER = {2058149},
	MRREVIEWER = {Owen\ Jones},
	DOI = {10.1239/aap/1086957585},
	URL = {https://doi.org/10.1239/aap/1086957585},
}

@article {Fleischman1979Maximun,
	AUTHOR = { Sawyer, S. and  Fleischman, J.},
	TITLE = {Maximum geographic range of a mutant allele considered as a subtype of a Brownian branching random field},
	JOURNAL = {PNAS},
	FJOURNAL = {Proceedings of the National Academy of Sciences of the United States of America},
	VOLUME = {76},
	YEAR = {1979},
	PAGEs = {872-875},
	NUMBER = {2},
}

@article {Neuman2017,
	AUTHOR = {Neuman, E. and Zheng, X.},
	TITLE = {On the maximal displacement of subcritical branching random
	walks},
	JOURNAL = {Probab. Theory Related Fields},
	FJOURNAL = {Probability Theory and Related Fields},
	VOLUME = {167},
	YEAR = {2017},
	NUMBER = {3-4},
	PAGES = {1137--1164},
	ISSN = {0178-8051,1432-2064},
	MRCLASS = {60J80 (60G70)},
	MRNUMBER = {3627435},
	MRREVIEWER = {Edward\ C.\ Waymire},
	DOI = {10.1007/s00440-016-0702-8},
	URL = {https://doi.org/10.1007/s00440-016-0702-8},
}

@article {Lalley2015,
	AUTHOR = {Lalley, S. P. and Shao Y.},
	TITLE = {On the maximal displacement of critical branching random walk},
	JOURNAL = {Probab. Theory Related Fields},
	FJOURNAL = {Probability Theory and Related Fields},
	VOLUME = {162},
	YEAR = {2015},
	NUMBER = {1-2},
	PAGES = {71--96},
	ISSN = {0178-8051,1432-2064},
	MRCLASS = {60J80},
	MRNUMBER = {3350041},
	MRREVIEWER = {Manuel\ Molina},
	DOI = {10.1007/s00440-014-0566-8},
	URL = {https://doi.org/10.1007/s00440-014-0566-8},
}

@article {Hu2009,
	AUTHOR = {Hu, Y. and Shi, Z.},
	TITLE = {Minimal position and critical martingale convergence in
	branching random walks, and directed polymers on disordered
	trees},
	JOURNAL = {Ann. Probab.},
	FJOURNAL = {The Annals of Probability},
	VOLUME = {37},
	YEAR = {2009},
	NUMBER = {2},
	PAGES = {742--789},
	ISSN = {0091-1798,2168-894X},
	MRCLASS = {60J80 (60G42)},
	MRNUMBER = {2510023},
	MRREVIEWER = {David\ A.\ Croydon},
	DOI = {10.1214/08-AOP419},
	URL = {https://doi.org/10.1214/08-AOP419},
}

@article {Bramson2009,
	AUTHOR = {Bramson, M. and Zeitouni, O.},
	TITLE = {Tightness for a family of recursion equations},
	JOURNAL = {Ann. Probab.},
	FJOURNAL = {The Annals of Probability},
	VOLUME = {37},
	YEAR = {2009},
	NUMBER = {2},
	PAGES = {615--653},
	ISSN = {0091-1798,2168-894X},
	MRCLASS = {60J80 (60G50)},
	MRNUMBER = {2510018},
	MRREVIEWER = {Quansheng\ Liu},
	DOI = {10.1214/08-AOP414},
	URL = {https://doi.org/10.1214/08-AOP414},
}

@article {Bramson2016,
	AUTHOR = {Bramson, M. and Ding, J. and Zeitouni, O.},
	TITLE = {Convergence in law of the maximum of nonlattice branching
	random walk},
	JOURNAL = {Ann. Inst. Henri Poincar\'{e} Probab. Stat.},
	FJOURNAL = {Annales de l'Institut Henri Poincar\'{e} Probabilit\'{e}s et
	Statistiques},
	VOLUME = {52},
	YEAR = {2016},
	NUMBER = {4},
	PAGES = {1897--1924},
	ISSN = {0246-0203,1778-7017},
	MRCLASS = {60G70 (60J80)},
	MRNUMBER = {3573300},
	MRREVIEWER = {Anja\ K.\ Sturm},
	DOI = {10.1214/15-AIHP703},
	URL = {https://doi.org/10.1214/15-AIHP703},
}

@article {Bachmann,
	AUTHOR = {Bachmann, M.},
	TITLE = {Limit theorems for the minimal position in a branching random
	walk with independent logconcave displacements},
	JOURNAL = {Adv. in Appl. Probab.},
	FJOURNAL = {Advances in Applied Probability},
	VOLUME = {32},
	YEAR = {2000},
	NUMBER = {1},
	PAGES = {159--176},
	ISSN = {0001-8678,1475-6064},
	MRCLASS = {60J80 (60F05 60G70)},
	MRNUMBER = {1765165},
	MRREVIEWER = {Karl\ Grill},
	DOI = {10.1239/aap/1013540028},
	URL = {https://doi.org/10.1239/aap/1013540028},
}

@article {Aidekon2013,
	AUTHOR = {A\"{\i}d\'{e}kon, E.},
	TITLE = {Convergence in law of the minimum of a branching random walk},
	JOURNAL = {Ann. Probab.},
	FJOURNAL = {The Annals of Probability},
	VOLUME = {41},
	YEAR = {2013},
	NUMBER = {3A},
	PAGES = {1362--1426},
	ISSN = {0091-1798,2168-894X},
	MRCLASS = {60J80 (60F05)},
	MRNUMBER = {3098680},
	MRREVIEWER = {P\'{e}ter\ Kevei},
	DOI = {10.1214/12-AOP750},
	URL = {https://doi.org/10.1214/12-AOP750},
}

@article {Bramson,
	AUTHOR = {Bramson, M. D.},
	TITLE = {Minimal displacement of branching random walk},
	JOURNAL = {Z. Wahrsch. Verw. Gebiete},
	FJOURNAL = {Zeitschrift f\"{u}r Wahrscheinlichkeitstheorie und Verwandte
	Gebiete},
	VOLUME = {45},
	YEAR = {1978},
	NUMBER = {2},
	PAGES = {89--108},
	ISSN = {0044-3719},
	MRCLASS = {60J80},
	MRNUMBER = {510529},
	MRREVIEWER = {J.\ D.\ Biggins},
	DOI = {10.1007/BF00715186},
	URL = {https://doi.org/10.1007/BF00715186},
}

@article {Biggins1976,
	AUTHOR = {Biggins, J. D.},
	TITLE = {The first- and last-birth problems for a multitype
	age-dependent branching process},
	JOURNAL = {Advances in Appl. Probability},
	FJOURNAL = {Advances in Applied Probability},
	VOLUME = {8},
	YEAR = {1976},
	NUMBER = {3},
	PAGES = {446--459},
	ISSN = {0001-8678,1475-6064},
	MRCLASS = {60J80},
	MRNUMBER = {420890},
	MRREVIEWER = {Norman\ Kaplan},
	DOI = {10.2307/1426138},
	URL = {https://doi.org/10.2307/1426138},
}

@article {Kingman,
	AUTHOR = {Kingman, J. F. C.},
	TITLE = {The first birth problem for an age-dependent branching
	process},
	JOURNAL = {Ann. Probability},
	FJOURNAL = {The Annals of Probability},
	VOLUME = {3},
	YEAR = {1975},
	NUMBER = {5},
	PAGES = {790--801},
	ISSN = {0091-1798},
	MRCLASS = {60J80},
	MRNUMBER = {400438},
	MRREVIEWER = {H.\ Kesten},
	DOI = {10.1214/aop/1176996266},
	URL = {https://doi.org/10.1214/aop/1176996266},
}

@article {Hammersley,
	AUTHOR = {Hammersley, J. M.},
	TITLE = {Postulates for subadditive processes},
	JOURNAL = {Ann. Probability},
	FJOURNAL = {The Annals of Probability},
	VOLUME = {2},
	YEAR = {1974},
	PAGES = {652--680},
	ISSN = {0091-1798},
	MRCLASS = {60G10 (60F05 60J80)},
	MRNUMBER = {370721},
	MRREVIEWER = {H.\ Kesten},
	DOI = {10.1214/aop/1176996611},
	URL = {https://doi.org/10.1214/aop/1176996611},
}

\end{document}